\newcommand{\N}{\mathbb{N}} % \N   = natural numbers
\newcommand{\R}{\mathbb{R}} % \R   = real numbers
\newcommand{\F}{\mathbb{F}} % \F   = filtration F
\newcommand{\FF}{\mathcal{F}} % \FF  = sigma algebra F
\newcommand{\GG}{\mathcal{G}} % \GG = sigma algebra G
\newcommand{\Prob}{\mathbb{P}} % \Prob   = probability measure
\newcommand{\ExpVal}{\mathbb{E}} % \E   = expected value
\newcommand{\PPtwo}{\mathcal{P}_2(\R^d)} % \PP2   = space of probability measures with finite second moment
\newcommand{\PPtwoA}{\mathcal{P}_2(\R^d \times A)} % \PP2A   = space of probability measures on \R^d \times A with finite second moment
\newcommand{\Adm}{\mathcal{A}} % \Adm= the space of admissible controls
\newcommand{\Cont}{\mathcal{C}} %\Cont= the space of continuous functions
\newcommand{\LL}{\mathcal{L}} %\LL=Law of a random vector 
\newcommand{\NewOmega}{\bar{\Omega}} %The set of \omega such that the corresponding trajectory is continuous
\newcommand{\Bor}{\mathcal{B}} %Borelian sets
\newcommand{\Prev}{\mathcal{P}(\F^0)} %The predictable \sigma-algebra w.r.t. \F^0
\newcommand{\CanSpace}{\Cont([0,+ \infty); \R^m)} %The canonical space of continuous functions on [0,+\infty)
\newcommand{\TestFcts}{\mathcal{TF}} %The set of test functions
\theoremstyle{plain}
\newtheorem{Teorema}{Theorem}[section]
\newtheorem{Lemma}[Teorema]{Lemma}
\newtheorem{Proposizione}[Teorema]{Proposition}
\newtheorem{Corollario}{Corollary}[Teorema]
\newtheorem{Definizione}[Teorema]{Definition}
\newtheorem*{Ipotesi}{Assumptions}
\theoremstyle{definition}
\newtheorem{Osservazione}[Teorema]{Remark}
\newtheorem{Esempio}[Teorema]{Example}
\numberwithin{equation}{section}
\title{Infinite Time Horizon Optimal Control of McKean-Vlasov SDEs}
\author{Silvia Rudà \thanks{Universit\`a degli Studi di Milano, Dipartimento di Matematica ``Federigo Enriques'', via Saldini 50, 20133 Milano, Italy;
		\newline
		e-mail: \texttt{silvia.ruda@unimi.it}}}
\date{}
\begin{document}
	
	\maketitle
	\begin{abstract}
		We present a theory of optimal control for McKean-Vlasov stochastic differential equations with infinite time horizon and discounted gain functional. We first establish the well-posedness of the state equation and of the associated control problem under suitable hypotheses for the coefficients. We then especially focus on the time invariance property of the value function \(V(t,\xi)\), stating that \(V\) is in fact independent of the initial time of the dynamics. This property can easily be derived if the class of controls can be restricted, forgetting the past of the Brownian noise, without modifying the value. This result is not trivial in a general  McKean-Vlasov case; in fact, we provide a counterexample where changing the class of controls worsens the value. We thus require appropriate continuity assumptions in order to prove the time invariance property. Furthermore, we show that the value function only depends on the initial random condition through its probability distribution. The function \(V\) can thus be rewritten as a map \(v\) on the Wasserstein space of order \(2\). After establishing a Dynamic Programming Principle for \(v\), we derive an elliptic Hamilton-Jacobi-Bellman equation, solved by \(v\) in the viscosity sense. Finally, using a finite horizon approximation of our optimal control problem, we prove that the aforementioned equation admits a unique viscosity solution under stronger assumptions.  
	\end{abstract}
	
	\noindent {\bf Keywords:}
		McKean-Vlasov SDEs, Dynamic Programming Principle, Hamilton-Jacobi-Bellman equation, Viscosity solutions, Infinite Time Horizon Optimal Control Problem
		
		\vspace{5mm}

	\noindent {\bf AMS 2010 subject classification:} 93E20, 60H10, 49L25.

	\section{Introduction}
	\label{SectionIntroduction}
	This paper studies a general optimal control problem for McKean-Vlasov stochastic differential equations with infinite time horizon and discounted gain functional through the approach based on the Dynamic Programming Principle and Hamilton-Jacobi-Bellman equation. Fix a \(m\)-dimensional Brownian motion \(B\) and let us consider a finite-dimensional equation over the unbounded interval \((t, \infty)\) of the form
	\begin{equation*}
		dX_s= b(X_s, \Prob_{X_s}, \alpha_s)ds + \sigma(X_s, \Prob_{X_s}, \alpha_s)dB_s,
	\end{equation*}
	whose drift and diffusion coefficients (valued in \(\R^d\) and \(\R^{d \times m}\), respectively) depend not only on the state process \(X\)  itself, but also on its probability distribution and on an input control process \(\alpha\). This equation is referred to as controlled McKean-Vlasov SDE with infinite time horizon. The choice of the control \(\alpha\) is based on an optimization criterion: the representative agent aims at maximizing a gain functional of the form
	\begin{equation*}
		J(t, \xi, \alpha)= \ExpVal \Bigg[ \int_{t}^{+ \infty} e^{-\beta(s-t)} f(X_s, \Prob_{X_s}, \alpha_s) ds \Bigg]
	\end{equation*}
	over a fixed set of feasible controls, where \(f\) is a suitable running gain, which may also depend on the distribution of the state process, and \(\beta\) is a discount rate. This paper especially focuses on the properties of the value of the problem and its characterization as the unique solution to a partial differential equation (Hamilton-Jacobi-Bellman equation) on the space of probability measures with finite second moment, \(\PPtwo\), endowed with the \(2\)-Wasserstein metric.
	
	The control of McKean-Vlasov equations (also referred to as mean-field control) over a finite time interval \([0,T]\), as well as the related theory of mean-field games, was initiated independently in \cite{LasryLions} and \cite{HuangCainesMalhamé}. In these works, optimization problems of large-scale populations are considered; the agents are assumed to be symmetric and weakly interacting through the empirical measure of the population, representing its average state distribution. As these models tend to become intractable when the size of the population is large, they can be approximated by their limit behaviour, where the coefficients of the dynamics of a single representative agent depend on the probability distribution of the state at present time: if the agents are competitive, the limit model will be a mean-field game, while if they are cooperative, or subject to a central planner aiming at maximizing the collective reward, the limit model will be a mean-field control problem. We also refer the reader to \cite{Lacker2017} and to \cite{CarmonaDelarue1} and \cite{CarmonaDelarue2} for comprehensive monographs on this subject.
	
	One of the classical possible approaches to optimal control problems is based on the Dynamic Programming Principle (DPP), leading to the so-called time consistency of the problem: if an optimal control for an initial couple of data \((t,x)\) exists, then its restriction to any later time interval \([s,T]\) must be optimal for the couple \((s,X_s^{t,x})\) (see \cite{YongTimeInconsistentProblems} and references therein for a comprehensive discussion on this notion). Mean-field optimal control problems were initially thought to be time-inconsistent, due to the possibly non-linear dependence of the gain or cost functions on the law of the state processes, see e.g.\ \cite{AnderssonDjehiche} and \cite{BuckdahnDjehicheLi}, where the problem was therefore approached by means of the Pontryagin Maximum Principle, as well as in \cite{LiStochasticMaximumPrinciple}. The DPP was subsequently proved with feedback controls in \cite{PhamWei1} and \cite{PhamWei2} thanks to an appropriate version of the flow property of solutions to McKean-Vlasov SDEs and of their laws. \cite{DjetePossamaiTan} especially focuses on weak and strong formulations of the problem with associated DPPs.  	
	We also mention \cite{BayraktarCossoPhamRandomizedFilteringforMKV}, where a randomized version of the DPP was presented. In this framework, the dynamics is in fact described by a couple of SDEs (a McKean-Vlasov SDE with random initial condition and a classical SDE with deterministic initial condition and coefficients depending on the law of the solution to the first equation), as proposed in \cite{BuckdahnLiPengRainer} in an uncontrolled setting; the value function which satisfies the DPP is associated with an optimization problem involving the solution to the second state equation and thus depends on time and on the couple \((x,\pi)\), representing the initial position and law of the system.
	
	Once the DPP is established, a suitable partial differential equation (Hamilton-Jacobi-Bellman, HJB, equation) can usually be derived from it; the goal of this approach is to give an explicit characterization of the value of the problem as the unique solution to this equation, so that we can explicitly compute it for any initial time and condition, provided that we can solve HJB equation. Notice that in the mean-field case, due to the presence of a measure-valued state variable, an appropriate definition of derivative with respect to probability measures needs to be provided: in this setting, where the state at time \(s\) is in fact given by a couple of a random variable \(X_s\) and its law \(\Prob_{X_s}\), it is natural to adopt the notion of Lions derivative, introduced in \cite{LionsCours} and detailed in  \cite{Cardaliaguet2013}.
	It is well known that, except for specific cases, the value function is not regular enough to solve a PDE in the classical sense: for this reason, the value function is usually proved to solve the HJB equation only in the viscosity sense, defined by Crandall and Lions \cite{CrandallLionsUsersGuide}. For the mean-field control problem with finite time horizon, this approach has been studied in \cite{PhamWei2}, where also a verification theorem is stated, and in \cite{PhamWei1}, where the dynamics is subject to a common noise. In both papers, the value function depends on the initial random condition only through its law (i.e.\ it is law invariant) due to the specific formulation, leading to a parabolic HJB equation in the Wasserstein space of order \(2\); nonetheless, the equation is lifted to a suitable \(L^2\) space, in order that the theory of viscosity solutions in Hilbert spaces (established in \cite{LionsViscositySols1}, \cite{LionsViscositySols2} and then in \cite{FabbrGozziSwiech}) can be exploited. Unfortunately, the relation between solutions to a PDE on the Wasserstein space and solutions to its lifted version is still unclear, see Remark 3.6 in \cite{CossoUniqueness}. As a consequence, the authors of \cite{CossoExistence} presented a more general formulation of the problem, without common noise but with open loop controls, allowing even dependence of the coefficients on the entire path of the state process, valued in an infinite dimensional Hilbert space. They showed that the value function is law invariant under mild conditions and then defined the parabolic HJB equation directly on the \(2\)-Wasserstein space, rewriting it in different equivalent forms and proving that it admits a viscosity solution, corresponding to the value function. The same authors analysed uniqueness of solutions to the HJB equation in the \(2\)-Wasserstein space in \cite{CossoUniqueness} by means of a \(N\)-player smooth approximation of the original problem, based on a propagation of chaos result, and Borwein-Preiss variational principle with a suitable gauge function. Their result was subsequently generalized and corrected in \cite{CheungTaiQiu} and \cite{BayraktarCheungTaiQiu} for a second-order parabolic PDE derived from a McKean-Vlasov control problem with common noise, showing that a stronger regularity assumption on the coefficients and a different definition of viscosity supersolution are needed in order to guarantee uniqueness, see Remark 2.5 and Remark 6.1 in \cite{CheungTaiQiu}. 
	Different approaches to prove uniqueness of viscosity solutions to a first or second order PDE on \(2\)-Wasserstein spaces have recently been proposed and this is still a very active area of research, as witnessed by \cite{SonerTorus},  \cite{BayraktarFourier}, \cite{DaudinSeeger} and references therein. Notice that all the aforementioned result concern a parabolic PDE associated with a finite horizon optimal control problem.
	
	For the infinite time horizon case, ergodic mean-field control has attracted considerable interest, especially in discrete time,  where connections with reinforcement learning are possible (see e.g.\ \cite{PhamWeiDiscreteTime}, \cite{BayraktarKara} and references therein), but also in continuous time (see e.g.\ \cite{CannerozziFerrari}, where a mean-field ergodic control problem is analysed via Lagrange multipliers). On the other hand, the mean-field game counterpart of discounted optimization problems has been studied in  \cite{CarmonaInfiniteHorizon} via probabilistic techniques and in \cite{CardaliaguetPorretta} by PDE techniques  on a torus. We also mention \cite{BaoTang}, where a mean-field ergodic control problem is approached by means of an approximation by exponentially weighted infinite horizontal optimal control problems, similar to the one considered in this paper. Nonetheless, as in \cite{PhamWei1} for the finite horizon case, the elliptic HJB equations associated with the discounted control problems are lifted on a suitable \(L^2\) space, so that the Hilbert setting helps providing a uniqueness result for viscosity solutions. Moreover, only closed loop controls in feedback form are considered. 
	
	\textbf{Our contribution:} To the best of our knowledge, this is the first paper addressing a discounted optimal control problem of the McKean-Vlasov type with infinite time horizon under mild assumptions (adapted from the ones in \cite{CossoExistence}) with an analytic approach, i.e.\ through the associated HJB equation on the \(2\)-Wasserstein space. One of the main contributions of this paper is the proof of the time invariance property, stating that the value function does not depend on the initial time of the dynamics. When open-loop controls are used, this is not a trivial result, as the controls depend on the entire Brownian noise from time \(0\). We will thus need to prove that it is possible to restrict the set of admissible controls, forgetting the past of the state process, without modifying the value. Nonetheless, we will provide a motivating example showing that, in a more general McKean-Vlasov setting, the value may worsen when we optimize over the smaller class of controls; this phenomenon is strictly related to the possibility to have a non-linear dependence of the gain function with respect to the law, which can never emerge in classical control problems. We thus conclude that suitable regularity hypotheses are needed to guarantee the  invariance of the value with respect to the class of controls. In order to prove this result, we will also provide an interesting disintegration formula of a McKean-Vlasov SDE with respect to the past of the Brownian trajectory, introduced to circumvent the impossibility to apply conditional independence directly on the probability distribution, which is a deterministic object. The time invariance property is subsequently established using ``shifting'' techniques applied to the controls and to the state process. Notice that the technical issues described above are due to our choice of open loop controls; the value functions of discounted problems considered in \cite{BaoTang}, defined over a class of feedback controls, are easily proved to be time invariant. \newline
	After extending the law invariance property of the value function presented in \cite{CossoExistence} to our infinite horizon setting, we derive an elliptic partial differential equation on the Wasserstein space of order \(2\) solved by our value function. In order to prove uniqueness of solutions to this PDE, we construct a sequence of finite horizon approximations of our control problem and we rely on the aforementioned uniqueness results for viscosity solutions to parabolic PDEs:  the stronger assumptions required for the parabolic result will thus be needed even for our theorem.
	
	The paper is organized as follows. In Section \ref{SectionNotationsAssumptionsDynamicsOptimalControlProblem}, we present the notation and the assumptions, together with the dynamics and the formulation of the optimal control problem, followed by the Dynamic Programming Principle.
	Section \ref{SectionTimeInvariance} is devoted to the Time Invariance Property. We first show that, for an alternative formulation of the control problem with a smaller class of admissible controls, the desired result holds. We then introduce an example where the two formulations lead to different values. Finally, we rigorously prove that the two value function are equal under suitable assumptions, relying on the aforementioned disintegration formula for the McKean-Vlasov dynamics with respect to the first part of the Brownian trajectory; we postpone the proof of all the technical results to \ref{AppendixEquaivalenctFormulation}. In Section \ref{SectionPropertiesVF}, we present the Law Invariance Property and other essential properties of the value function in this infinite time horizon framework, from which we deduce that it can in fact be redefined as a function on the \(2\)-Wasserstein space \(\PPtwo\). Finally, Section \ref{SectionHJB} is concerned with the elliptic Hamilton-Jacobi-Bellman equation on \(\PPtwo\) and the proof that the value function is its unique viscosity solution.
	
	\section{McKean-Vlasov SDEs and optimal control}
	\label{SectionNotationsAssumptionsDynamicsOptimalControlProblem}
	\subsection{Notation and assumptions} \label{Notations}
	\subsubsection*{Space state}
	Fix two integers \(d \geq 1\), representing the dimension of the space state, and \(m \geq 1\). We denote by \( <\cdot,\cdot> \) the standard inner product in \( \R^d\) and by \( | \cdot | \) the associated Euclidean norm of a vector. Given a matrix \( M \in \R^{d \times m} \), we define its Frobenius norm \( | M | := \sqrt{\sum_{i=1}^{d} \sum_{j=1}^{m} |M_{ij}|^2}= \sqrt{tr(MM^*)} \), where \( M^* \) is the transpose of \(M\) and \( tr(\cdot) \) is the trace of a square matrix.
	\subsubsection*{Spaces of probability measures}
	Denote by \( \mathcal{P}(\R^d) \) the space of probability measures on \( (\R^d, \Bor(\R^d))\), where \( \Bor(\R^d) \) is the Borel \( \sigma\)-algebra of \( \R^d \). For any probability measure \(\mu \in \mathcal{P}(\R^d) \), we denote by \( ||\mu||_2 \) the second moment of the measure, namely \[ ||\mu||_2 := \bigg( \int_{\R^d} |x|^2 \mu(dx) \bigg)^{\frac{1}{2}}. \] Define the space
	\( \PPtwo := \{\mu \in \mathcal{P}(\R^d): || \mu ||_2 < + \infty\}. \)
	We endow it with the 2-Wasserstein metric defined as 
	\begin{align*}
		W_2(\mu,\nu):= \inf \Biggl\{ &\int_{\R^d \times \R^d} |x-y|^2 \pi(dx,dy): \pi \in \mathcal{P}_2(\R^d \times \R^d) \\& \text{ such that } \pi(\R^d \times \cdot)= \nu, \, \pi(\cdot \times \R^d)= \mu \Biggr\}^{\frac{1}{2}}
	\end{align*}
	for every \(\mu, \nu \in \PPtwo \). The space \((\PPtwo, W_2)  \) is a Polish space (see e.g.\ Theorem 6.18 in \cite{Villani}).
	
	\subsection{Probabilistic setting} \label{SectionProbabilisticSetting}
	Let \((\Omega, \FF, \Prob ) \) be a complete probability space, on which an \(m\)-dimensional Brownian motion \(B=(B_t)_{t \geq 0}\) is defined. For every random variable \(\eta\) on \((\Omega,\F,\Prob)\), we denote by \(\Prob_{\eta}\) or \(\LL(\eta)\) the law of \(\eta\). Call  \( \F^B=(\FF_t^B)_{t \geq 0}\) the \(\Prob \)- completion of the filtration generated by \(B\). Fix a sub-\(\sigma \)-algebra \( \GG \) of \( \FF  \) satisfying the following conditions:
	
	\begin{Ipotesi}[\(\boldsymbol{H_{\GG}}\)]
		\begin{enumerate}
			\item[]
			\item[(i)] \( \GG \) is the \(\sigma\)-algebra generated by a random variable \(U\) with uniform distribution on \([0,1]\), \( U \sim Unif([0,1])\).
			\item[(ii)] \(U\) and \(B\) are independent, i.e.\ \( \FF_{\infty}^B = \bigcup \limits_{t \geq 0} \FF_t^B  \) and \( \GG \) are independent.
		\end{enumerate}
	\end{Ipotesi}
	We will require initial conditions to be \(\GG\)-measurable. Notice that condition (i) guarantees that for every \(\mu \in \PPtwo\) there exists a \(\GG \)-measurable, square-integrable random variable \(\xi: \Omega \rightarrow \R^d\) such that 
	\( \mu= \Prob_{\xi} \), as shown in Lemma 2.1 in \cite{CossoExistence}. Thus, the \(\sigma\)-algebra \(\GG \) is ``sufficiently rich''. Define the filtration \( \F=(\FF_t)_{t \geq 0} \) given by \( \FF_t = \sigma( \GG \cup  \FF_t^B). \)
	
	Let \(A\) be a compact subset of a Euclidean space \(\R^N\), representing the space of control actions. Define the following space of admissible controls:
	\[ \Adm= \{ \alpha: [0, +\infty) \times \Omega \rightarrow A: \alpha \text{ is } \F \text{- predictable} \}. \]
	
	\begin{Osservazione}
		For the formulation of the problem and the properties of the value function we could relax the hypotheses on \(A\), assuming that it is a Polish space. The assumption that it has a Euclidean structure will be needed in Section \ref{SectionHJB} for the definition of viscosity  supersolution, where it allows to define the Lions derivative on \(\PPtwoA\); on the other hand, this set is required to be compact and finite dimensional for the result of uniqueness of viscosity solutions on the Wasserstein space of probability measures (see e.g.\ Remark 2.1 in \cite{CossoUniqueness}).\qed
	\end{Osservazione}
	\begin{Osservazione}
		We could equivalently define \(\Adm\) as the class of \(A\)-valued progressively measurable processes. Indeed,
		combining Lemma 3.5.(ii) and Theorem 3.7 in \cite{ChungWilliams}, for any progressive process \(\alpha\) there exists a predictable process \(\tilde{\alpha}\) such that
		\( \tilde{\alpha}(s,\omega) = \alpha(s,\omega)  \)
		for \(ds \otimes d \Prob\)-a.e. \( (s,\omega),\) generating the same trajectories and the same gain as \(\alpha\) up to null sets. \qed
	\end{Osservazione}
	Notice that the predictable \(\sigma\)-algebra has an explicit class of generators, that we will use in the following representation of the admissible controls: this Lemma is inspired by Proposition 10 in \cite{Claisse} adapted to the present framework, where the filtration corresponds to an initial enlargement of a Brownian filtration and the probability space is not necessarily canonical.
	\begin{Lemma}
		\label{DoobProcessesLemma}
		Let \( \alpha \) be a \( \F \)-predictable process valued in \(A\). Then, there exists a measurable function
		\( \underline{\alpha}: [0, +\infty) \times \CanSpace \times [0,1] \rightarrow A \) 
		such that, for \(\Prob \text{-a.e. } \omega \in \Omega,\)
		\begin{equation}
			\label{ControlExplicitForm}
			\alpha_s(\omega) = \underline{\alpha}(s, B_{. \wedge s}(\omega), U(\omega)) \qquad \forall s \geq 0, 
		\end{equation}
		where \(B_{. \wedge s}(\omega)\) denotes the trajectory of the Brownian motion \(B\) associated with \(\omega\) and stopped at time \(s\).
	\end{Lemma}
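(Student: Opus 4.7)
The plan is to proceed by a functional monotone class argument on processes, tailored to the initially enlarged filtration $\FF_t = \sigma(\GG \cup \FF^B_t)$ with $\GG = \sigma(U)$.

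First I would verify the representation on the generators of the predictable $\sigma$-algebra $\mathcal{P}(\F)$, namely on sets of the form $\{0\} \times F_0$ with $F_0 \in \FF_0 = \GG$ and $(s,t] \times F$ with $0 \leq s < t$ and $F \in \FF_s$. Since $\FF_s^B$ coincides up to $\Prob$-null sets with the raw $\sigma$-algebra $\sigma(B_u : u \leq s) = \sigma(B_{\cdot \wedge s})$, and $\GG$ is independent of $\FF_\infty^B$ by $(\boldsymbol{H_{\GG}})$, the Doob--Dynkin lemma yields, for each such $F \in \FF_s$, a Borel measurable map $h_F : \CanSpace \times [0,1] \to \{0,1\}$ with $\mathbf{1}_F(\omega) = h_F(B_{\cdot \wedge s}(\omega), U(\omega))$ for $\Prob$-a.e.\ $\omega$. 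Setting $\underline{\alpha}(r,w,u) := \mathbf{1}_{(s,t]}(r)\, h_F(w_{\cdot \wedge s}, u)$ and using the identity $(w_{\cdot \wedge r})_{\cdot \wedge s} = w_{\cdot \wedge s}$ whenever $s \leq r$, one checks \eqref{ControlExplicitForm} on these generators for $\Prob$-a.e.\ $\omega$ and every $r \geq 0$; the case of $\{0\} \times F_0$ is analogous, with the indicator replaced by a Borel function of $U$ alone.

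Next, I would extend the representation to every bounded real-valued predictable process by a monotone class argument performed directly on processes. Let $\mathcal{H}$ be the family of bounded $\mathcal{P}(\F)$-measurable processes admitting a measurable $\underline{\alpha}$ satisfying \eqref{ControlExplicitForm} for $\Prob$-a.e.\ $\omega$ and every $r \geq 0$. Linearity and containment of the constants are clear; for $\alpha^n \uparrow \alpha$ with $\alpha^n \in \mathcal{H}$ and associated $\underline{\alpha^n}$, one sets $\underline{\alpha} := \limsup_n \underline{\alpha^n}$, which is jointly measurable on $[0,+\infty) \times \CanSpace \times [0,1]$, and intersects the countably many full-measure sets on which each $\underline{\alpha^n}$-identity holds: on this intersection the $\limsup$ coincides with the true limit along $(r, B_{\cdot \wedge r}(\omega), U(\omega))$ and equals $\alpha_r(\omega)$ for every $r \geq 0$ simultaneously. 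The functional monotone class theorem then yields the representation for every bounded real predictable $\alpha$. The $A$-valued case follows by applying the real-valued result componentwise to $\alpha = (\alpha^1, \dots, \alpha^N)$ (bounded since $A$ is compact) and composing the resulting $\R^N$-valued $\underline{\alpha}$ with a Borel measurable retraction $\pi_A : \R^N \to A$ (e.g.\ a Borel selection of the nearest-point projection onto the compact set $A$); since $\alpha_r(\omega) \in A$, the composition does not alter values along the range of $(r,\omega) \mapsto (r, B_{\cdot \wedge r}(\omega), U(\omega))$.

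The main obstacle I foresee is that the statement requires equality for \emph{every} $r \geq 0$ on a single $\Prob$-full-measure event, not merely $dr \otimes d\Prob$-almost everywhere; a naive Doob--Dynkin argument applied pointwise in $r$ produces an $r$-dependent null set which cannot be aggregated. The monotone class argument on processes circumvents this difficulty precisely because the $\limsup$ is defined everywhere on $[0,+\infty) \times \CanSpace \times [0,1]$ before any $\omega$-section is inspected, so the measurable object $\underline{\alpha}$ is constructed once and for all. The remaining delicate point is the Borel-measurable retraction onto $A$, which is needed to guarantee that $\underline{\alpha}$ takes its values in $A$ even on the pushforward's complement in the auxiliary space.
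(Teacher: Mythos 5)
Your proof is correct and follows essentially the same route as the paper's: establish the representation on a generating $\pi$-system of the predictable $\sigma$-algebra via Doob--Dynkin, then extend to all predictable processes by a functional monotone class argument whose monotone-limit step aggregates countably many null sets. The only differences are organizational rather than substantive: the paper first passes to the non-completed filtration $\F^0$ (via Theorem 4.37 in \cite{HeWangYan}) and restricts to the continuity set before applying Doob's lemma, whereas you absorb the completion into per-generator almost-sure identities, and you make explicit the reduction from $A$-valued to bounded real-valued processes through a coordinatewise representation composed with a Borel retraction onto the compact set $A$ -- a step the paper leaves implicit.
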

	\begin{proof}
		Denote by \( \F^{B,0}\) the filtration generated by the Brownian motion \(B\) and not completed and by \( \F^0=(\FF_t^0)_{t \geq 0} \) the filtration defined as \(\FF_t^0=\sigma(\FF_t^{B,0} \cup \GG)\) and not completed. Notice that we can restrict to \(\F^0\)-predictable controls without loss of generality. Indeed, \( \F^0 \subseteq \F \) and, by Theorem 4.37 in \cite{HeWangYan}, any stochastic process predictable with respect to \( \F \) is indistinguishable from a \(\F^0\)-predictable process. Moreover, we will only consider the almost sure set \(\NewOmega \subseteq \Omega\) of all \(\omega\) such that the associated trajectory of the Brownian motion is continuous. This is sufficient to show that our result holds for a \(\F^0\)-predictable continuous process being indistinguishable from the original control \(\alpha\).
		
		Firstly, by Doob's measurability lemma, we can prove that the thesis holds for a specific class of predictable processes of the form \(\alpha=\mathbbm{1}_C \) for some \(C \) in the \(\pi\) system \(\Cont := \mathcal{C}_1 \cup \mathcal{C}_2 \), where 
		\(  \mathcal{C}_1 = \{ [0, + \infty) \times G :  G \in \GG \} \)
		and
		~\( \mathcal{C}_2 = \{ (s, +\infty) \times (S \cap G) : s \geq 0, \, S \in \FF_s^{B,0}, \, G \in \GG \}. \) Using the Monotone Class Theorem (see e.g.\ Theorem 2.1, Chapter 0, in \cite{RevuzYor}), we extend the class of processes admitting the representation \eqref{ControlExplicitForm}, showing that it contains all the indicator processes of the form \(\mathbbm{1}_K\) for \(K \in \mathcal{K}:= \{[0,+\infty) \times G: G \in \GG\} \cup \{(t,+\infty) \times S: t \geq 0, \, S \in \FF_t^0\}. \)
		Finally, notice that, by Lemma 10.2 in \cite{Kallenberg}, the set \(\mathcal{K}\) generates the predictable \(\sigma\)-algebra \(\Prev\): using the Functional version of the Monotone Class Theorem (see e.g. Lemma 1.6 by \cite{YongZhou}), we conclude that all \(\F^0\)-predictable processes satisfy the thesis.
	\end{proof}
	Therefore, in what follows we will identify (with a slight abuse of notation) the class of admissible controls with the class of processes admitting the representation \eqref{ControlExplicitForm} for a suitable measurable function \(\underline{\alpha}\). Notice that a similar class of controls has been adopted in \cite{DeCrescenzoFuhrmanKharroubiPham}.  
	
	In the following sections, \(L^2\) stands for the Hilbert space of square-integrable random variables on \((\Omega, \FF, \Prob ) \) endowed with the norm \(|| \xi ||_{L^2}:= ( \ExpVal [ |\xi|^2] )^{\frac{1}{2}}. \)
	
	\subsection{Dynamics of the controlled system}    \label{SectionDynamics} 
	Consider the coefficients 
	\begin{align*}
		b: \R^d \times \PPtwo \times A \rightarrow \R^d \text{ and }
		\sigma: \R^d \times \PPtwo \times A \rightarrow \R^{d \times m}.
	\end{align*}
	
	Suppose that they satisfy the following assumptions:
	\begin{Ipotesi}[\(\boldsymbol{H_{b, \sigma}}\)]
		\label{HpCoefficients}
		\begin{enumerate}
			\item[]
			\item[(H1)] \(b\) and \(\sigma\) are Borel functions and, for each fixed \((x,\mu) \in \R^d \times \PPtwo\), the functions \(b(x,\mu,\cdot)\) and \(\sigma(x,\mu,\cdot)\) are continuous;
			\item[(H2)] \(b\) and \(\sigma\) are Lipschitz in \((x, \mu)\) uniformly with respect to \(a\), i.e. there exists \(  L>0 \) such that
			\begin{align*}
				&|b(x, \mu, a)- b(x', \mu', a)| + |\sigma(x,\mu,a) - \sigma(x',\mu',a)| \\
				& \leq L \big( |x-x'| + W_2(\mu, \mu') \big) \quad \forall x,x' \in \R^d, \, \mu, \mu' \in \PPtwo, a \in A;
			\end{align*}
			\item[(H3)] \(b\) and \(\sigma\) satisfy a linear growth condition in \((x, \mu)\) uniformly with respect to \(a\), i.e. there exists \( M>0 \) such that
			\begin{align*}
				|b(x, \mu, a)| + |\sigma(x, \mu, a)| \leq M(1 + |x| + ||\mu||_2)
			\end{align*}
			for all \(x,x' \in \R^d, \, \mu, \mu' \in \PPtwo, a \in A.\)
		\end{enumerate}
	\end{Ipotesi}
	\begin{Osservazione}
		The continuity in \(a\) of the functions \(b(x,\mu,\cdot)\) and \(\sigma(x,\mu,\cdot)\) is not needed to guarantee existence and uniqueness of solutions to the state equation, but we will use it in Remark \ref{RemarkEquivalentFormulationSupersolutions} to write  Hamilton-Jacobi-Bellman equation in the form \(\ExpVal[\sup_{a \in A}H(x,\mu,a)]=0\) for a suitable function \(H\). The same remark applies to the gain function \(f\) introduced in Section \ref{SectionGainFunctional}. \qed
	\end{Osservazione}
	
	Given \(\alpha \in \Adm \), for all \( t \in [0, + \infty), \, \xi \in L^2(\Omega, \FF_t, \Prob; \R^d)\), consider the state equation
	\begin{equation}
		\label{StateEquation}
		\begin{cases}
			dX_s= b(X_s, \Prob_{X_s}, \alpha_s)ds + \sigma(X_s, \Prob_{X_s}, \alpha_s)dB_s , \qquad s \in (t, \infty)\\
			X_t= \xi.
		\end{cases}
	\end{equation}
	Under hypotheses (\nameref{HpCoefficients}), Theorem 4.21 in \cite{CarmonaDelarue1} with infinite time horizon guarantees that there exists a unique (up to indistinguishability) continuous and \(\F\)-adapted solution to equation \eqref{StateEquation}, denoted by \( X^{t, \xi, \alpha} \). 
	This solution also satisfies
	\[ \ExpVal \big[ \sup \limits_{ s \in [t,T]} |X^{t, \xi, \alpha}_s|^2 \big] < + \infty \quad \text{for any } T>0. \]
	Moreover, standard arguments, relying essentially on It\^o formula and Gronwall Lemma (see for instance Theorem 1 in \cite{Pachpatte}), lead to the following estimate:
	\begin{equation} \label{InitialConditionEstimate}
		\ExpVal[| X_s^{t, \xi, \alpha}|^2] \leq \bigg(|| \xi ||_{L^2}^2 + 6M^2 (s-t) \bigg) e^{(12M^2 +1)(s-t)},
	\end{equation}
	valid for \(0 \leq t \leq s\), \(\alpha \in \Adm\) and \(\xi \in L^2(\Omega, \FF_t, \Prob)\),
	where \(M\) is the constant appearing in Assumption (H3) in (\nameref{HpCoefficients}).
	
	\subsection{Gain functional and optimization problem} \label{SectionGainFunctional}
	Given the dynamics \eqref{StateEquation}, we want to introduce an optimization problem. 
	Fix a discount factor \(\beta>0\). Define the gain function:
	\begin{align*}
		f: \R^d \times \PPtwo \times A \rightarrow \R.
	\end{align*}
	Suppose that the following assumptions are satisfied:
	
	\begin{Ipotesi}[\(\boldsymbol{H_{f, \beta}}\)]
		\label{HpGain}
		\begin{enumerate}
			\item[]
			\item[(H4)] \(f\) is a Borel measurable function and, for each fixed \((x,\mu) \in \R^d \times \PPtwo\), the function \(f(x,\mu,\cdot)\) is continuous;
			\item[(H5)] \(f\) satisfies a quadratic growth condition in \((x,\mu)\) uniformly with respect to \(a\), i.e.\ there exists \( K>0 \) such that
			\[ |f(x,\mu,a) \leq K(1+|x|^2+ ||\mu||_2^2) \quad \text{for all } x \in \R^d, \mu \in \PPtwo, a \in A.\]
			\item[(H6)] \(\beta > 18M^2 + 2 \).
		\end{enumerate}
	\end{Ipotesi}
	Define the gain functional 
	\begin{equation}
		\label{DefinitionJ}
		J(t, \xi, \alpha):= \ExpVal \Bigg[ \int_{t}^{+ \infty} e^{-\beta(s-t)} f(X_s^{t, \xi, \alpha}, \Prob_{X_s^{t, \xi, \alpha}}, \alpha_s) ds \Bigg]
	\end{equation}
	for every \( t \geq 0, \, \xi \in L^2(\Omega, \FF_t, \Prob) \) and \(  \alpha \in \Adm. \)
	
	\begin{Osservazione}
		It can be easily verified that, under hypotheses (\nameref{HpGain}), the gain functional is well defined for every \(t \geq 0\), \( \xi \in L^2(\Omega, \FF_t, \Prob) \) and \(\alpha \in \Adm\). Notice that a weaker assumption, i.e.\ \(\beta>12M^2+1\), would guarantee that the integrand appearing in \(J\) is in \(L^1(\Omega \times [t, +\infty))\), but we will need a stronger bound in Section \ref{SectionEquivalentFormulationControlProblem}. \qed
	\end{Osservazione}
	
	The following alternative set of assumptions also guarantees that the gain functional is well defined:
	\begin{Ipotesi}[\(\boldsymbol{H_{f,\beta}'}\)]
		\label{HpGainAlternative}
		\item[(H4)] \(f\) is a Borel measurable function and, for each fixed \((x,\mu) \in \R^d \times \PPtwo\), the function \(f(x,\mu,\cdot)\) is continuous;
		\item[(H5)'] \(f\) is bounded, i.e.\ there exists a constant \(C_f\) such that
		\( |f(x,\mu,a)| \leq C_f  \)
		for any \(x \in \R^d\), \(\mu \in \PPtwo\) and \(a \in A\).
		\item[(H6)'] \(\beta>0\).
	\end{Ipotesi}
	
	We introduce an additional continuity assumption on the function \(f\):
	\begin{Ipotesi}[\(\boldsymbol{H_{f}}_{cont}\)] \label{HpContf}
		The function \(f\) is locally H\"older continuous in \((x,\mu)\) uniformly with respect to \(a\), i.e.\: there exists constants \(H>0\), \(\gamma_1 \in (0,1], \gamma_2 \in (0,1]\) such that
		\begin{align}
			\label{HolderContinuityoff}
			|f(x,\mu,a)-f(x',\mu',a)| \leq H[&|x-x'|^{\gamma_1}(1+|x|+|x'|)^{2-\gamma_1}  \\
			+&W_2(\mu,\mu')^{\gamma_2} (1+||\mu||_2+||\mu'||_2)^{2-\gamma_2} ]
		\end{align}
		for every \(x,x' \in \R^d\), \(\mu,\mu' \in \PPtwo\) and \(a \in A\).
	\end{Ipotesi}
	\begin{Osservazione}
		Notice that this assumption also appears in \cite{DeCrescenzoFuhrmanKharroubiPham}; it implies that
		the function \(f\) is locally uniformly continuous in \((x, \mu)\) uniformly with respect to \(a\), as required in \cite{CossoExistence}. \qed
	\end{Osservazione}
	Our goal is the maximization of the gain functional over the class of all feasible controls. We thus define a real function \(V  \) called \textit{lifted value function} as follows:
	\begin{equation}
		\label{LiftedValueFunction}
		V(t, \xi)= \sup \limits_{\alpha \in \Adm} J(t, \xi, \alpha)
	\end{equation}
	for every \(  t \in [0,+\infty)\) and \( \xi \in L^2(\Omega, \FF_t, \Prob) \).
	
	\begin{Osservazione}
		Notice that \(V\) is not defined on a product space, as the space to which \(\xi\) belongs depends on \(t\). Moreover, we will avoid the typical measurability issues emerging in the Dynamical Programming Principle because \(V\) is directly defined on \(L^2\). \qed
	\end{Osservazione}
	
	We now state the flow property and the dynamic programming principle for the lifted value function defined in \eqref{LiftedValueFunction}, which follow from the analogous properties in the finite horizon setting (repeating the proof of Theorem 3.4 in \cite{CossoExistence}, with slight modifications).
	\begin{Lemma}[Flow Property]
		\label{FlowLemma}
		For every \(  0 \leq t \leq s \leq r < + \infty \) and for every fixed \( \alpha \in \Adm \), 
		\[ X_r^{t, \xi, \alpha} = X_r^{s,X_s^{t, \xi, \alpha},\alpha}. \] 
	\end{Lemma}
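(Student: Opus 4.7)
My plan is to restrict $X^{t,\xi,\alpha}$ to $[s,+\infty)$, check that this restriction solves the McKean-Vlasov SDE on $[s,+\infty)$ with initial condition $X_s^{t,\xi,\alpha}$ at time $s$ and the same control $\alpha$, and then invoke the uniqueness part of Theorem 4.21 in \cite{CarmonaDelarue1} to identify it with $X^{s, X_s^{t,\xi,\alpha}, \alpha}$. Evaluating the identification at the fixed time $r \geq s$ will then deliver the claim.

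First I would check that the second SDE is well posed. Since $X^{t,\xi,\alpha}$ is continuous and $\F$-adapted, $X_s^{t,\xi,\alpha}$ is $\FF_s$-measurable, and the a priori bound \eqref{InitialConditionEstimate} applied on $[t,s]$ gives $X_s^{t,\xi,\alpha} \in L^2(\Omega, \FF_s, \Prob; \R^d)$. Under $(\boldsymbol{H_{b,\sigma}})$, Theorem 4.21 in \cite{CarmonaDelarue1} then defines $X^{s, X_s^{t,\xi,\alpha}, \alpha}$ as the unique $\F$-adapted continuous solution of \eqref{StateEquation} on $[s,+\infty)$ starting from $X_s^{t,\xi,\alpha}$.

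Next, writing \eqref{StateEquation} for $X^{t,\xi,\alpha}$ in integral form on $[t,r]$ and subtracting its value at time $s$, one obtains, for every $r \geq s$,
\begin{equation*}
X_r^{t,\xi,\alpha} = X_s^{t,\xi,\alpha} + \int_s^r b(X_u^{t,\xi,\alpha}, \Prob_{X_u^{t,\xi,\alpha}}, \alpha_u)\, du + \int_s^r \sigma(X_u^{t,\xi,\alpha}, \Prob_{X_u^{t,\xi,\alpha}}, \alpha_u)\, dB_u,
\end{equation*}
so $(X_r^{t,\xi,\alpha})_{r \geq s}$ is itself an $\F$-adapted continuous solution on $[s,+\infty)$ of the McKean-Vlasov SDE started from $X_s^{t,\xi,\alpha}$. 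Uniqueness forces indistinguishability with $X^{s, X_s^{t,\xi,\alpha}, \alpha}$, and evaluation at $r$ concludes.

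The only point that deserves to be flagged is the McKean-Vlasov twist in the uniqueness argument: Theorem 4.21 in \cite{CarmonaDelarue1} concerns an equation whose coefficients depend on the marginal law of the unknown solution itself, not a frozen-law linear SDE. In our setting, the law $\Prob_{X_u^{t,\xi,\alpha}}$ appearing in the restricted equation is by construction the marginal law of the restricted process, so both candidate solutions on $[s,+\infty)$ genuinely satisfy the same McKean-Vlasov SDE with the same initial datum and uniqueness applies without modification. I do not expect any further technical obstacle beyond making this consistency explicit.
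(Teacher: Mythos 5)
Your argument is correct and is essentially the paper's own route: the paper proves the flow property by pointing to the finite-horizon argument of Theorem 3.4 in \cite{CossoExistence}, which is precisely this restriction-plus-uniqueness reasoning (the restricted process solves the same McKean-Vlasov SDE from time $s$ with datum $X_s^{t,\xi,\alpha}$, and strong uniqueness from Theorem 4.21 in \cite{CarmonaDelarue1} identifies it with $X^{s,X_s^{t,\xi,\alpha},\alpha}$). Your explicit remark that the law entering the coefficients is the marginal of the restricted process itself, so uniqueness for the genuine McKean-Vlasov equation applies, is exactly the right point to flag.
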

	
	\begin{Teorema}[Dynamic programming principle]
		\label{DPPThm}
		Suppose that assumptions (\nameref{HpCoefficients}) and (\nameref{HpGain}) (or (\nameref{HpGainAlternative})) hold. Then, the lifted value function satisfies the following \textbf{dynamic programming principle (DPP)}: for \(  0 \leq t \leq s < + \infty \) and for any \(  \xi \in L^2(\Omega, \FF_t, \Prob) \)
		\begin{align} 
			\label{DPP}
			V(t, \xi)= \sup \limits_{\alpha \in \Adm} \biggl\{ \ExpVal \bigg[ \int_{t}^{s} &e^{-\beta(r-t)} f(X_r^{t, \xi, \alpha}, \Prob_{X_r^{t, \xi, \alpha}}, \alpha_r) dr \bigg] \nonumber \\+& e^{-\beta(s-t)} V(s, X_{s}^{t, \xi, \alpha}) \biggr\}.
		\end{align}
	\end{Teorema}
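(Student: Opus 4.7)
The plan is to mirror the finite horizon DPP proof from Theorem 3.4 in \cite{CossoExistence}, splitting the integral at the intermediate time $s$, invoking the Flow Lemma \ref{FlowLemma} to rewrite the tail in terms of the subproblem starting at $s$, and then dealing with the two inequalities separately. The infinite horizon nature only affects the analysis of the tail integral $\int_s^{+\infty} e^{-\beta(r-t)} f(\cdots)\,dr$, whose integrability (and the validity of Fubini-type manipulations) is ensured by the growth condition (H5), the second-moment estimate \eqref{InitialConditionEstimate}, and the assumption $\beta > 18M^2+2$.

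For the inequality $V(t,\xi) \le \sup\{\cdots\}$, I would fix an arbitrary $\alpha \in \Adm$, write
\[
J(t,\xi,\alpha) = \ExpVal\!\Bigl[\int_t^s e^{-\beta(r-t)} f(X_r^{t,\xi,\alpha},\Prob_{X_r^{t,\xi,\alpha}},\alpha_r)\,dr\Bigr] + e^{-\beta(s-t)}\,\ExpVal\!\Bigl[\int_s^{+\infty} e^{-\beta(r-s)} f(X_r^{t,\xi,\alpha},\Prob_{X_r^{t,\xi,\alpha}},\alpha_r)\,dr\Bigr],
\]
and then apply Lemma \ref{FlowLemma} to recognize the second expectation as $J(s, X_s^{t,\xi,\alpha}, \alpha)$, which is bounded by $V(s, X_s^{t,\xi,\alpha})$; taking the supremum over $\alpha$ gives the bound.

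The reverse inequality is the genuinely delicate step and requires a concatenation argument. Given $\varepsilon>0$ and any $\alpha^1 \in \Adm$, I would construct a near-optimal control $\alpha^{2,\varepsilon}$ for the problem starting at $(s, X_s^{t,\xi,\alpha^1})$ satisfying $J(s, X_s^{t,\xi,\alpha^1}, \alpha^{2,\varepsilon}) \ge V(s, X_s^{t,\xi,\alpha^1}) - \varepsilon$, and then define $\tilde\alpha := \alpha^1 \mathbbm{1}_{[0,s]} + \alpha^{2,\varepsilon}\mathbbm{1}_{(s,+\infty)}$, which lies in $\Adm$ and, by Lemma \ref{FlowLemma}, satisfies $X_r^{t,\xi,\tilde\alpha} = X_r^{s, X_s^{t,\xi,\alpha^1}, \alpha^{2,\varepsilon}}$ for $r \ge s$. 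This yields $V(t,\xi) \ge J(t,\xi,\tilde\alpha) \ge \ExpVal[\int_t^s \cdots] + e^{-\beta(s-t)}(V(s,X_s^{t,\xi,\alpha^1})-\varepsilon)$, and letting $\varepsilon \downarrow 0$ and taking the sup over $\alpha^1$ closes the argument.

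The main obstacle is the \emph{measurable selection} of the near-optimal control $\alpha^{2,\varepsilon}$ as a function of the random initial condition $\eta := X_s^{t,\xi,\alpha^1} \in L^2(\Omega,\FF_s,\Prob)$, in a way that (a) respects the $\F$-predictability requirement after concatenation and (b) is compatible with the McKean-Vlasov law-dependence of the coefficients from $s$ onward. This is exactly where the richness of $\GG$ provided by assumption $(H_\GG)$ and the representation of predictable controls through Lemma \ref{DoobProcessesLemma} enter: one selects a measurable map $\eta \mapsto \underline{\alpha}^{\varepsilon,\eta}$ taking values in the parametrization space of Lemma \ref{DoobProcessesLemma}, shifted so that its filtration starts at $s$, and pastes it onto $\alpha^1$. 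The remaining infinite-horizon subtlety is verifying that the tail $\ExpVal[\int_s^{+\infty}e^{-\beta(r-s)}|f|\,dr]$ is uniformly small as a function of $\alpha$ on a uniformly $L^2$-bounded family of initial conditions; combining the quadratic growth (H5), estimate \eqref{InitialConditionEstimate}, and $\beta>18M^2+2$ gives exactly this uniform integrability, so all limit/sup interchanges carried out in the finite horizon proof carry over directly.
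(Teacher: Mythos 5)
Your outline follows essentially the same route the paper intends: the paper gives no detailed proof but states that the DPP follows by repeating the proof of Theorem 3.4 in \cite{CossoExistence} with slight modifications, i.e.\ exactly the split-at-\(s\) decomposition, the use of Lemma \ref{FlowLemma} for the inequality \(\leq\), the concatenation of controls for the inequality \(\geq\), and the control of the tail \(\int_s^{+\infty}\) via (H5), estimate \eqref{InitialConditionEstimate} and the size of \(\beta\). One correction, though: the ``main obstacle'' you single out --- a measurable selection of the near-optimal control as a function of the realization of \(\eta=X_s^{t,\xi,\alpha^1}\) --- does not arise in this formulation and should not be attempted. Since \(V(s,\cdot)\) is defined directly on \(L^2(\Omega,\FF_s,\Prob)\) and the supremum in its definition runs over the single class \(\Adm\), for the fixed random variable \(\eta\) you simply pick one \(\varepsilon\)-optimal control \(\alpha^{2,\varepsilon}\in\Adm\) with \(J(s,\eta,\alpha^{2,\varepsilon})\geq V(s,\eta)-\varepsilon\) and concatenate it with \(\alpha^1\) at the deterministic time \(s\) (which preserves \(\F\)-predictability); the paper explicitly remarks that the usual measurability issues of the DPP are avoided precisely because \(V\) is defined on \(L^2\). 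Moreover, a realization-by-realization selection would be conceptually misleading in the McKean-Vlasov setting: the coefficients and the gain depend on the law of the state, so the subproblem at \((s,\eta)\) is not a pointwise function of the realizations of \(\eta\), and neither assumption \((H_{\GG})\) nor Lemma \ref{DoobProcessesLemma} is needed at this point beyond the representation of admissible controls already in place. With that step removed, the rest of your argument --- both inequalities, the identity \(X_r^{t,\xi,\tilde\alpha}=X_r^{s,X_s^{t,\xi,\alpha^1},\alpha^{2,\varepsilon}}\) for \(r\geq s\), and the uniform integrability of the discounted tail --- is correct and matches the intended proof.
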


	\section{The Time Invariance Property}
	\label{SectionTimeInvariance}
	\subsection{Preliminary considerations}
	\label{SectionTimeInvarianceForVaryingInTimeValueFunction}
	In general, when the time horizon is not finite and the coefficients are autonomous, the value function of a discounted optimal control problem does not depend on time: intuitively, if the time available to realise the gain is not bounded, the maximum attainable gain should not be affected by the initial time instant. In this section we are proving that this property holds even in the McKean-Vlasov case for the function \(V(t,\xi)\).
	
	As a preliminary step, for any starting time  \(t \geq 0\) we introduce an  alternative class of controls, denoted by \(\Adm_t\), and the corresponding value function \(V_t(t,\xi)\).  Denote by \(B^{(t)}=(B_s^{(t)})_{s \geq t}\) the process given by the increments of \(B\) from time \(t\), i.e.\ \(B_s^{(t)}=B_s-B_t\) for every \(s \geq t\). Notice that \(B^{(t)}\) is a Brownian motion on \([t, +\infty)\). We set 
	\( \Adm_t:= \{ \alpha: [t,+ \infty) \times \Omega \rightarrow A: \alpha \text{ is } \F^t \text{-predictable} \},  \)
	where \(\F^t\) is the filtration \(\F^t=(\FF_s^t)_{s \geq t}\) such that \(\FF_s^t= \sigma(\FF_s^{B,t} \cup \GG) \) and \(\F^{B,t}\) is the completed Brownian filtration of  \(B^{(t)}\). Intuitively,   
	\( \Adm_t\) can be interpreted as the class of controls from time \(t\)  that ``forgets'' the past of the Brownian motion. Choosing \( \Adm_t\)  instead of \( \Adm\)  from the beginning  is  more natural from a modelling viewpoint but may generate difficulties in the proof of the  Dynamic Programming Principle.

	Notice that \(\Adm_t \subseteq \Adm\) for any \(t \geq 0\) and \(\Adm_0= \Adm\).  Define
	\begin{equation}
		\label{DefinitionVt}
		V_t(t,\xi):= \sup_{\alpha \in \Adm_t} J(t,\xi,\alpha).
	\end{equation}
	It is thus clear that \(V_t(t,\xi) \leq V(t,\xi)\) and  \(V_t(0,\xi) = V(0,\xi)\). For this value function  we can easily prove the following result.
	\begin{Proposizione}
		\label{VaryinginTimeTimeInvarianceProperty}
		Suppose that assumptions (\nameref{HpCoefficients}) and (\nameref{HpGain}) or (\nameref{HpGainAlternative}) are satisfied. Let \(\xi \in L^2(\Omega, \GG, \Prob).\) Then,
		\begin{equation}
			\label{VaryinginTimeEquationTimeInvariance}
			V(0, \xi)=V_t(t,\xi) \qquad \forall t \geq 0.
		\end{equation}
	\end{Proposizione}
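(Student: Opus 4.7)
The plan is to prove both inequalities \(V_t(t,\xi) \leq V(0,\xi)\) and \(V(0,\xi) \leq V_t(t,\xi)\) by a Brownian shift argument. The key observation is that the shifted process \(\tilde{B}_r := B_{r+t} - B_t\), \(r \geq 0\), is a Brownian motion on \([0,+\infty)\) independent of \(\GG\), so that the joint law of \((\xi, \tilde{B}, U)\) coincides with that of \((\xi, B, U)\); here we use that \(\xi\) is \(\GG\)-measurable, so independent of both Brownian motions.

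For the first inequality, fix \(\alpha \in \Adm_t\). Since \(\F^t\) is the initial enlargement by \(\GG\) of the Brownian filtration generated by \(B^{(t)}\), the proof of Lemma \ref{DoobProcessesLemma} applies verbatim to yield a measurable function \(\underline{\alpha}: [0, +\infty) \times \CanSpace \times [0,1] \to A\) such that
\[
\alpha_{r+t}(\omega) = \underline{\alpha}\bigl(r, \tilde{B}_{\cdot \wedge r}(\omega), U(\omega)\bigr) \qquad \forall r \geq 0,\ \Prob\text{-a.s.}
\]
Define the candidate \(\tilde{\alpha} \in \Adm\) by \(\tilde{\alpha}_r(\omega) := \underline{\alpha}(r, B_{\cdot \wedge r}(\omega), U(\omega))\) and set \(Y_r := X^{t,\xi,\alpha}_{r+t}\). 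A direct substitution in \eqref{StateEquation} shows that \(Y\) solves the McKean-Vlasov SDE on \([0,+\infty)\) driven by \(\tilde{B}\), with initial condition \(\xi\) and control \(\alpha_{(\cdot+t)}\). The distributional equality between \((\xi, \tilde{B}, U)\) and \((\xi, B, U)\), combined with strong well-posedness of the McKean-Vlasov SDE, then yields that \((Y_r, \alpha_{r+t})_{r \geq 0}\) and \((X^{0,\xi,\tilde{\alpha}}_r, \tilde{\alpha}_r)_{r \geq 0}\) have the same law; in particular \(\Prob_{Y_r} = \Prob_{X^{0,\xi,\tilde{\alpha}}_r}\) for every \(r \geq 0\). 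Applying Fubini and the change of variable \(r = s - t\) in \eqref{DefinitionJ} gives
\[
J(t,\xi,\alpha) = \ExpVal\Bigl[\int_0^{+\infty} e^{-\beta r} f(Y_r, \Prob_{Y_r}, \alpha_{r+t})\,dr\Bigr] = J(0,\xi,\tilde{\alpha}) \leq V(0,\xi),
\]
and taking the supremum over \(\alpha \in \Adm_t\) proves \(V_t(t,\xi) \leq V(0,\xi)\). The reverse inequality is obtained by the symmetric construction: starting from \(\tilde{\alpha} \in \Adm\) with representation \(\tilde{\alpha}_r(\omega) = \underline{\alpha}(r, B_{\cdot \wedge r}(\omega), U(\omega))\), set \(\alpha_{r+t}(\omega) := \underline{\alpha}(r, \tilde{B}_{\cdot \wedge r}(\omega), U(\omega)) \in \Adm_t\) and repeat the argument.

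The main obstacle is justifying that the distributional identity \((\xi, \tilde{B}, U) \stackrel{d}{=} (\xi, B, U)\) transfers to equality in law of the solution processes of the two McKean-Vlasov SDEs, despite the non-linear dependence of the coefficients on the marginal law. The cleanest way is to exploit the Picard iteration underlying the existence/uniqueness result of Theorem 4.21 in \cite{CarmonaDelarue1}: each iterate is a deterministic measurable functional of the initial datum, the Brownian path, and the control (plus the previous law iterate, which is itself a deterministic object depending only on the law of the previous iterate), so the distributional identity of driving data propagates at every step, and the strong convergence of the iterates, guaranteed by the Lipschitz hypothesis in \((H_{b,\sigma})\), transfers the identity to the limit.
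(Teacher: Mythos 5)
Your proposal is correct and follows essentially the same route as the paper: you shift the control via its Doob-type representation (Lemma \ref{DoobProcessesLemma}) between the filtrations of \(B\) and \(B^{(t)}\), use the \(\GG\)-measurability of \(\xi\) to get equality in law of the driving data, and transfer this to the solution laws by a time change plus propagation through the Picard iterates — exactly the content of the paper's Lemma \ref{StationaritySDEs}, which you simply inline rather than state separately. The two inequalities are then obtained by the same gain-functional identification \(J(t,\xi,\alpha)=J(0,\xi,\tilde{\alpha})\) as in the paper.
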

	The proof relies on the following lemma.
	\begin{Lemma}
		\label{StationaritySDEs}
		Consider two solutions of the same controlled stochastic equation of McKean-Vlasov type \eqref{StateEquation} starting at different initial times from the same initial condition and under different controls. We denote by \(X^{(0)}=(X_s^{(0)})_{s \geq 0}\) the unique solution to equation
		\begin{equation}
			\label{StateEquationX0}
			\begin{cases}
				dX_s^{(0)}=b(X_s^{(0)},\Prob_{X_s^{(0)}},\alpha_s)ds+ \sigma(X_s^{(0)},\Prob_{X_s^{(0)}},\alpha_s)dB_s \quad \forall s \geq 0&\\
				X_0^{(0)}= \xi&
			\end{cases}
		\end{equation}
		and by \(X^{(t)}=(X_s^{(t)})_{s \geq 0}\) the unique solution to
		\begin{equation*}
			\begin{cases}
				dX_s^{(t)}=b(X_s^{(t)},\Prob_{X_s^{(t)}},\gamma_s)ds+ \sigma(X_s^{(t)},\Prob_{X_s^{(t)}},\gamma_s)dB_s \quad \forall s \geq t&\\
				X_t^{(t)}= \xi, &
			\end{cases}
		\end{equation*}
		where the coefficients satisfy Assumptions (\nameref{HpCoefficients}) and \(\alpha \in \Adm_0\), \(\gamma \in \Adm_t\). Assume furthermore that
		\begin{align}\label{SameLawsHp} 
			\LL(\xi,(B_s)_{s \geq 0},(\alpha_s)_{s \geq 0}) = \LL(\xi,(B_{s+t}^{(t)})_{s \geq 0},(\gamma_{s+t})_{s \geq 0}). 
		\end{align}
		Then, 
		\[ \LL((X_s^{(0)})_{s \geq 0}, (\alpha_s)_{s \geq 0})= \LL((X_{s+t}^{(t)})_{s \geq 0}, (\gamma_{s+t})_{s \geq 0}). \]
	\end{Lemma}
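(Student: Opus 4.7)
The strategy is to reduce the claim to a statement about a single McKean-Vlasov SDE with two pieces of ``input data'' that share the same joint distribution. Set $\tilde X_s := X_{s+t}^{(t)}$, $\tilde B_s := B_{s+t}-B_t = B^{(t)}_{s+t}$ and $\tilde\gamma_s := \gamma_{s+t}$ for $s \geq 0$. A change of the time variable in the defining SDE for $X^{(t)}$ shows that $\tilde X$ satisfies
\begin{equation*}
\tilde X_s = \xi + \int_0^s b\bigl(\tilde X_r,\Prob_{\tilde X_r},\tilde\gamma_r\bigr)\,dr + \int_0^s \sigma\bigl(\tilde X_r,\Prob_{\tilde X_r},\tilde\gamma_r\bigr)\,d\tilde B_r, \qquad s\geq 0,
\end{equation*}
where $\tilde B$ is a Brownian motion and $\tilde\gamma$ is predictable with respect to the $\GG$-enlargement of the (completed) filtration generated by $\tilde B$. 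Hence $X^{(0)}$ and $\tilde X$ both solve the same type of McKean-Vlasov SDE, driven by the input triples $(\xi,B,\alpha)$ and $(\xi,\tilde B,\tilde\gamma)$ respectively, and by hypothesis \eqref{SameLawsHp} these two triples have identical joint law.

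To exploit this, I approximate the two solutions by Picard iteration. Set $X^{0,(0)} \equiv \xi$, $\tilde X^0 \equiv \xi$ and, for $n \geq 0$, define
\begin{align*}
X^{n+1,(0)}_s &= \xi + \int_0^s b\bigl(X^{n,(0)}_r,\Prob_{X^{n,(0)}_r},\alpha_r\bigr)dr + \int_0^s \sigma\bigl(X^{n,(0)}_r,\Prob_{X^{n,(0)}_r},\alpha_r\bigr)dB_r,\\
\tilde X^{n+1}_s &= \xi + \int_0^s b\bigl(\tilde X^n_r,\Prob_{\tilde X^n_r},\tilde\gamma_r\bigr)dr + \int_0^s \sigma\bigl(\tilde X^n_r,\Prob_{\tilde X^n_r},\tilde\gamma_r\bigr)d\tilde B_r.
\end{align*}
I then claim, by induction on $n$, that $\LL\bigl(\xi,B,\alpha,X^{n,(0)}\bigr) = \LL\bigl(\xi,\tilde B,\tilde\gamma,\tilde X^n\bigr)$. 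The base case is exactly \eqref{SameLawsHp}. For the inductive step, the induction hypothesis in particular forces $\Prob_{X^{n,(0)}_r} = \Prob_{\tilde X^n_r}$ for every $r$, so the integrands above are obtained by applying the \emph{same} deterministic measurable map to $(r,X^{n,(0)}_r,\alpha_r)$ and $(r,\tilde X^n_r,\tilde\gamma_r)$ respectively. A standard simple-function approximation of the It\^o integral then shows that the stochastic integrals are deterministic measurable functionals of the pair (integrand, Brownian motion), so the equality of joint laws propagates to step $n+1$.

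Finally, under assumptions (\nameref{HpCoefficients}) the Picard iterates converge in $L^2$, uniformly on every compact time interval, to the unique strong solutions $X^{(0)}$ and $\tilde X$; passing to the limit in the equality of joint laws yields
\begin{equation*}
\LL\bigl((X^{(0)}_s)_{s\geq 0},(\alpha_s)_{s\geq 0}\bigr) = \LL\bigl((\tilde X_s)_{s\geq 0},(\tilde\gamma_s)_{s\geq 0}\bigr) = \LL\bigl((X^{(t)}_{s+t})_{s\geq 0},(\gamma_{s+t})_{s\geq 0}\bigr),
\end{equation*}
which is the desired identity. I expect the main technical point to lie in the inductive step: the integrands are not themselves deterministic functionals of $(\alpha,B)$ alone but depend on the iterate, so one has to keep track of the full quadruple $(\xi,B,\alpha,X^{n,(0)})$ throughout, and to check that the simple-function approximation of the It\^o integral factors through the joint distribution---a standard manipulation, made slightly more delicate by the initial enlargement of the Brownian filtration by $\GG$.
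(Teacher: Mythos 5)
Your proposal is correct and follows essentially the same route as the paper: a deterministic time change showing that \((X^{(t)}_{s+t})_{s\geq 0}\) solves the shifted equation driven by \((B^{(t)}_{s+t})_{s\geq 0}\) and \((\gamma_{s+t})_{s\geq 0}\), followed by propagating the equality of joint laws through the Picard iterates and passing to the limit. The only cosmetic caveat is that the It\^o integral is not literally a pathwise functional of (integrand, Brownian motion) but a limit in probability of such functionals, which is exactly the ``standard argument'' the paper invokes, so your inductive step is sound as intended.
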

	\begin{proof}
		Using a deterministic change of time in the stochastic integrals, it can be verified that the process \( (X_{s+t}^{(t)})_{s \geq 0}\) solves equation \eqref{StateEquationX0} on \([0, +\infty)\) with noise   \((B_{s+t}^{(t)})_{s \geq 0}\) in place of \(B\) and control \((\gamma_{s+t})_{s \geq 0}\) instead of \(\alpha\). The conclusion then follows from \eqref{SameLawsHp} by standard arguments: for instance, the desired equality in law can be proved for the Picard iterates used to construct the solutions, e.g.\ following the lines of the proof of  Theorem 9.5 in \cite{Baldi}. 
	\end{proof}

	\begin{proof}[Proof of Theorem \ref{VaryinginTimeTimeInvarianceProperty}] Intuitively, the proof relies on the idea to ``shift'' the controls forward and backward in time using \(B^{(t)}\) instead of \(B\) (and vice versa) in expression \eqref{ControlExplicitForm}, leading to a bijection from \(\Adm\) to \(\Adm_t\).
		
		Fix a \(\F\)-predictable control \(\alpha_s=\underline{\alpha}(s,B_{\cdot \wedge s}, U)\), \(s \geq 0.\)
		Choose \(t \geq 0\). Consider a measurable function \(\bar{\alpha}^t\) such that
		\[  \bar{\alpha}^t(z,w',u)=\underline{\alpha}(z-t, w'_{\cdot \wedge z-t },u) \quad \text{ for } z \in [t,+\infty), w \in \Cont([0,+\infty);\R^m), u \in [0,1],\]
		and construct the \(\F^t\)-predictable control \(\bar{\alpha}:=\bar{\alpha}^t(\cdot,(B_{s+t}^{(t)})_{s \geq 0},U)\).
		Therefore, we can denote by \(\bar{X}=X^{t, \xi, \bar{\alpha}}\) the unique solution to the state equation with initial time \(t\) and initial state \(\xi\) under the feasible control \(\bar{\alpha}\), i.e.\ equation
		\begin{equation}
			\label{ProofTimeInvarianceEq3}
			\begin{cases}
				d\bar{X}_s= b(\bar{X}_s,\Prob_{\bar{X}_s}, \bar{\alpha}^t(s,B_{\cdot \wedge s+t}^{(t)},U) ) ds &\\ \qquad \, + \sigma(\bar{X}_s,\Prob_{\bar{X}_s}, \bar{\alpha}^t(s,B_{\cdot \wedge s+t}^{(t)},U) ) d B_s &\\
				\bar{X}_t = \xi. &
			\end{cases}
		\end{equation}
		Notice that, by construction, \( \LL((B_s)_{s \geq 0},U,(\alpha_s)_{s \geq 0}) = \LL((B_{t+s}^{(t)})_{s \geq 0},U,(\bar{\alpha}_{s+t})_{s \geq 0})\), which implies 
		\( \LL((B_s)_{s \geq 0},\xi,(\alpha_s)_{s \geq 0}) = \LL((B_{t+s}^{(t)})_{s \geq 0},\xi,(\bar{\alpha}_{s+t})_{s \geq 0})))  \)
		for a \(\sigma(U)\)-measurable \(\xi\). As a consequence, applying Lemma \ref{StationaritySDEs}, we can affirm that
		\[ \LL((X_s^{0,\xi,\alpha})_{s \geq 0}, (\alpha_s)_{s \geq 0}) = \LL((\bar{X}_{s+t})_{s \geq 0},(\bar{\alpha}_{s+t})_{s \geq 0}) . \]
		Computing the gain, by the previous equality in law and a simple change of variable \(z=s+t\),
		\begin{equation*}
			J(0,\xi,\alpha)= \ExpVal \Bigg[  \int_{t}^{+ \infty} e^{-\beta (z-t)} f(\bar{X}_z, \Prob_{\bar{X}_z}, \bar{\alpha}_z) dz\Bigg] = J(t,\xi,\bar{\alpha}) \leq V_t(t,\xi).
		\end{equation*}
		Taking the supremum over all feasible controls, we conclude that
		\( V(0, \xi) \leq V_t(t,\xi). \)
		
		We now want to prove the converse inequality. Fix \(\gamma \in \Adm_t\). By Lemma \ref{DoobProcessesLemma}, on  \([t, +\infty)\), 
		\(\gamma\) is indistinguishable from
		\( (\underline{\gamma}(s,B^{(t)}_{\cdot \wedge s},U))_{s \geq t}  \)
		for a suitable measurable map \(\underline{\gamma}:[t, +\infty) \times \Cont([t, +\infty)) \times [0,1]\). Defining  a control \(\alpha \in \Adm\) setting 
		\( \alpha_s:= \underline{\gamma}(s+t, B_{. \wedge s+t}, U) \) for all \(s \geq 0\), similar arguments lead to
		\begin{equation*}
			J(t, \xi, \gamma) = J(0, \xi, \alpha) \leq V(0,\xi).
		\end{equation*} 
		Taking the supremum over all \(\gamma \in \Adm_t\), we conclude that
		\( V_t(t, \xi) \leq V(0,\xi).  \)
	\end{proof}
	
	\subsection{A counterexample to the time invariance property}
	\label{Sectioncounterexample}
	
	Contrary to the case of classical control theory, time invariance for the value function \(V(t,\xi)\) of a McKean-Vlasov control problem may fail. Thanks to Proposition \ref{VaryinginTimeTimeInvarianceProperty}, it is enough to exhibit an example where \(V(t,\xi)\neq V_t(t,\xi)\) for some \(t>0\). 
	We provide here an example of a finite horizon optimal control problem of the McKean-Vlasov type where restricting the class of controls from \(\Adm\) to \(\Adm_t\) worsens the optimal value.

	\begin{Esempio} \label{CounterExample}
		Fix a complete probability space \((\Omega, \FF, \Prob)\) on which a scalar Brownian motion \(B\) is defined. Consider a deterministic initial condition \(\xi=0\). Fix the dimension \(d=1\), the time horizon \(T=1\) and the action space \(A=[-1,1]\). Choose an initial time \(t \in (0, \frac{1}{2})\). Consider the following state equation:
		\begin{equation}
			\label{ExampleStateEquation}
			dX_s= \alpha_s ds, \quad s \geq t, \quad X_t=0.
		\end{equation}
		
		\begin{Osservazione}
			Except for the adaptedness condition on \(\alpha\), equation \eqref{ExampleStateEquation} looks like a deterministic controlled equation. Nonetheless, we may regard it as a stochastic system as follows. Choose  \(R\) large enough to guarantee that  \(\sup \limits_{s \in [t,T]} |X_s^{t,0,\alpha}| \leq 1-t < R  \) and take
			\(b:\R \rightarrow \R\) and  \(\sigma: \R \rightarrow \R\) Lipschitz with  \(b(x)=\sigma(x)=0\) for \(|x|<R\).
			The trajectories of 
			\eqref{ExampleStateEquation} thus coincide with the trajectories of the stochastic controlled system
			\begin{equation*}
				dX_s= (\alpha_s + b(X_s)) ds + \sigma(X_s) dB_s, \quad s \geq t, \quad X_t=0.
			\end{equation*}
			\qed
		\end{Osservazione}

		Define the following cost functional of the McKean-Vlasov type, which we want to minimize over a suitable class of controls: : 
		\begin{equation}
			\label{ExampleCostFunctional}
			J(t,0,\alpha):=H(\Prob_{X_1^{t,0,\alpha}}|\hat{\nu}),
		\end{equation}
		where \(\hat{\nu}=\frac{1}{2} \delta_{\{1-t\}}+ \frac{1}{2} \delta_{\{t-1\}}\). The function \(H\) is the modified relative entropy, defined as in Definition 15.1 in \cite{AmbrosioBruéSemola} and in Definition 9.4.1 in \cite{AmbrosioGigliSavaré}: \begin{equation}
			\label{DefinitionRelativeEntropy}
			H(\mu|\nu):=
			\begin{cases}
				Q \quad \text{if } \mu \centernot{\ll} \nu &\\
				\int_{\R^d} h \bigg(\frac{d \mu}{d \nu} (x) \bigg) \nu(dx) \quad \text{if } \mu \ll \nu&
			\end{cases}
		\end{equation}
		where \(Q\) is a sufficiently large real value and \(h: [0,+\infty) \rightarrow [0,+\infty)\) is a continuous, strictly convex, non-negative function having a unique minimum at \(s=1\) (e.g.\ \(h(y)=y \log(y)+(1-y)\) with \(h(0)=1\)) and \(\frac{d \mu}{d \nu}\) denotes the density of \(\mu\) with respect to \(\nu\) when it exists. Notice that \(H\) has a  unique strict minimum at \(\mu=\hat\nu\). Indeed, when  \(\mu \ll \hat{\nu}\) then \(\mu\) is supported in \(\{1-t, t-1\}\) with density 
		\begin{equation}
			\label{ExampleDensityofMuwrthatNu}
			\frac{d \mu}{d \hat{\nu}}(y)= 2 \lambda \mathbbm{1}_{\{1-t\}}(y) + 2(1-\lambda) \mathbbm{1}_{\{t-1\}}(y)
		\end{equation}
		for some \(\lambda \in [0,1]\).	
		Therefore, by the convexity of \(h\),
		\begin{align}
			H(\mu|\hat{\nu})
			&= \frac{1}{2} [h( 2 \lambda)+h(2(1-\lambda))] \nonumber \\
			&\geq  h \bigg(\frac{1}{2} (2 \lambda) +  \frac{1}{2} (2 (1-\lambda)) \bigg) =h(1) =H(\hat\nu|\hat\nu),\qquad \lambda \in [0,1].
			\label{ExampleWhereIUseStrictConvexity}
		\end{align} As \(h\) is strictly convex, the inequality is strict for any \(\lambda \ne \frac{1}{2}\),  proving that  the global minimum of \(H\) is reached only at \(\hat{\nu}\). 
		
		\textbf{1)} We first minimize the cost functional over the class \(\Adm_t\).
		
		Call \(\alpha_1\) and \(\alpha_2\) the controls which are identically equal to \(1\) and \(-1\), respectively.  As \(T=1\),
		\( X_1^{t,0,\alpha}= \int_{t}^{1} \alpha_s ds \)
		with \(\alpha_s \in [-1,1]\) for every \(s \in [0,1]\). Therefore, \(\Prob_{X^{t,0,\alpha_1}}=\delta_{1-t}\), \(\Prob_{X^{t,0,\alpha_2}}=\delta_{t-1}\) and for any other \(\alpha \in \Adm_t\)   \(\Prob_{X_1^{0,\xi,\alpha}}\) is not supported in \(  \{1-t, t-1\}\). This implies that both  \(\alpha_1\) and \(\alpha_2\) are optimal with value 
		\(({h(0)+h(2)})/{2}\) by 
		\eqref{ExampleWhereIUseStrictConvexity}.

		\textbf{2)} We now minimize the cost functional over the class  \(\Adm\).
		
		Consider the control \(\alpha^*\) defined as follows:
		\begin{equation}
			\label{ExampleDefinitionAlphaOpt}
			\alpha_s^*(\omega) := 
			\begin{cases}
				1 \qquad \text{if } B_{\frac{t}{2}}(\omega) \geq 0 &\\
				-1 \qquad \text{if } B_{\frac{t}{2}}(\omega) < 0 &
			\end{cases}
		\end{equation}
		for every \(s \geq t, \omega \in \Omega\). Clearly, \(\alpha^* \in \Adm \setminus \Adm_t\) because it depends on the source of randomness \(B_{\frac{t}{2}}\),  independent of \(B^{(t)}\). Moreover, we can rewrite it as 
		\( \alpha^*= \underline{\alpha}^*(B_{. \wedge t}), \)
		where  \(\underline{\alpha}^*(w):= \alpha_1\) if \(w_{\frac{t}{2}} \geq 0\) and \(\underline{\alpha}^*(w):= \alpha_2\) if \(w_{\frac{t}{2}} < 0\). Then
		\begin{equation}
			\label{ExampleLinearCombinationofTwoLaws}
			\Prob_{X_1^{t,0,\alpha^*}}= \lambda \Prob_{X_1^{t,0,\alpha_1}} + (1-\lambda) \Prob_{X_1^{t,0,\alpha_2}},
		\end{equation}
		
		by conditional independence, where \( \lambda=\Prob(\underline{\alpha}^*(B_{\cdot \wedge t})= \alpha_1) = \Prob(B_{\frac{t}{2}} \geq 0)=\frac{1}{2} \). 
		Therefore  \(
		\Prob_{X_1^{t,0,\alpha^*}}= \hat{\nu}\) and from  \eqref{ExampleWhereIUseStrictConvexity} we deduce that \(H(\Prob_{X_1^{t,0,\alpha^*}}|\hat{\nu})=h(1),\)
		which is the unique global minimum of \(H\).
		
		By  \textbf{1)} and \textbf{2)} we conclude that
		\begin{equation*}
			\inf \limits_{\alpha \in \Adm_t} H(\Prob_{X_T^{t,0,\alpha}}|\hat{\nu}) = \frac{h(2)+h(0)}{2} > h(1) = \inf \limits_{\alpha \in \Adm} H(\Prob_{X_T^{t,0,\alpha}}|\hat{\nu}):
		\end{equation*}
		the value at time \(t\) is strictly larger if we optimize over the smaller class of controls \(\Adm_t\).
	\end{Esempio}
	
	\begin{Osservazione}
		The conclusion above follows from the strict convexity of \(h\), used in  \eqref{ExampleWhereIUseStrictConvexity}, i.e.\ from the non-linear dependence of the gain functional on the terminal law of the state process. Therefore, it is not possible to construct an analogous example in classical control theory, where the dependence of the terminal gain upon the law is always linear. \qed
	\end{Osservazione}

	\subsection{Time invariance of the value function}
	\label{SectionEquivalentFormulationControlProblem}
	
	The previous example shows that appropriate conditions are required to guarantee the equality \(V(t,\xi)=V_t(t,\xi)\) and hence the time invariance property.  We will prove the following:
	\begin{Teorema} \label{ThmValueOverSmallerClassControls}
		Assume that (\nameref{HpCoefficients}), (\nameref{HpGain}) (or (\nameref{HpGainAlternative})) and (\nameref{HpContf}) are satisfied. Then, for every choice of \(t \geq 0\) and \(\xi \in L^2(\Omega,\GG, \Prob)\), it holds that
		\begin{equation} \label{EqualityWRTSmallerClassofControls}
			V_t(t, \xi) = \sup \limits_{\alpha \in \Adm_t} J(t,\xi,\alpha) = \sup \limits_{\alpha \in \Adm} J(t,\xi,\alpha) =V(t, \xi).
		\end{equation}
	\end{Teorema}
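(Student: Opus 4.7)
The inequality $V_t(t,\xi) \leq V(t,\xi)$ is immediate from $\Adm_t \subseteq \Adm$, so the content of the theorem lies in the reverse inequality, which Example~\ref{CounterExample} shows must fail without the continuity hypothesis (\nameref{HpContf}). Fix $\alpha \in \Adm$ and $\epsilon > 0$. By Lemma~\ref{DoobProcessesLemma}, write $\alpha_s(\omega) = \underline{\alpha}\bigl(s, B_{\cdot \wedge s}(\omega), U(\omega)\bigr)$ for a Borel function $\underline{\alpha}$. For every past trajectory $w_0 \in \Cont([0,t];\R^m)$, introduce the \emph{shifted} control
\[
	\tilde{\alpha}^{w_0}_s(\omega) \;:=\; \underline{\alpha}\bigl(s,\; w_0 \oplus_t B^{(t)}_{\cdot\wedge(s-t)}(\omega),\; U(\omega)\bigr), \qquad s \geq t,
\]
where $w_0 \oplus_t \eta$ denotes the concatenation of $w_0$ on $[0,t]$ with $\eta$ shifted from time $t$ onward. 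Each $\tilde{\alpha}^{w_0}$ lies in $\Adm_t$ by construction, so the plan is to produce a specific $w_0^{\ast}$ for which $J(t,\xi,\tilde{\alpha}^{w_0^{\ast}}) \geq J(t,\xi,\alpha) - 2\epsilon$.

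The central technical tool, whose proof I will defer to the Appendix, is a \emph{disintegration formula} for the McKean--Vlasov dynamics with respect to $B_{\cdot \wedge t}$. Setting $\mu_s := \Prob_{X_s^{t,\xi,\alpha}}$ and $\nu := \Prob_{B_{\cdot \wedge t}}$, the regular conditional law of $X_s^{t,\xi,\alpha}$ given $B_{\cdot \wedge t} = w_0$ coincides, for $\nu$-a.e.~$w_0$, with the law of $Y^{w_0}_s$, where $Y^{w_0}$ is the solution of the \emph{frozen-law} SDE driven by $B^{(t)}$, with control $\tilde{\alpha}^{w_0}$ and with measure coefficient frozen to $\mu$. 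This rewriting is needed precisely because the deterministic object $\mu$ cannot be conditioned on $B_{\cdot \wedge t}$ directly. Applying Fubini then yields
\[
	J(t,\xi,\alpha) \;=\; \int_{\Cont([0,t];\R^m)} \tilde{J}(t,\xi,\tilde{\alpha}^{w_0};\mu)\,\nu(dw_0),
\]
where $\tilde{J}(\cdot;\mu)$ denotes the gain of the frozen-law control problem.

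Next, I compare $\tilde{J}(t,\xi,\tilde{\alpha}^{w_0};\mu)$ with the actual McKean--Vlasov gain $J(t,\xi,\tilde{\alpha}^{w_0})$, which involves instead the solution $Z^{w_0}$ of the McKean--Vlasov SDE with its self-consistent law $\Prob_{Z^{w_0}_{\cdot}}$. Combining the H\"older continuity of $f$ from (\nameref{HpContf}) with the Lipschitz and linear growth bounds (\nameref{HpCoefficients}), the second-moment estimate \eqref{InitialConditionEstimate}, and the discount condition (H6) (which makes the infinite-horizon integrand summable), one establishes an $L^1(\nu)$-bound on $\bigl| \tilde{J}(t,\xi,\tilde{\alpha}^{w_0};\mu) - J(t,\xi,\tilde{\alpha}^{w_0}) \bigr|$ that can be made smaller than $\epsilon$. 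This gives $\int J(t,\xi,\tilde{\alpha}^{w_0})\,\nu(dw_0) \geq J(t,\xi,\alpha) - \epsilon$, and choosing $w_0^{\ast}$ realizing this average up to $\epsilon$ produces $J(t,\xi,\tilde{\alpha}^{w_0^{\ast}}) \geq J(t,\xi,\alpha) - 2\epsilon$. Since $\tilde{\alpha}^{w_0^{\ast}} \in \Adm_t$, taking the supremum over $\alpha$ and letting $\epsilon \to 0$ yields $V(t,\xi) \leq V_t(t,\xi)$.

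\textbf{The main obstacle} is precisely the comparison between $\tilde{J}(t,\xi,\tilde{\alpha}^{w_0};\mu)$ and $J(t,\xi,\tilde{\alpha}^{w_0})$: conditioning does not commute with the measure coefficient of a McKean--Vlasov SDE, so for generic $w_0$ the conditional law of $X_s^{t,\xi,\alpha}$ differs from the self-consistent law of $Z^{w_0}_s$. Example~\ref{CounterExample} shows that when $f$ depends on $\mu$ in a genuinely nonlinear and discontinuous way this mismatch can produce a strict gap; this is why (\nameref{HpContf}) is essential, as it lets us absorb the discrepancy via a modulus of $W_2(\mu_s,\Prob_{Z^{w_0}_s})$, which is then handled through stability estimates for the McKean--Vlasov flow combined with the averaging in $w_0$.
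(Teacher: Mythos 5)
The trivial inequality and the identification of the disintegration of the dynamics with respect to \(B_{\cdot \wedge t}\) match the paper's strategy, but your central approximation step is not valid, and this is where the real content of the theorem lies. You claim an \(L^1(\nu)\)-bound on \(|\tilde J(t,\xi,\tilde\alpha^{w_0};\mu)-J(t,\xi,\tilde\alpha^{w_0})|\) ``that can be made smaller than \(\epsilon\)''; but there is no small parameter anywhere in this comparison: \(t\), \(\xi\), \(\alpha\) are fixed, \(w_0\) is merely integrated out, and the hypotheses (\nameref{HpContf}), (\nameref{HpCoefficients}), \eqref{InitialConditionEstimate} only provide moduli of continuity, not smallness of \(W_2(\mu_s,\Prob_{Z^{w_0}_s})\). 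For a fixed frozen path \(w_0\) the self-consistent law of \(Z^{w_0}\) differs from the averaged law \(\mu_s\) by an amount of order one whenever the original control genuinely randomizes through the past noise. A variant of Example \ref{CounterExample} with a \(W_2\)-Lipschitz gain makes this concrete: take \(dX_s=\alpha_s ds\), \(f\) depending only on the law, strictly concave and \(W_2\)-Lipschitz, and \(\alpha\) chosen as \(\pm 1\) according to the sign of \(B_{t/2}\); then \(\mu_s\) is a nondegenerate mixture while \(\Prob_{Z^{w_0}_s}\) is a Dirac mass for every \(w_0\), so your difference stays bounded away from \(0\) uniformly in \(w_0\), and averaging in \(w_0\) cannot help because \(f\) is nonlinear in the measure. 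In other words, freezing the past to a single deterministic path destroys exactly the randomization that the control in \(\Adm\) exploits, and continuity of \(f\) cannot ``absorb'' an order-one gap.

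The paper's proof keeps the randomization instead of freezing it. For \(u>t\) it constructs, from any \(\alpha\in\Adm\), a control \(\bar\gamma\in\Adm_t\) with \emph{exact} equality \(J(u,\xi,\bar\gamma)=J(u,\xi,\alpha)\): the disintegrated family \((X^w)_{w\in\Cont([0,u])}\) of \eqref{DisintegrationEquationfromu} is reparametrized through the scaling bijection \(\phi:\Cont([t,u])\to\Cont([0,u])\), whose pushforward maps \(W_{[t,u]}\) to \(W_{[0,u]}\), and is then composed with the genuinely random fresh increments \(B_{[t,u]}\); by Proposition \ref{LemmaDisintegrationandCompositionBgivesX} this recomposition is again a McKean--Vlasov solution under a control measurable with respect to \(B^{(t)}\) and \(U\), i.e.\ in \(\Adm_t\), with the same law and hence the same gain. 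The role of (\nameref{HpContf}) is then confined to Proposition \ref{ThmContinuityofJwrtTime}: the right continuity of \(u\mapsto J(u,\xi,\cdot)\), uniformly in the control, transfers the exact equality at time \(u\in(t,t+\delta)\) into the \(2\epsilon\)-inequality at time \(t\). Your proposal is missing precisely this idea of regenerating the past noise from the post-\(t\) Brownian increments on an arbitrarily short interval (or, as the paper remarks, from \(U\)), together with the uniform-in-\(\alpha\) time continuity of \(J\); without it, the comparison you rely on is false in general.
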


	The continuity assumption (\nameref{HpContf}), which was missing  in Example \ref{CounterExample}, plays a crucial role here. 
	In the proof of Theorem \ref{ThmValueOverSmallerClassControls}, it guarantees the continuity of the gain functional with respect to time, but it also qualifies as a key technical assumption for the rest of the paper (especially for the Law Invariance Property and the continuity of the value function).
	In order to prove Theorem \ref{ThmValueOverSmallerClassControls}, we need to introduce the notations and technical results contained in the following paragraph.
	
	\subsubsection{Disintegration of McKean-Vlasov systems}
	
	We first recall the definition of concatenation of paths.
	\begin{Definizione}
		Fix a time instant \(t \geq 0\). Let \(w_1, w_2\) be two \(\R^m\)-valued functions with domain \([0+\infty)\) and \([t, +\infty)\), respectively. Define the concatenation of \(w_1\) and \(w_2\) as the  function \(w_1 \otimes_t w_2\) such that
		\[ (w_1 \otimes_t w_2)(s) =
		\begin{cases}
			w_1(s) \qquad& \text{ if } 0 \leq s \leq t \\
			w_1(t) + w_2(s) -w_2(t) \qquad& \text{ if } s \geq t. 
		\end{cases} \]
	\end{Definizione}
	
	In particular, if \(w_1\) and \(w_2\) are Brownian trajectories, we deduce that, for every \(t \geq 0\),
	\[(B \otimes_t B)(\omega)=(B \otimes_t B^{(t)})(\omega)= B(\omega).\] 
	As a consequence, by Lemma \ref{DoobProcessesLemma}, every \(\F\)-predictable process \(\alpha\) valued in \(A\) is indistinguishable both from \((\underline{\alpha}(s,(B \otimes_t B)_{\cdot \wedge s}, U))_{s \geq 0}\) and from  \((\underline{\alpha}(s,(B \otimes_t B^{(t)})_{\cdot \wedge s}, U))_{s \geq 0}\) for a suitable measurable function \( \underline{\alpha}:[0,+\infty) \times \CanSpace \times [0,1] \rightarrow A \), for any choice of \(t \geq 0\).
	
	We now want to state a ``disintegration property'' of the Mckean-Vlasov equation with respect to the first part of the Brownian trajectory. Assume that hypotheses (\nameref{HpCoefficients}) and (\nameref{HpGain}) (or (\nameref{HpGainAlternative})) are satisfied. Fix an initial time \(r>0\) and a control \(\alpha=(\underline{\alpha}(s, B_{. \wedge (r \wedge s)} \otimes_r B_{.\wedge s}^{(r)},U))_{s \geq 0} \in \Adm.\)
	Call \(X:=X^{r,\xi,\alpha}\) the unique solution to the McKean-Vlasov SDE \eqref{StateEquation} starting from \(\xi\) at time \(r\) under the control \(\alpha\), i.e.\
	\begin{equation}
		\label{StateEquationFromr}
		dX_s = b(X_s,\Prob_{X_s}, \alpha_s) ds + \sigma(X_s,\Prob_{X_s}, \alpha_s) dB_s, \quad s \geq r, \quad X_r=\xi.
	\end{equation}
	Now associate to each path \(w \in \Cont([0,r])\) a control \(\alpha^r(w) \in \Adm_r\) defined as follows:
	\begin{equation}
		\label{Definitionalpharw}
		\alpha_s^r(w):= \underline{\alpha}(s, w \otimes_r B_{\cdot \wedge s}^{(r)},U) \quad \text{ for all } s \geq r.
	\end{equation}
	Consider the following system of SDEs:
	\begin{align}
		\label{DisintegrationEquation}
		dX_s^w&= b \bigg(X_s^w, \int_{\Cont([0,r])} \Prob_{X_s^{w'}} W_{[0,r]}(w'), \alpha_s^r(w)\bigg) ds \nonumber \\
		&+ \sigma \bigg(X_s^w, \int_{\Cont([0,r])} \Prob_{X_s^{w'}} W_{[0,r]}(w'), \alpha_s^r(w) \bigg) dB_s, \quad s \geq r, \quad X_r^w=\xi
	\end{align}
	parametrized by \(w \in \Cont([0,r])\),  where \(W_{[0,r]}\)  denotes the Wiener measure on the space \(\Cont([0,r])\).
	\begin{Lemma} \label{LemmaWellPosednessofDisintegrationEquation}
		Under hypotheses (\nameref{HpCoefficients}), the system of SDEs \eqref{DisintegrationEquation} is well-posed, i.e.\ it admits a unique solution \((X^w)_{w \in \Cont([0,r])}\).
	\end{Lemma}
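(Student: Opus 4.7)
The plan is to recast \eqref{DisintegrationEquation} as a single McKean--Vlasov SDE on an enlarged probability space, so that existence and uniqueness reduce to the classical infinite-horizon version of Theorem 4.21 in \cite{CarmonaDelarue1} already invoked in Section \ref{SectionDynamics}. Concretely, I would introduce the product space \(\hat\Omega:=\Cont([0,r])\times\Omega\), equipped with \(\hat\FF:=\Bor(\Cont([0,r]))\otimes\FF\), \(\hat\Prob:=W_{[0,r]}\otimes\Prob\), and filtration \(\hat\FF_s:=\Bor(\Cont([0,r]))\otimes\FF_s\). On \(\hat\Omega\) I set \(\hat B_s(w,\omega):=B_s(\omega)\), \(\hat\xi(w,\omega):=\xi(\omega)\), and \(\hat\alpha_s(w,\omega):=\underline{\alpha}(s,w\otimes_r B^{(r)}_{\cdot\wedge s}(\omega),U(\omega))\), which matches \(\alpha^r_s(w)\) from \eqref{Definitionalpharw}. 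Since \(\hat\Prob\) is a product measure, \(\hat B\) remains a Brownian motion with respect to \(\hat\F\), \(\hat\xi\) is square-integrable, and \(\hat\alpha\) is \(\hat\F\)-predictable with values in \(A\).

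I would then consider the McKean--Vlasov SDE
\[
d\hat X_s=b(\hat X_s,\hat\Prob_{\hat X_s},\hat\alpha_s)ds+\sigma(\hat X_s,\hat\Prob_{\hat X_s},\hat\alpha_s)d\hat B_s,\quad s\geq r,\quad \hat X_r=\hat\xi,
\]
on \((\hat\Omega,\hat\FF,\hat\Prob)\). Under (\nameref{HpCoefficients}), this admits a unique continuous \(\hat\F\)-adapted square-integrable solution \(\hat X\). Setting \(X^w_s(\omega):=\hat X_s(w,\omega)\) and applying Fubini, one obtains the crucial identity \(\hat\Prob_{\hat X_s}=\int_{\Cont([0,r])}\Prob_{X^w_s}\,W_{[0,r]}(dw),\) which immediately yields that the disintegrated processes \(X^w\) satisfy \eqref{DisintegrationEquation} for \(W_{[0,r]}\)-almost every \(w\). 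To promote this to every \(w\), I would then freeze the common deterministic measure flow \(\mu_s:=\hat\Prob_{\hat X_s}\in\PPtwo\) and, for each individual \(w\), solve the resulting classical SDE on \((\Omega,\FF,\Prob)\) with Lipschitz coefficients driven by \(B\); its unique solution coincides \(W_{[0,r]}\)-a.s.\ with the \(w\)-section of \(\hat X\) and defines the required \(X^w\) for every \(w\in\Cont([0,r])\).

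Uniqueness of the family \((X^w)_w\) follows by reversing the lifting: any two solutions assemble, through \((w,\omega)\mapsto X^w_s(\omega)\), into two solutions of the same McKean--Vlasov SDE on \(\hat\Omega\) driven by \(\hat B\), and must coincide by the uniqueness part of the cited theorem. The main technical points to watch are the \(\hat\F\)-predictability and \(A\)-valuedness of \(\hat\alpha\), which follow from the explicit representation \eqref{Definitionalpharw} via Lemma \ref{DoobProcessesLemma} together with measurability of the concatenation map \((w_1,w_2)\mapsto w_1\otimes_r w_2\), and the Fubini-type identification of \(\hat\Prob_{\hat X_s}\) with \(\int\Prob_{X^w_s}W_{[0,r]}(dw)\). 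Both are direct consequences of the product structure of \(\hat\Prob\), which is precisely what enables the ``disintegration with respect to the first part of the Brownian trajectory'' in the first place.
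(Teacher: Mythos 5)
Your proposal is correct, but it follows a genuinely different route from the paper. The paper does not lift the problem: it works directly on the space \(\Cont([r,T];\PPtwo)\) of measure flows, defines the map \(\Phi\) sending a flow \(\nu\) to the \(W_{[0,r]}\)-average of the laws of the decoupled solutions \(X^{w,\nu}\), shows via Kantorovich duality and Gronwall-type estimates that an iterate \(\Phi^n\) is a contraction for the uniform-in-time \(W_2\) distance, applies Banach--Caccioppoli, and then glues the solutions on \([r,T]\) to reach the infinite horizon; joint measurability in \((s,\omega,w)\) is handled by the Stricker--Yor results. Your argument instead transfers the whole system to the product space \(\Cont([0,r])\times\Omega\) and reuses the already-invoked McKean--Vlasov well-posedness theorem, with Fubini identifying \(\hat\Prob_{\hat X_s}\) with the averaged law; this is shorter and avoids redoing the contraction estimates, at the price of some measurability bookkeeping that you should make explicit: (i) the identification of the \(w\)-sections of the product-space stochastic integral with \(\omega\)-wise stochastic integrals is a stochastic Fubini statement (essentially the Stricker--Yor material the paper cites anyway), and it only gives the equation for \(W_{[0,r]}\)-a.e.\ \(w\), which you correctly repair by freezing the flow \(\mu_s=\hat\Prob_{\hat X_s}\); (ii) in the uniqueness step, assembling an arbitrary solution family \((Y^w)_w\) into an \(\hat\F\)-adapted process requires that the solution concept include joint measurability of \((s,\omega,w)\mapsto Y^w_s(\omega)\) compatible with the product filtration, and the lifted uniqueness then yields coincidence only for a.e.\ \(w\); to conclude for every \(w\) you should add one line noting that the two families then have the same averaged flow, so each \(Y^w\) and \(X^w\) solve the same frozen-flow classical SDE with the same control \(\alpha^r(w)\) and coincide by classical uniqueness (alternatively, one can bypass the assembly entirely by comparing the two averaged flows with a Gronwall estimate, which is closer in spirit to the paper's contraction argument). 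With these points spelled out, your lifting argument is a valid and arguably more economical proof; the paper's fixed-point proof is self-contained and delivers uniqueness of the averaged flow, hence of every \(X^w\), directly.
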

	
	We postpone the proof to Appendix \ref{AppendixEquaivalenctFormulation}. 
	Letting  \(X=(X^w)_{w \in \Cont([0,r])}\) denote the solution to   \eqref{DisintegrationEquation}, call \(X^B:=X_{|w=B_{. \wedge r}}^w\) the stochastic process defined by   composition 
	with the Brownian trajectory on   \([0,r]\), namely
	\[  X_s^B(\omega)= X_s^{B_{[0,r]}(\omega)}(\omega) \]
	for every \(s \geq r,\) \(\omega \in \Omega\).  Then we have the following disintegration result.

	\begin{Proposizione} 
		\label{LemmaDisintegrationandCompositionBgivesX}
		Under hypotheses (\nameref{HpCoefficients}),  up to null sets, \(X^B\) is equal to the unique solution \(X^{r,\xi,\alpha}\) to the state equation \eqref{StateEquationFromr} starting from \(\xi\) at time \(r\) under the control \(\alpha\).
	\end{Proposizione}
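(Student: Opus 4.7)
The plan is to show that the process $X^B$ obtained by evaluating the parametrized family $(X^w)_{w \in \Cont([0,r])}$ at the random path $w = B_{\cdot \wedge r}(\omega)$ solves the McKean-Vlasov SDE \eqref{StateEquationFromr}; well-posedness of that equation (Theorem 4.21 in \cite{CarmonaDelarue1}) then forces $X^B = X^{r,\xi,\alpha}$ up to indistinguishability. Three ingredients are needed for the substitution to go through: joint measurability of $(w,s,\omega) \mapsto X^w_s(\omega)$, so that $X^B$ is a well-defined adapted process; identification of the deterministic mean-field argument $\bar\mu_s := \int_{\Cont([0,r])} \Prob_{X^{w'}_s} W_{[0,r]}(dw')$ with the marginal law $\Prob_{X^B_s}$; and identification of the control, namely $\alpha^r_s(B_{\cdot \wedge r}) = \alpha_s$.

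The control identification is immediate from the definition \eqref{Definitionalpharw} together with the concatenation identity $B_{\cdot \wedge r} \otimes_r B^{(r)}_{\cdot \wedge s} = B_{\cdot \wedge s}$, which gives $\alpha^r_s(B_{\cdot \wedge r}) = \underline\alpha(s, B_{\cdot \wedge s}, U) = \alpha_s$. For the mean-field identification, the key observation is that, for each fixed $w$, equation \eqref{DisintegrationEquation} is driven by $dB_s = dB^{(r)}_s$ on $[r,\infty)$, has initial condition $\xi$ (which is $\GG$-measurable), deterministic mean-field coefficient $\bar\mu_s$, and control $\alpha^r_s(w)$ depending only on $w$, $B^{(r)}$ and $U$. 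Hence $X^w_s$ is a measurable functional of $(\xi, B^{(r)}, U)$, which by $(\boldsymbol{H_{\GG}})$ is independent of $B_{\cdot \wedge r}$. Combining this independence with $\LL(B_{\cdot \wedge r}) = W_{[0,r]}$ via the standard freezing lemma yields $\Prob_{X^B_s} = \int_{\Cont([0,r])} \Prob_{X^w_s} W_{[0,r]}(dw) = \bar\mu_s$. Substituting $w = B_{\cdot \wedge r}(\omega)$ into \eqref{DisintegrationEquation} then shows that $X^B$ satisfies \eqref{StateEquationFromr}, and pathwise uniqueness for the McKean-Vlasov SDE concludes the argument.

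The main obstacle is the joint-measurability/substitution step: the stochastic integral $\int_r^\cdot \sigma(X^w_u, \bar\mu_u, \alpha^r_u(w)) \, dB_u$ is a priori defined only up to a $w$-dependent null set, so pasting identities that hold "for every fixed $w$" into the random parameter $w = B_{\cdot \wedge r}$ is not automatic. The standard remedy, which presumably underpins the proof of Lemma \ref{LemmaWellPosednessofDisintegrationEquation} in Appendix \ref{AppendixEquaivalenctFormulation}, is to construct $(X^w)_{w}$ via Picard iteration in the Banach space of $\F$-adapted, jointly measurable, square-integrable processes indexed by $w \in \Cont([0,r])$, with a contraction estimate uniform in $w$. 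Joint measurability is preserved by Fubini at each iterate and passes to the $L^2$-limit, while the stochastic-integral substitution rule (first verified for simple integrands via Fubini, then extended by $L^2$-approximation) legitimizes evaluating \eqref{DisintegrationEquation} pathwise at the random parameter $w = B_{\cdot \wedge r}$.
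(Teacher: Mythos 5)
Your proposal follows essentially the same route as the paper's proof: joint measurability of $(s,\omega,w)\mapsto X_s^w(\omega)$ (which the paper obtains from the results on parameter-dependent stochastic integrals in Stricker--Yor, the same device you invoke to justify substituting $w=B_{\cdot\wedge r}$), the control identification via the concatenation identity, the identification $\Prob_{X_s^B}=\int_{\Cont([0,r])}\Prob_{X_s^{w'}}W_{[0,r]}(dw')$ through independence of $X^w$ from $(B_s)_{s\in[0,r]}$ and a freezing/Fubini argument, and finally uniqueness for the McKean--Vlasov SDE. Your extra discussion of the Picard-iteration construction to legitimize the pathwise substitution is a more explicit treatment of a point the paper delegates to the Stricker--Yor reference, but it is not a different method.
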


	\begin{proof} 
		We begin by observing that \(X^B\) is well defined because, by Section 12 in \cite{StrickerYorParamètre}, the map
		\( (s,\omega,w) \rightarrow X_s^w(\omega)  \)
		is measurable. Moreover, for every fixed \(w \in \Cont([0,r])\), \(X^w\) is independent of \((B_s)_{s \in [0,r]}\), as all the sources of randomness appearing in equation \eqref{DisintegrationEquation} (\(\xi\), \(U\) and \(B^{(r)}\)) are independent of \((B_s)_{s \in [0,r]}\). As \(X=(X^w)_{w \in \Cont([0,t])}\) is the solution to  \eqref{DisintegrationEquation}, \(X^B\) solves
		\begin{align}
			\label{EquationXB}
			dX_s^B&=b \bigg(X_s^B, \int_{\Cont([0,r])} \Prob_{X_s^{w'}} W_{[0,r]}(dw'), \alpha_s^r(w)_{|w=B_{[0,r]}} \bigg) ds \nonumber \\
			&+ \sigma \bigg(X_s^B, \int_{\Cont([0,r])} \Prob_{X_s^{w'}} W_{[0,r]}(dw'), \alpha_s^r(w)_{|w=B_{[0,r]}} \bigg) dB_s, \quad s \geq r \nonumber\\
			&X_r^B=\xi.
		\end{align}
		We want to prove that equation \eqref{EquationXB} corresponds to equation \eqref{StateEquationFromr}; in this case, the thesis follows from uniqueness of solution to McKean-Vlasov equations.
		
		Notice that, by definition \eqref{Definitionalpharw} of \(\alpha^r(w)\), 
		\( \alpha_s^r(w)_{|w=B_{[0,t]}}=\alpha_s \)
		for every \(s \geq r\). It is thus sufficient to prove that
		\[ \Prob_{X_s^B}= \int_{\Cont([0,r])} \Prob_{X_s^{w'}} W_{[0,r]}(dw') \quad \text{for every } s \geq r.  \]
		Now, by Fubini theorem, for every \(s \geq r\) and every \(\phi \in \Cont_b(\R^d)\),
		\begin{align*}
			\int_{\R^d} \phi(z)  \bigg(\int_{\Cont([0,r])} \Prob_{X_s^{w'}} W_{[0,r]}(dw')\bigg)(dz)
			&= \int_{\Cont([0,r])}  \ExpVal[\phi(X_s^{w'})] W_{[0,r]}(dw') \\
			&=\ExpVal \bigg[ \phi(X_s^{B}) \bigg] = \int_{\R^d} \phi(z) \Prob_{X_s^{B}}(dz)
		\end{align*}
		by conditional independence (as \(X^w\) is independent of \(B_{[0,t]}\))
		and the thesis is satisfied.
	\end{proof}

	\begin{Osservazione}
		Notice that, under the assumptions of Lemma \ref{LemmaWellPosednessofDisintegrationEquation}, by Fubini Theorem 
		\begin{align}
			\biggl| \biggr|\int_{\Cont([0,r])} \Prob_{X_v^{w'}} W_{[0,r]}(dw')\biggl| \biggr|_2^2 &= \int_{\Cont([0,r])}  ||\Prob_{X_v^{w'}} ||_2^2 W_{[0,r]}(dw') = \nonumber \\ &\int_{\Cont([0,r])} \ExpVal [|X_v^{w'}|^2] W_{[0,r]}(dw')
			\label{SecondMomentofIntP}
		\end{align}
		As a consequence, applying Lemma \ref{LemmaDisintegrationandCompositionBgivesX} in \eqref{SecondMomentofIntP} and using estimate  \eqref{InitialConditionEstimate}, we deduce that
		\begin{align} \label{EstimateforXw}
			\ExpVal[ |X_s^w|^2 ] \leq  \bigg(h(s, ||\xi||_{L^2}^2)+6M^2 \int_{r}^{s} h(s, ||\xi||_{L^2}^2) e^{(12M^2 +1)(v-r)} dv\bigg) e^{(6M^2+1)(s-r)}
		\end{align}
		for a suitable function \(h\) linear and increasing in both arguments.
		If \(f\) and \(\beta\) satisfy (\nameref{HpGain}), for every fixed \(w \in \Cont([0,r])\) we can introduce the following gain functional:
		\begin{equation}
			\label{DefinitiontildeJ}
			J^w(r,\xi,\alpha):= \ExpVal \bigg[ \int_{r}^{+ \infty} e^{-\beta(s-r)} f(X_s^{w}, \int_{\Cont([0,r])} \Prob_{X_s^{w'}} W_{[0,r]}(dw'), \alpha_s^r(w)) dr  \bigg],
		\end{equation}
		which is well defined by standard estimates, relying essentially on It\^o formula for functions of probability measures (see Theorem 4.16 and Remark 4.17 in \cite{CossoExistence}) and on Lemma \ref{LemmaDisintegrationandCompositionBgivesX}, provided that \(\beta > 18M^2 + 2\).
		On the other hand, if \(f\) satisfies (\nameref{HpGainAlternative}), then \(J^w\) is well defined for any \(\beta>0\). \qed
	\end{Osservazione}

	\subsubsection{Proof of the time invariance property}
	
	In order to prove Theorem \ref{ThmValueOverSmallerClassControls} we
	need to establish some  continuity property of the gain functional \(J\).
	
	\begin{Proposizione}[Right Continuity of \(J\)]
		\label{ThmContinuityofJwrtTime}
		Assume that hypotheses (\nameref{HpCoefficients}), (\nameref{HpGain}) (or (\nameref{HpGainAlternative})) and (\nameref{HpContf}) are satisfied. Fix an initial condition \(\xi \in L^2(\Omega,\GG, \Prob)\) and a time \(t \geq 0\). Then, the function \((u, \alpha) \rightarrow J(u,\xi,\alpha)\)
		is right continuous at \(t\) in the variable \(u\) uniformly with respect to \(\alpha \in \Adm\), i.e.\ for every \(\epsilon>0\) there exists a \(\zeta=\zeta(t, \epsilon,\xi)\) such that, if \(u \in (t, t+\zeta)\), then
		\[ |J(t,\xi,\alpha)-J(u,\xi,\alpha)| < \epsilon.  \]
	\end{Proposizione}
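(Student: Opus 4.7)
The plan is to decompose the difference $J(t,\xi,\alpha)-J(u,\xi,\alpha)$ for $u>t$ close to $t$ into three pieces $I_1+I_2+I_3$ corresponding to the additional integration range, the mismatched discount factor, and the shift of the initial time of the state process, and to control each uniformly in $\alpha\in\Adm$. Specifically, I would set
\[
I_1 = \ExpVal\bigg[\int_t^u e^{-\beta(s-t)} f(X_s^{t,\xi,\alpha},\Prob_{X_s^{t,\xi,\alpha}},\alpha_s)\,ds\bigg],
\]
\[
I_2 = \ExpVal\bigg[\int_u^{+\infty}\bigl(e^{-\beta(s-t)}-e^{-\beta(s-u)}\bigr) f(X_s^{t,\xi,\alpha},\Prob_{X_s^{t,\xi,\alpha}},\alpha_s)\,ds\bigg],
\]
\[
I_3 = \ExpVal\bigg[\int_u^{+\infty} e^{-\beta(s-u)} \bigl[f(X_s^{t,\xi,\alpha},\Prob_{X_s^{t,\xi,\alpha}},\alpha_s)-f(X_s^{u,\xi,\alpha},\Prob_{X_s^{u,\xi,\alpha}},\alpha_s)\bigr]\,ds\bigg].
\]

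For $I_1$ and $I_2$ I would use the quadratic growth (H5) together with the uniform second moment bound \eqref{InitialConditionEstimate}. The elementary inequality $|e^{-\beta(s-t)}-e^{-\beta(s-u)}|\leq\beta(u-t)e^{-\beta(s-u)}$, combined with (H6) $\beta>18M^2+2>12M^2+1$, guarantees that both integrals converge absolutely uniformly in $\alpha$ and are $O(u-t)$. Under the alternative set (\nameref{HpGainAlternative}), $|f|\leq C_f$ makes the same bounds immediate for any $\beta>0$.

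The main term is $I_3$, which is exactly where (\nameref{HpContf}) enters. I would use the local Hölder continuity of $f$ and the contraction $W_2^2(\Prob_X,\Prob_Y)\leq\ExpVal[|X-Y|^2]$, so that after applying Hölder's inequality with exponents $(2/\gamma_i,2/(2-\gamma_i))$, $i=1,2$, the task reduces to controlling $\ExpVal[|X_s^{t,\xi,\alpha}-X_s^{u,\xi,\alpha}|^2]$. By the flow property (Lemma \ref{FlowLemma}) one has $X_s^{t,\xi,\alpha}=X_s^{u,X_u^{t,\xi,\alpha},\alpha}$, and the standard Lipschitz-in-initial-condition stability for McKean-Vlasov SDEs (Itô isometry, (H2), Gronwall, together with the contraction $W_2^2(\Prob_X,\Prob_{X'})\leq\ExpVal[|X-X'|^2]$ used to close the coupling) yields $\ExpVal[|X_s^{t,\xi,\alpha}-X_s^{u,\xi,\alpha}|^2]\leq C_1 e^{C_2(s-u)}\ExpVal[|X_u^{t,\xi,\alpha}-\xi|^2]$, while Itô isometry and (H3) give $\ExpVal[|X_u^{t,\xi,\alpha}-\xi|^2]\leq C_3(u-t)$ for $u-t\leq 1$, uniformly in $\alpha$. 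Inserting \eqref{InitialConditionEstimate} into the remaining growth factor, the integrand of $I_3$ is pointwise dominated by $C(u-t)^{(\gamma_1\wedge\gamma_2)/2}e^{(C'-\beta)(s-u)}$ for explicit constants $C,C'$ depending only on $L,M,H$ and $||\xi||_{L^2}$.

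The hard part will be ensuring that the effective exponential rate $C'$ produced by the Hölder interpolation is strictly smaller than $\beta$: the reinforced discount condition (H6), $\beta>18M^2+2$ (the need for which is already anticipated in the remark following the definition of $J$), is precisely what makes this hold after the worst-case optimisation over $\gamma_1,\gamma_2\in(0,1]$. Once $\beta-C'>0$ is verified, the improper integral converges uniformly in $\alpha$ and $|I_3|\leq C(u-t)^{(\gamma_1\wedge\gamma_2)/2}$, which combined with the bounds on $I_1$ and $I_2$ yields a modulus of right continuity $\zeta=\zeta(t,\epsilon,\xi)$ independent of $\alpha$. Under (\nameref{HpGainAlternative}) no such threshold on $\beta$ is required: given $\epsilon>0$ one first chooses $T_0$ large so that the tail $\int_{u+T_0}^{+\infty}e^{-\beta(s-u)}\cdot 2C_f\,ds\leq\epsilon/2$, and then applies the Hölder estimate on the compact interval $[u,u+T_0]$, where exponential blow-ups are harmless.
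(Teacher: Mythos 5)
Your decomposition and the bounds for $I_1$ and $I_2$ are fine, but the treatment of $I_3$ under the main assumptions $(\boldsymbol{H_{f,\beta}})$ has a genuine gap, and it is exactly at the point you flag as "the hard part''. After the flow property and the Gronwall stability estimate, the factor $e^{C_2(s-u)}$ carries a rate $C_2$ determined by the Lipschitz constant $L$ of (H2) (that is what closes the coupling through $W_2^2(\Prob_X,\Prob_Y)\leq \ExpVal[|X-Y|^2]$), while the growth factor coming from \eqref{InitialConditionEstimate} carries the rate $12M^2+1$. After the H\"older interpolation your integrand decays like $e^{(C'-\beta)(s-u)}$ with $C'$ at least of order $\tfrac{\gamma_1}{2}C_2(L)+\tfrac{2-\gamma_1}{2}(12M^2+1)$ (and similarly for the $\gamma_2$ term). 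Assumption (H6), $\beta>18M^2+2$, involves only the growth constant $M$ and gives no control whatsoever on $L$: since $L$ and $M$ are unrelated (think of a bounded but very steep drift, where $M$ is small and $L$ is huge), the inequality $\beta>C'$ can fail, the improper integral need not converge, and the claimed bound $|I_3|\leq C(u-t)^{(\gamma_1\wedge\gamma_2)/2}$ is not justified. Incidentally, the remark after the definition of $J$ that you invoke refers to the well-posedness of the disintegrated functional $J^w$ in Section \ref{SectionEquivalentFormulationControlProblem}, not to this interpolation, so (H6) was never designed to absorb a Lipschitz-driven rate.

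The repair is exactly the truncation you reserve for the bounded case, and it works under $(\boldsymbol{H_{f,\beta}})$ as well: first cut the time axis at $T=T(\epsilon)$, bounding the tail of $I_3$ (and of $I_2$) by estimating the two $f$-terms separately through the quadratic growth (H5) and \eqref{InitialConditionEstimate}; this only needs $\beta>12M^2+1$, which (H6) already gives, and the cutoff $T$ is independent of $\alpha$ and of $u\in(t,t+1)$. Then apply $(\boldsymbol{H_{f}}_{cont})$ together with the stability estimate \eqref{EstimateDifferentInitialTimes} only on the compact interval $[u,T]$, where the constants $e^{C_2(T-u)}$ and $e^{(12M^2+1)T}$ are harmless fixed numbers. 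This is precisely the paper's route: it splits off the interval $[t,u]$, writes the remaining term as $I_T+R_T$, disposes of $R_T$ by absolute bounds, and handles $I_T$ via the function $\psi(r,x,\mu,a)=e^{\beta r}f(x,\mu,a)$, H\"older's inequality, and the two finite-horizon estimates \eqref{InitialConditionEstimateonuT} and \eqref{EstimateDifferentInitialTimes}. With that modification your argument yields the desired modulus $\zeta=\zeta(t,\epsilon,\xi)$ uniformly in $\alpha\in\Adm$.
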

	
	We postpone the proof  to Appendix \ref{AppendixEquaivalenctFormulation} and we present here the proof of the main result of this section.

	\begin{proof}[Proof of Theorem \ref{ThmValueOverSmallerClassControls}]
		Fix an initial condition \(\xi \in L^2(\Omega, \GG, \Prob)\) and a time \(t \geq 0\). First notice that \(\Adm_t \subseteq \Adm\); therefore,
		\[ V_t(t, \xi) \leq V(t,\xi).  \] 
		For the converse inequality, consider a time \(u > t\). By Proposition \ref{ThmContinuityofJwrtTime}, we can say that for any \(\epsilon>0\) there exists a \(\delta>0\) such that
		\begin{equation}
			\label{ProofThmSmallerClassControls2}
			|J(t,\xi,\alpha')-J(u,\xi,\alpha')|< \epsilon \quad \forall \alpha' \in \Adm,
		\end{equation}
		provided that \(u \in (t, t+\delta)\). As \(\Adm_t \subseteq \Adm\), \eqref{ProofThmSmallerClassControls2} clearly holds even if we substitute \(\Adm\) with \(\Adm_t\). Assume for the moment that the following claim holds: 
		\begin{align}
			\label{ProofThmSmallerClassControlsCLAIM}
			\text{For every } u>t \text{ and every fixed } &\alpha \in \Adm  \text{ there exists a control }  \alpha'=\alpha'_{\alpha,u} \in \Adm_t \nonumber \\
			\text{ such that } &J(u,\xi,\alpha')=J(u,\xi,\alpha).
		\end{align}
		Now fix a time instant \(u \in (t, t+ \delta)\) (with \(\delta\) as in equation \eqref{ProofThmSmallerClassControls2}); for a fixed control \(\alpha \in \Adm\) choose a control  \(\alpha'=\alpha'_{\alpha,u} \in \Adm_t\) as in equation \eqref{ProofThmSmallerClassControlsCLAIM} and compute
		\begin{align*}
			|J(t,\xi,\alpha)-J(t,\xi,\alpha')| &\leq |J(t,\xi,\alpha)-J(u,\xi,\alpha)| + |J(u,\xi,\alpha)-J(u,\xi,\alpha')| \\
			&+ |J(u,\xi,\alpha')-J(t,\xi,\alpha')| < 2 \epsilon \quad \text{(by \eqref{ProofThmSmallerClassControls2} and \eqref{ProofThmSmallerClassControlsCLAIM}).} 
		\end{align*}
		Therefore, for any \(\alpha \in \Adm\) there exists a control \(\alpha' \in \Adm_t\) such that
		\begin{align*}
			J(t, \xi, \alpha) &\leq 2 \epsilon + J(t, \xi, \alpha') 
			\leq 2 \epsilon + V_t(t,\xi).
		\end{align*}
		Then, taking the supremum over the set of feasible controls \(\Adm\) and	by the arbitrariness of \(\epsilon>0\), we conclude that
		\[ V(t, \xi) \leq V_t(t,\xi).  \]
		It is thus sufficient to prove that \eqref{ProofThmSmallerClassControlsCLAIM} holds true.
		The proof is based on the idea that, with a time \(u\) ``close enough'' to \(t\), it is possible to map any trajectory on the time interval \([0,u]\) into a trajectory on the ``small interval'' \([t,u]\); as a consequence, the additional source of randomness given by \(B_{[0,u]}\) can be reproduced by means of \((B_s^{(t)})_{s \in [t,u]}\) and does not improve the value obtained optimizing over \(\Adm\).
		
		We introduce the following ``scaling'' function, which maps every continuous path defined on \([t,u]\) to a continuous path defined on \([0,u]\):
		\begin{align}
			\phi: \Cont([t,u]) &\rightarrow \Cont([0,u]) \nonumber \\
			v=(v(s))_{s \in [t,u]} &\rightarrow \phi(v)=w=(w(s))_{s \in [0,u]}: \nonumber \\
			& w(s)= v\bigg( t+s \frac{u-t}{u} \bigg) \sqrt{\frac{u}{u-t}} \quad \text{for every } s \in [0,u].
		\end{align}
		For any \(\tilde{B}\) standard Brownian motion on \([t,u]\), we can easily verify that \(\tilde{B}'\) such that \(\tilde{B}'(\omega)=\phi(\tilde{B}(\omega))\) is a standard Brownian motion on \([0,u]\). Therefore, the push-forward of \(W_{[t,u]}\) through \(\phi\) corresponds to \(W_{[0,u]}\).
		Now consider equation \eqref{DisintegrationEquation} with initial condition \(\xi\) at time \(u\), i.e.
		\begin{align}
			\label{DisintegrationEquationfromu}
			dX_s^w&= b \bigg(X_s^w, \int_{\Cont([0,u])} \Prob_{X_s^{w'}} W_{[0,u]}(w'), \alpha_s^u(w)\bigg) ds \nonumber \\
			&+ \sigma \bigg(X_s^w, \int_{\Cont([0,u])} \Prob_{X_s^{w'}} W_{[0,u]}(w'), \alpha_s^u(w) \bigg) dB_s, \quad X_u^w=\xi
		\end{align}
		for every \(s \geq u\) and \(w \in \Cont([0,u])\). For every \(w \in \Cont([0,u])\), consider \(v \in \Cont([t,u])\) such that \(\phi(v)=w\); such \(v\) exists because \(\phi\) is invertible. We introduce the notation
		\begin{equation}
			\label{DefinitionofXv}
			X^v:=X^{\phi(v)}
		\end{equation}
		to denote the solution to equation \eqref{DisintegrationEquationfromu} labelled by \(\phi(v)\) and \begin{equation}
			\gamma_s^u(v):= \alpha_s^u(\phi(v)), \quad s \geq u
			\label{Definitionofgammauv}
		\end{equation}
		for the associated control. By the previous observation about the push-forward of the Wiener measure, we can see that
		\begin{equation}
			\label{PushForwardWienerMeasure}
			\int_{\Cont([0,u])} \Prob_{X_s^{w'}} W_{[0,u]}(dw')= \int_{\Cont([t,u])} \Prob_{X_s^{v'}} W_{[t,u]}(dv').
		\end{equation}
		We deduce that \( (X^v)_{v \in \Cont([t,u])} \) solves the system of equations
		\begin{align}
			\label{DisintegrationEquationfromuRewritten}
			dX_s^v&= b \bigg(X_s^v, \int_{\Cont([t,u])} \Prob_{X_s^{v'}} W_{[t,u]}(v'), \gamma_s^u(v)\bigg) ds \nonumber \\
			&+ \sigma \bigg(X_s^v, \int_{\Cont([t,u])} \Prob_{X_s^{v'}} W_{[t,u]}(v'), \gamma_s^u(v) \bigg) dB_s, \quad X_u^v=\xi
		\end{align}
		for every \(s \geq u\) and for every \(v \in \Cont([t,u])\). We now compose \(X^v\) with \(B_{[t,u]}:=(B_s^{(t)})_{t \leq s \leq u}\), which is independent of \(X^v\). Call
		\( \bar{X}:=X^v_{|v=B_{[t,u]}}  \)
		and 
		\( \bar{\gamma}:= \gamma^u(B_{[t,u]}); \)
		by the definition \eqref{Definitionofgammauv} of \(\gamma^u\), we have
		\(  \bar{\gamma}_s= \underline{\gamma}(s, \phi(B_{[t,u]}) \otimes_u B^{(u)}, U) \)
		for a suitable measurable function \(\underline{\gamma}\). As a consequence, our control \(\bar{\gamma} \in \Adm_t\) because it is a measurable function of \(B_{[t,u]}\), \(B^{(u)}\) and \(U\). We also notice that, repeating the same reasoning as in the proof of Lemma \ref{LemmaDisintegrationandCompositionBgivesX},
		\begin{equation}
			\label{DisintegrationofLawofbarX}
			\int_{\Cont([t,u])} \Prob_{X_s^v} W_{[t,u]}(dv) = \Prob_{\bar{X}_s},
		\end{equation}
		i.e.\ \(\bar{X}\) is the unique solution to equation
		\begin{equation}
			\label{EquationofbarX}
			d\bar{X}_s= b \bigg(\bar{X}_s, \Prob_{\bar{X}_s}, \bar{\gamma}_s \bigg) ds + \sigma \bigg(\bar{X}_s, \Prob_{\bar{X}_s}, \bar{\gamma}_s \bigg) dB_s, \quad \bar{X}_u=\xi,
		\end{equation}
		which is a McKean-Vlasov controlled equation of the form of equation \eqref{StateEquationFromr} with initial time \(u\); by the property of uniqueness of solutions to McKean-Vlasov SDEs, \(\bar{X}=X^{u, \xi, \bar{\gamma}}\).
		We now compute the gain associated with this dynamics: recalling \eqref{DefinitionofXv}, \eqref{Definitionofgammauv}, \eqref{PushForwardWienerMeasure} and the definition of push-forward, by conditional independence and \eqref{DisintegrationofLawofbarX},
		\begin{align*}
			&J(u, \xi, \bar{\gamma})\\
			&= \int_{\Cont([t,u])} \ExpVal \bigg[ \int_{u}^{+ \infty} e^{-\beta (s-u)} f( X_s^v, \int_{\Cont([t,u])} \Prob_{X_s^v} W_{[t,u]}(dv), \gamma_s^u(v)) ds \bigg] W_{[t,u]}(dv)  \\
			&= \int_{\Cont([0,u])} \ExpVal \bigg[ \int_{u}^{+ \infty} e^{-\beta (s-u)} f(X_s^{w}, \int_{\Cont([0,u])} \Prob_{X_s^{w'}} W_{[0,u]}(dw'), \alpha_s^u(w) ) ds \bigg] W_{[0,u]}(dw) \\
			&= \ExpVal \bigg[ \int_{u}^{+ \infty} e^{-\beta (s-u)} f(X_s^{u, \xi, \alpha}, \Prob_{X_s^{u,\xi,\alpha}}, \alpha_s) ds \bigg]= J(u,\xi, \alpha),
		\end{align*}
		where in the second-to-last equality we applied Lemma \ref{LemmaDisintegrationandCompositionBgivesX} and equation \eqref{DisintegrationEquationfromu}.
	\end{proof}

	\begin{Osservazione}
		The basic arguments in the 
		previous proof as well as in Example \ref{CounterExample}
		are based on a randomization procedure, where the source of noise was derived from the trajectory of the Brownian motion  preceding the initial time.  
		One could randomize in a similar way by means of the independent random variable  \(U\). The previous proof has the advantage that it applies to situations where the initial condition is deterministic and the \(\sigma\)-algebra
		\(\GG\) can be trivial. \qed
	\end{Osservazione}
	By Proposition \ref{VaryinginTimeTimeInvarianceProperty} and Theorem \ref{ThmValueOverSmallerClassControls} we immediately conclude the following.
	\begin{Teorema}[Time invariance property]
		\label{TimeInvarianceProperty}
		Suppose that assumptions (\nameref{HpCoefficients}), (\nameref{HpGain}) (or (\nameref{HpGainAlternative})) and (\nameref{HpContf}) are satisfied. Let \(\xi \in L^2(\Omega, \GG, \Prob).\) Then,
		\begin{equation}
			\label{EquationTimeInvariance}
			V(0, \xi)=V(t,\xi) \qquad \forall t \geq 0,
		\end{equation}
		i.e.\ the value associated to a \(\GG\)-measurable initial condition does not depend on the initial time.
	\end{Teorema}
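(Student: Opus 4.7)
The plan is simply to chain the two preceding results. Proposition \ref{VaryinginTimeTimeInvarianceProperty} gives the identity $V(0,\xi) = V_t(t,\xi)$ for every $t\geq 0$ and every $\xi \in L^2(\Omega,\GG,\Prob)$, while Theorem \ref{ThmValueOverSmallerClassControls} asserts, under the additional continuity assumption (\nameref{HpContf}), that $V_t(t,\xi) = V(t,\xi)$. Composing these two equalities yields $V(0,\xi)=V(t,\xi)$, which is exactly the time invariance property. The conclusion of the proof is therefore the one-line chain
\begin{equation*}
V(0,\xi) \;=\; V_t(t,\xi) \;=\; V(t,\xi).
\end{equation*}

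To carry this out I would first invoke Proposition \ref{VaryinginTimeTimeInvarianceProperty}, which is the easier of the two ingredients and relies purely on a time-shift of the Brownian noise: given any $\alpha\in\Adm$ written in Doob form $\alpha_s=\underline\alpha(s,B_{\cdot\wedge s},U)$, one produces $\bar\alpha\in\Adm_t$ by substituting $B^{(t)}$ for $B$, and Lemma \ref{StationaritySDEs} transfers the equality in law from noise-and-control to the pair (state, control), hence to the value of $J$. The symmetric construction starting from $\gamma\in\Adm_t$ gives the reverse inequality, so the two suprema coincide.

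Next I would invoke Theorem \ref{ThmValueOverSmallerClassControls}. The inclusion $V_t(t,\xi)\leq V(t,\xi)$ is trivial because $\Adm_t\subseteq\Adm$, and the reverse inequality is the substantive part: for every $\alpha\in\Adm$ and every $u>t$ sufficiently close to $t$ one builds $\alpha'\in\Adm_t$ with $J(u,\xi,\alpha')=J(u,\xi,\alpha)$, combining the disintegration formula of Proposition \ref{LemmaDisintegrationandCompositionBgivesX} with a deterministic rescaling of paths from $[0,u]$ into $[t,u]$, and then uses the right-continuity of $J(\cdot,\xi,\alpha)$ at $t$ uniformly in $\alpha$ (Proposition \ref{ThmContinuityofJwrtTime}, which in turn exploits (\nameref{HpContf})) to let $u\to t^+$ and recover the estimate at initial time $t$.

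The main obstacle is not in this final splicing but in the second of the two ingredients. Example \ref{CounterExample} shows that without the continuity assumption (\nameref{HpContf}) the identity $V_t(t,\xi)=V(t,\xi)$ can genuinely fail, because the nonlinear dependence of the running gain on the state law makes it impossible to reproduce the contribution of $B_{[0,t]}$ using only $B^{(t)}$ and $U$; hence any simpler route that attempts to bypass Theorem \ref{ThmValueOverSmallerClassControls} and directly compare the suprema over $\Adm$ at different initial times is blocked by the same obstruction.
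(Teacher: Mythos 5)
Your proposal is correct and matches the paper's own argument exactly: the theorem is stated as an immediate consequence of Proposition \ref{VaryinginTimeTimeInvarianceProperty} and Theorem \ref{ThmValueOverSmallerClassControls}, giving the chain \(V(0,\xi)=V_t(t,\xi)=V(t,\xi)\). Your additional remarks on the role of (\nameref{HpContf}) and Example \ref{CounterExample} accurately reflect why the second ingredient is the substantive one.
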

	
	\section{Continuity of the Value Function and Law Invariance Property}
	\label{SectionPropertiesVF}
	In this section, we discuss the properties of the lifted value function.
	\begin{Proposizione}\label{ThmGrowthConditionsofV}
		Under assumptions (\nameref{HpCoefficients}) and (\nameref{HpGain}), for any \(t \geq 0\) the lifted value function \(V\) at time \(t\) satisfies a quadratic growth condition in \(\xi\), i.e.\ there exists a positive constant \(C\) such that
		\begin{equation}
			\label{QuadraticGrowthofV}
			|V(t,\xi)| \leq C (1+||\xi||_{L^2}^2)
		\end{equation}
		for every \(\xi \in L^2(\Omega,\FF_t,\Prob)\).
		On the other hand, under assumptions (\nameref{HpCoefficients}) and (\nameref{HpGainAlternative}), the lifted value function \(V\) is bounded, i.e.\ there exists a constant \(C >0\) such that \(|V(t,\xi)| \leq C\)
		for every \(t \geq 0\) and \(\xi \in L^2(\Omega,\FF_t,\Prob)\).
	\end{Proposizione}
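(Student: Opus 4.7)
The plan is to estimate the absolute value of \(V(t,\xi)\) by taking the supremum over \(\alpha \in \Adm\) of the absolute value of the gain functional \(J(t,\xi,\alpha)\), and then to bound \(|J(t,\xi,\alpha)|\) uniformly in the control by combining the growth assumption on \(f\) with the moment estimate \eqref{InitialConditionEstimate} for the controlled state process.

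In the first case, assume (\nameref{HpCoefficients}) and (\nameref{HpGain}). By the triangle inequality inside the expectation and Tonelli's theorem, together with (H5),
\begin{align*}
|J(t,\xi,\alpha)| \leq K\int_{t}^{+\infty} e^{-\beta(s-t)} \bigl(1 + \ExpVal[|X_s^{t,\xi,\alpha}|^2] + ||\Prob_{X_s^{t,\xi,\alpha}}||_2^2\bigr)\,ds.
\end{align*}
Since \(||\Prob_{X_s^{t,\xi,\alpha}}||_2^2 = \ExpVal[|X_s^{t,\xi,\alpha}|^2]\), I would use the a priori bound \eqref{InitialConditionEstimate}, change variables to \(u = s-t\), and obtain
\begin{align*}
|J(t,\xi,\alpha)| \leq K\int_{0}^{+\infty} e^{-\beta u}\Bigl[1 + 2\bigl(||\xi||_{L^2}^2 + 6M^2 u\bigr) e^{(12M^2+1)u}\Bigr]\,du.
\end{align*}
By (H6) we have \(\beta > 18M^2 + 2 > 12M^2 + 1\), so the integral is finite and can be evaluated explicitly in terms of \(\beta\) and \(M\); it is of the form \(C_1 + C_2 ||\xi||_{L^2}^2\) for constants \(C_1, C_2\) depending only on \(K\), \(M\) and \(\beta\). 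Taking the supremum over \(\alpha \in \Adm\) yields \eqref{QuadraticGrowthofV} with \(C := \max(C_1, C_2)\).

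In the second case, assume (\nameref{HpCoefficients}) and (\nameref{HpGainAlternative}). The argument becomes essentially trivial: by (H5)' and (H6)',
\begin{align*}
|J(t,\xi,\alpha)| \leq \int_{t}^{+\infty} e^{-\beta(s-t)} C_f\,ds = \frac{C_f}{\beta},
\end{align*}
and taking the supremum over \(\alpha \in \Adm\) gives \(|V(t,\xi)| \leq C_f/\beta\), uniformly in \(t\) and \(\xi\). No substantial obstacle is expected in either case; the only point that requires care is verifying that (H6) is strong enough to ensure convergence of the exponential-times-polynomial integral arising from the moment bound, and this is immediate from the inequality \(\beta > 12M^2+1\) implied by (H6).
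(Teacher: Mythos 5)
Your proposal is correct and follows essentially the same route as the paper: bound \(|V(t,\xi)|\) by \(\sup_{\alpha\in\Adm}|J(t,\xi,\alpha)|\), use (H5) together with the moment estimate \eqref{InitialConditionEstimate} and the identity \(||\Prob_{X_s^{t,\xi,\alpha}}||_2^2=\ExpVal[|X_s^{t,\xi,\alpha}|^2]\) to get a bound of the form \(C(1+||\xi||_{L^2}^2)\) uniformly in \(\alpha\) (convergence guaranteed since \(\beta>12M^2+1\)), and in the bounded case conclude \(|V|\leq C_f/\beta\). Nothing to change.
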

	\begin{proof}
		Firstly, notice that
		\(|V(t,\xi)| \leq \sup_{\alpha \in \Adm} |J(t,\xi,\alpha)|;\)
		it is thus sufficient to prove the desired bounds for the functional \(J(t,\cdot,\alpha)\) uniformly with respect to \(\alpha\). If assumptions (\nameref{HpCoefficients}) and (\nameref{HpGain}) are satisfied,
		\begin{align*}
			|J(t,\xi,\alpha)|
			& \leq \ExpVal \bigg[ \int_{t}^{+ \infty} K e^{-\beta (s-t)} (1+|X_s^{t,\xi,\alpha}|^2+||\Prob_{X_s^{t,\xi,\alpha}}||_2^2) ds \bigg] \quad \text{(by \eqref{InitialConditionEstimate})} \\
			& \leq \frac{K}{\beta} + \int_{t}^{+ \infty} 2K e^{(-\beta+12M^2 +1) (s-t)} \bigg(|| \xi ||_{L^2}^2 + 6M^2 (s-t) \bigg) ds \\
			& \leq C(1 + || \xi ||_{L^2}^2)
		\end{align*}
		where \(C=C(K,M,\beta)\) is a positive and finite constant independent of \(\alpha\).
		
		On the other hand, if Assumptions (\nameref{HpCoefficients}) and (\nameref{HpGainAlternative}) hold, the thesis easily follows from the boundedness of the function \(f\).
	\end{proof}
	
	From now on, we always assume that (\nameref{HpCoefficients}), (\nameref{HpGain}) (or (\nameref{HpGainAlternative})) and (\nameref{HpContf}) are satisfied.
	\begin{Proposizione} \label{ThmContinuityoftheValueFunction}
		The lifted value function \(V\) is right continuous with respect to time, i.e.\, for any \(t \geq 0\) and \(\xi \in L^2(\Omega, \FF_t, \Prob)\),  \(V(u, \xi) \rightarrow V(t,\xi)\) when \(u \rightarrow t^+\).
		Moreover, \(V\) is continuous with respect to the initial condition, i.e.\: for every  \(t \geq 0\), \(\xi \in L^2(\Omega, \FF_t, \Prob)\), \(V(t, \eta) \rightarrow V(t,\xi)\) when \(\eta \rightarrow \xi\) in \(L^2(\Omega, \FF_t, \Prob)\).
	\end{Proposizione}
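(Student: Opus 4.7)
The plan is to handle the two continuity statements separately. For right continuity in time, the work is essentially done by Proposition \ref{ThmContinuityofJwrtTime}: given $\epsilon > 0$, the $\zeta$ produced there makes $|J(t,\xi,\alpha) - J(u,\xi,\alpha)| < \epsilon$ hold uniformly in $\alpha \in \Adm$ for every $u \in (t, t+\zeta)$. Taking the supremum over $\alpha$ on both sides of the two-sided inequality immediately yields $|V(t,\xi) - V(u,\xi)| \leq \epsilon$, so no further argument is required.

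For continuity in the initial condition, the plan is to show that
\[ \sup_{\alpha \in \Adm} |J(t,\eta,\alpha) - J(t,\xi,\alpha)| \to 0 \quad \text{as } \eta \to \xi \text{ in } L^2(\Omega, \FF_t, \Prob), \]
from which $V(t,\eta) \to V(t,\xi)$ follows by the same supremum argument. I would split the integral defining $J$ into a finite part $[t, T]$ and a tail $[T, +\infty)$. For the tail, combining (H5), the estimate \eqref{InitialConditionEstimate} (using $\|\Prob_{X_s}\|_2^2 = \ExpVal[|X_s|^2]$), and the fact that $\beta - 12M^2 - 1 > 0$ (implied by (H6)), one obtains
\[ \ExpVal\Bigg[\int_T^{+\infty} e^{-\beta(s-t)} |f(X^{t,\eta,\alpha}_s, \Prob_{X^{t,\eta,\alpha}_s}, \alpha_s)| \, ds\Bigg] \leq C_1 (1+\|\eta\|_{L^2}^2)\, e^{-(\beta - 12M^2 - 1)T} \]
uniformly in $\alpha$. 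Since $\|\eta_n\|_{L^2}$ is bounded when $\eta_n \to \xi$, choosing $T$ large enough makes the tails of both $J(t,\eta_n,\alpha)$ and $J(t,\xi,\alpha)$ uniformly smaller than $\epsilon/2$; under (\nameref{HpGainAlternative}) the tail is trivially controlled by the boundedness of $f$.

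On the finite interval $[t,T]$, a standard BDG plus Gronwall argument applied to \eqref{StateEquation}, using the Lipschitz assumption (H2) together with $W_2(\Prob_{X^n_s}, \Prob_{X_s}) \leq \|X^n_s - X_s\|_{L^2}$, yields a stability bound
\[ \ExpVal\bigg[\sup_{s \in [t,T]} |X^{t,\eta_n,\alpha}_s - X^{t,\xi,\alpha}_s|^2\bigg] \leq C(T) \|\eta_n - \xi\|_{L^2}^2 \]
uniformly in $\alpha$. Plugging this into (\nameref{HpContf}) via Hölder's inequality with exponents $2/\gamma_i$ and $2/(2-\gamma_i)$, and using \eqref{InitialConditionEstimate} to control the weight factors $(1 + |X^n_s| + |X_s|)^{2-\gamma_1}$ and their measure analogue, produces
\[ \ExpVal\big[|f(X^n_s, \Prob_{X^n_s}, \alpha_s) - f(X_s, \Prob_{X_s}, \alpha_s)|\big] \leq \tilde{C}(T, \xi)\, \|\eta_n - \xi\|_{L^2}^{\gamma_1 \wedge \gamma_2} \]
for every $s \in [t,T]$, uniformly in $\alpha$. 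Integrating against $e^{-\beta(s-t)}$ over $[t,T]$ and letting $n \to \infty$ closes the finite part.

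The main obstacle is the interplay between the two halves: the stability constant $C(T)$ grows exponentially in $T$, while the tail demands $T$ large. The correct ordering is to fix $T$ first, based only on $\epsilon$ and the (bounded) $\sup_n \|\eta_n\|_{L^2}$, and only then send $n \to \infty$; this is legitimate because the discount rate in (H6) strictly dominates the growth rate $12M^2+1$ from \eqref{InitialConditionEstimate}. The McKean-Vlasov feature complicates the finite-part stability estimate only mildly, since the dependence of the coefficients on the law is absorbed into the Gronwall constant using $W_2(\Prob_X, \Prob_Y) \leq \|X - Y\|_{L^2}$.
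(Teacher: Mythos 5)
Your proposal is correct and follows essentially the same route as the paper: right continuity in time by taking suprema in the uniform-in-\(\alpha\) estimate of Proposition \ref{ThmContinuityofJwrtTime}, and continuity in the initial condition by repeating that proposition's tail/finite-interval decomposition, replacing the time-shift estimate \eqref{EstimateDifferentInitialTimes} with the Gronwall-type stability bound \(\ExpVal[|X_s^{t,\xi,\alpha}-X_s^{t,\eta,\alpha}|^2]\leq C(T)\,\ExpVal[|\xi-\eta|^2]\), which is exactly the bound the paper invokes. Your remark on the order of quantifiers (fix \(T\) via the discount margin \(\beta>12M^2+1\) before letting \(\eta\to\xi\)) is the right way to make the sketch rigorous.
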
 
	\begin{proof}
		The right continuity of \(V\) easily follows from Theorem \ref{ThmContinuityofJwrtTime}.
		The continuity with respect to the initial condition can be obtained following the lines of the proof of Theorem \ref{ThmContinuityofJwrtTime} in \ref{AppendixEquaivalenctFormulation}, using the following bound, derived from standard estimates on a bounded interval: for any \(T>0\) there exists a constant \(C(T)>0\) such that, for any initial time \(r \in [u,T]\), any two initial conditions, \(\xi, \eta \in L^2(\Omega, \FF_r, \Prob)\), and any feasible control \(\alpha \in \Adm\),
		\(\ExpVal [ |X_s^{r,\xi,\alpha}-X_s^{r,\eta,\alpha}|^2 ] \leq 3 C(T) \ExpVal [ |\xi-\eta|^2 ]\) for all \(s \in [r,T]\).
	\end{proof}
	
	In analogy with the finite horizon case, presented in \cite{CossoExistence}, we want to redefine the \textit{value function} of the problem as a real-valued function on \(\PPtwo\). This requires the so-called Law Invariance Property, stating that the lifted value function \(V\) depends on the initial condition only through its probability distribution (which motivates the choice of the expression ``lifted value function'').
	\begin{Teorema}[Law Invariance Property]
		\label{LawInvarianceTheorem}
		Fix any \(t \geq 0\) and any  \(\xi, \eta \in L^2(\Omega, \FF_t, \Prob)\) such that \(\Prob_{\eta}=\Prob_{\xi}\). Then, \(V(t,\eta)=V(t,\xi)\)
	\end{Teorema}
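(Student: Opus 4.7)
The plan is to reduce the infinite horizon law invariance to the finite horizon counterpart established in \cite{CossoExistence}, by truncating the discounted gain at a finite time $T$ and passing to the limit $T \to +\infty$ using the decay induced by the discount factor.

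For each $T > t$, I would introduce the truncated gain $J_T(t, \xi, \alpha) := \ExpVal\bigl[\int_t^T e^{-\beta(s-t)} f(X_s^{t, \xi, \alpha}, \Prob_{X_s^{t, \xi, \alpha}}, \alpha_s)\, ds\bigr]$ and its value $V_T(t, \xi) := \sup_{\alpha \in \Adm} J_T(t, \xi, \alpha)$. This is a finite horizon mean-field control problem with time-dependent running cost $(s, x, \mu, a) \mapsto e^{-\beta(s-t)} f(x, \mu, a)$, zero terminal reward, open-loop controls, and assumptions $(\nameref{HpCoefficients})$, $(\nameref{HpContf})$ that fit the framework of \cite{CossoExistence}. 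The finite horizon law invariance proved there then yields $V_T(t, \xi) = V_T(t, \eta)$ whenever $\xi, \eta \in L^2(\Omega, \FF_t, \Prob)$ share the same law.

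Next I would show $V_T(t, \xi) \to V(t, \xi)$ as $T \to +\infty$, uniformly in $\alpha \in \Adm$. Under $(\nameref{HpGain})$, the quadratic growth (H5) of $f$, the moment estimate \eqref{InitialConditionEstimate}, and the discount condition $\beta > 18M^2 + 2$ from (H6) yield $\sup_{\alpha \in \Adm} |J(t, \xi, \alpha) - J_T(t, \xi, \alpha)| \leq C \int_T^{+\infty} e^{(-\beta + 12M^2 + 1)(s-t)} (1 + \|\xi\|_{L^2}^2 + s - t)\, ds$, which decays exponentially as $T \to +\infty$; under $(\nameref{HpGainAlternative})$ the simpler estimate $|J - J_T| \leq C_f \beta^{-1} e^{-\beta(T-t)}$ is immediate from the boundedness of $f$. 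In both cases $V_T(t, \xi) \to V(t, \xi)$ pointwise; since each $V_T(t, \cdot)$ is law invariant and the property is preserved under pointwise limits, $V(t, \xi) = \lim_T V_T(t, \xi) = \lim_T V_T(t, \eta) = V(t, \eta)$.

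The main obstacle is verifying that the finite horizon law invariance result of \cite{CossoExistence} genuinely applies to our truncated problem: in particular, that the time-dependent running cost $e^{-\beta(s-t)} f$ satisfies their continuity and growth hypotheses (which is the case, since the discount factor is continuous and bounded on $[t, T]$ and the Hölder estimate of $(\nameref{HpContf})$ carries over), and that their class of admissible controls matches (or contains) our $\Adm$. Once these compatibility checks are completed, the exponential convergence $V_T \to V$ follows from the discount condition (H6) and the conclusion is routine.
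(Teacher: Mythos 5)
Your argument is correct, but it takes a different route from the paper. The paper's own proof is a direct adaptation: it re-runs the argument of Theorem 3.6 in \cite{CossoExistence} on the infinite-horizon discounted functional itself, with minor changes. You instead use that finite-horizon theorem as a black box: truncate the gain at time \(T\), observe that the truncated problem (time-dependent running gain \(e^{-\beta(s-t)}f\), zero terminal reward, same coefficients, same probabilistic setup with \(\GG\) generated by a uniform random variable independent of \(B\), same open-loop class \(\Adm\) up to restriction to \([0,T]\)) satisfies the hypotheses of \cite{CossoExistence} — the continuity required there is exactly what (\nameref{HpContf}) supplies, as the paper itself remarks, and the deterministic bounded discount factor is harmless since that framework allows time-dependent gains — then pass to the limit using the uniform-in-\(\alpha\) tail estimate \(\sup_{\alpha}|J-J_T|\to 0\), which follows from \eqref{InitialConditionEstimate} and \(\beta>12M^2+1\) (implied by (H6)), or trivially from boundedness under (\nameref{HpGainAlternative}). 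What your approach buys is that nothing from the finite-horizon proof needs to be re-verified in the infinite-horizon setting; the price is the compatibility check you flag plus the exponential tail bound, both of which go through. It is also consonant with the paper's own toolkit, since the same truncation-and-limit device is used in the proof of Theorem \ref{ThmUniqueness}. Two cosmetic points: choose a symbol other than \(V_T\) for the truncated value, since the paper already uses \(V_t\) for the value over \(\Adm_t\) and \(V_T\) for a different finite-horizon value in Section \ref{SectionUniquenessHJB}; and note explicitly that \(J_T\) depends on \(\alpha\) only through its restriction to \([t,T]\), so taking the supremum over \(\Adm\) rather than over controls on \([t,T]\) changes nothing.
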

	\begin{proof}
		The result can be proved following the lines of the proof of Theorem 3.6 in \cite{CossoExistence} with minor changes.
	\end{proof}
	
	Therefore, thanks to Theorem \ref{TimeInvarianceProperty} and Theorem \ref{LawInvarianceTheorem}, we can define the \textit{value function} \(v: \PPtwo \rightarrow \R\) as 
	\[ v(\mu)=V(0, \xi) \quad \text{for any } \mu \in \PPtwo  \]
	for every \(\xi \in L^2(\Omega, \GG, \Prob)\) such that \(\Prob_{\xi}=\mu\); furthermore, we can easily verify that \(v(\mu)=V(t,\eta)\) for any \(t \geq 0\) and \(\eta \in L^2(\Omega, \FF_t, \Prob)\) such that \(\Prob_{\eta}=\mu\).
	Consequently, the main properties of \(v\) can be deduced from the properties of the lifted value function. We start by stating the dynamic programming principle for the value function \(v\), which follows from Theorem \ref{DPPThm} with \(t=0\).
	\begin{Corollario}[DPP] \label{DPPThmv}
		For any \(s \geq 0\) and for every \(\mu \in \PPtwo\),
		\begin{equation} 
			\label{DPPvEquation}
			v(\mu) = \sup \limits_{\alpha \in \Adm} \biggl\{ \ExpVal \bigg[ \int_{0}^{s} e^{-\beta r} f(X_r^{0, \xi, \alpha}, \Prob_{X_r^{0, \xi, \alpha}}, \alpha_r) dr \bigg]  + e^{-\beta s} v(\Prob_{X_{s}^{0, \xi, \alpha}}) \biggr\}
		\end{equation}
		for any \(  \xi \in L^2(\Omega, \GG, \Prob) \) such that \(\Prob_{\xi}=\mu\).
	\end{Corollario}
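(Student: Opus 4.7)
The plan is to obtain the statement as a direct corollary of Theorem \ref{DPPThm} combined with the Time Invariance and Law Invariance properties already established. Fix $\mu \in \PPtwo$ and pick any $\xi \in L^2(\Omega, \GG, \Prob)$ with $\Prob_\xi = \mu$ (such a $\xi$ exists thanks to the richness of $\GG$, cf.\ Lemma 2.1 of \cite{CossoExistence} invoked in Section \ref{SectionProbabilisticSetting}). Apply Theorem \ref{DPPThm} at $t=0$ with this $\xi$: for every $s \geq 0$,
\[
V(0,\xi) = \sup_{\alpha \in \Adm} \biggl\{ \ExpVal\bigg[\int_0^s e^{-\beta r} f(X_r^{0,\xi,\alpha}, \Prob_{X_r^{0,\xi,\alpha}}, \alpha_r)\,dr\bigg] + e^{-\beta s} V(s, X_s^{0,\xi,\alpha}) \biggr\}.
\]
By the very definition of $v$, the left-hand side equals $v(\mu)$, so it only remains to replace the boundary term $V(s, X_s^{0,\xi,\alpha})$ by $v(\Prob_{X_s^{0,\xi,\alpha}})$.

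For the boundary term, the key observation is that $X_s^{0,\xi,\alpha} \in L^2(\Omega,\FF_s,\Prob)$ but in general is not $\GG$-measurable, so one cannot directly invoke Theorem \ref{TimeInvarianceProperty} on $X_s^{0,\xi,\alpha}$ itself. Instead, I would introduce an auxiliary $\tilde\eta \in L^2(\Omega,\GG,\Prob)$ with $\Prob_{\tilde\eta} = \Prob_{X_s^{0,\xi,\alpha}}$ (again by the richness of $\GG$) and chain the previous results: by Theorem \ref{TimeInvarianceProperty}, $V(0,\tilde\eta) = V(s,\tilde\eta)$, and by Theorem \ref{LawInvarianceTheorem}, $V(s,\tilde\eta) = V(s, X_s^{0,\xi,\alpha})$, while by the definition of $v$ we have $v(\Prob_{X_s^{0,\xi,\alpha}}) = V(0,\tilde\eta)$. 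Combining these three equalities yields $V(s, X_s^{0,\xi,\alpha}) = v(\Prob_{X_s^{0,\xi,\alpha}})$, which when substituted into the DPP for $V$ produces exactly \eqref{DPPvEquation}.

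No serious obstacle is expected: the technical content is entirely absorbed by Theorems \ref{DPPThm}, \ref{TimeInvarianceProperty}, and \ref{LawInvarianceTheorem}. The only point requiring mild care is the one just indicated, namely the use of an auxiliary $\GG$-measurable representative to bridge between the $\FF_s$-measurable state $X_s^{0,\xi,\alpha}$ and the measure-valued argument of $v$; once this bridging step is articulated, the corollary follows by pure substitution.
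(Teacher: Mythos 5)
Your proposal is correct and matches the paper's intended argument: the paper derives the corollary precisely by applying Theorem \ref{DPPThm} at \(t=0\) and using the identity \(v(\Prob_{X_s^{0,\xi,\alpha}})=V(s,X_s^{0,\xi,\alpha})\), which rests on the combination of Theorem \ref{TimeInvarianceProperty} and Theorem \ref{LawInvarianceTheorem} exactly as in your bridging step through a \(\GG\)-measurable representative. Your write-up simply makes explicit the step the paper leaves as ``we can easily verify that \(v(\mu)=V(t,\eta)\)'', so no gap remains.
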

	
	We then deduce the following result from Proposition \ref{ThmGrowthConditionsofV}.
	\begin{Corollario} \label{ThmGrowthConditionsofv}
		Under assumptions (\nameref{HpGain}), the value function \(v\) satisfies a quadratic growth condition in \(\mu\), i.e.\ there exists a positive constant \(C\) such that	\(|v(\mu)| \leq C (1+||\mu||_2^2)\)
		for every \(\mu \in \PPtwo\).
		On the other hand, if (\nameref{HpGainAlternative}) hold, the value function \(v\) is bounded.
	\end{Corollario}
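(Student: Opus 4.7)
The proof is essentially immediate from the definition of $v$ together with Proposition \ref{ThmGrowthConditionsofV}, so the plan is short.

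My plan is to fix $\mu \in \PPtwo$ and exploit the representation $v(\mu) = V(0,\xi)$ for any $\xi \in L^2(\Omega,\GG,\Prob)$ with $\Prob_\xi = \mu$. Such a $\xi$ exists by the richness condition (i) in assumption $(\boldsymbol{H_{\GG}})$ (as recalled in Section \ref{SectionProbabilisticSetting}, invoking Lemma 2.1 in \cite{CossoExistence}), so the representation is well defined and the value of the right-hand side does not depend on the choice of $\xi$ by the Law Invariance Theorem \ref{LawInvarianceTheorem}.

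For the quadratic growth case under $(\nameref{HpGain})$, I apply the bound from Proposition \ref{ThmGrowthConditionsofV} at $t=0$ to get
\[ |v(\mu)| = |V(0,\xi)| \leq C(1 + \|\xi\|_{L^2}^2). \]
The last step is the observation that $\|\xi\|_{L^2}^2 = \ExpVal[|\xi|^2] = \int_{\R^d} |x|^2\, \mu(dx) = \|\mu\|_2^2$, since $\mu = \Prob_\xi$. This gives $|v(\mu)| \leq C(1+\|\mu\|_2^2)$ with the same constant $C$ as in Proposition \ref{ThmGrowthConditionsofV}.

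For the bounded case under $(\nameref{HpGainAlternative})$, Proposition \ref{ThmGrowthConditionsofV} directly yields $|V(0,\xi)| \leq C$ uniformly, and hence $|v(\mu)| \leq C$ for every $\mu \in \PPtwo$. There is no real obstacle here; the only thing to be careful about is citing $(\boldsymbol{H_{\GG}})$ so that the choice of a representative $\xi$ with prescribed law $\mu$ is legitimate.
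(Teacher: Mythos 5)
Your proof is correct and follows exactly the route the paper intends: the corollary is deduced from Proposition \ref{ThmGrowthConditionsofV} via the representation \(v(\mu)=V(0,\xi)\) for a \(\GG\)-measurable \(\xi\) with \(\Prob_\xi=\mu\) (legitimate by \((\boldsymbol{H_{\GG}})\) and the Law Invariance Property), together with the identity \(\|\xi\|_{L^2}^2=\|\mu\|_2^2\). Nothing further is needed.
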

	
	We conclude the section studying the continuity of the value function \(v\), which will be a key result for its the viscosity property.
	\begin{Proposizione}
		\label{PropContinuityv}
		The value function \(v\) is continuous, i.e.\ for every \(\mu \in \PPtwo\) and any sequence \((\mu_n)_{n \in \N} \subseteq \PPtwo\) converging to \(\mu\) it holds that \(\lim \limits_{n \to +\infty} v(\mu_n) = v(\mu)\).
	\end{Proposizione}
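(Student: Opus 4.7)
The plan is to reduce the continuity of $v$ on $(\PPtwo, W_2)$ to the continuity of the lifted value function $V$ in the $L^2$ initial condition, which was already established in Proposition \ref{ThmContinuityoftheValueFunction}. The bridge between the two statements is the law invariance (Theorem \ref{LawInvarianceTheorem}): for any $\GG$-measurable square-integrable representative $\eta$ of a measure $\nu \in \PPtwo$, we have $v(\nu)=V(0,\eta)$. So if we can realize the convergence $\mu_n \to \mu$ at the level of $L^2$ random variables on $(\Omega,\GG,\Prob)$, the proof will be immediate.

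The first step, therefore, is the following representation lemma: given a sequence $(\mu_n)_n \subseteq \PPtwo$ with $W_2(\mu_n,\mu)\to 0$, I would construct $\xi, \xi_n \in L^2(\Omega,\GG,\Prob;\R^d)$ with $\Prob_\xi = \mu$, $\Prob_{\xi_n}=\mu_n$, and $\|\xi_n-\xi\|_{L^2}\to 0$. This is where assumption (\nameref{Notations})$(\boldsymbol{H_\GG})$ enters in a crucial way: because $\GG=\sigma(U)$ with $U\sim \mathrm{Unif}([0,1])$, the space $(\Omega,\GG,\Prob)$ is atomless, so by a standard construction (see e.g.\ Lemma 2.1 in \cite{CossoExistence} together with the inverse-CDF type argument used in Carmona--Delarue \cite{CarmonaDelarue1}, or by gluing an optimal $W_2$-coupling $\pi_n\in \Pi(\mu,\mu_n)$ with the law of $\xi$ through a regular conditional distribution measurable with respect to $\sigma(U)$) one can extract such simultaneous representatives. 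In essence, the Polish atomless structure of $(\Omega,\GG,\Prob)$ guarantees that the map from $\GG$-measurable random variables to probability measures, restricted to the $L^2$-level, is surjective onto $\PPtwo$, and the $W_2$-convergence can always be realized as $L^2$-convergence of representatives.

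Once the lemma is in place, the proof closes in one line: by law invariance,
\begin{equation*}
v(\mu_n)=V(0,\xi_n), \qquad v(\mu)=V(0,\xi),
\end{equation*}
and by Proposition \ref{ThmContinuityoftheValueFunction} (continuity of $V$ at time $0$ with respect to the initial condition in $L^2(\Omega,\GG,\Prob)$) we obtain $V(0,\xi_n)\to V(0,\xi)$, hence $v(\mu_n)\to v(\mu)$.

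I expect the main obstacle to be the representation lemma rather than the final passage to the limit. It relies on $\GG$ being ``sufficiently rich'' in the sense of assumption $(\boldsymbol{H_\GG})$: without atomlessness of $\GG$ one cannot in general simultaneously represent $\mu_n$ and $\mu$ by $L^2$-close random variables on the prescribed $\sigma$-algebra. Once that technical point is granted (and it is a by-product of the same structural assumption that already made Lemma 2.1 of \cite{CossoExistence} work in this framework), everything else is a direct application of results already proved in the paper.
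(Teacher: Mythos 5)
Your proposal is correct and follows essentially the same route as the paper: reduce to the $L^2$-continuity of the lifted value function $V$ (Proposition \ref{ThmContinuityoftheValueFunction}) via law invariance, after realizing the $W_2$-convergence $\mu_n \to \mu$ as $L^2(\Omega,\GG,\Prob)$-convergence of representatives $\xi_n \to \xi$. The only difference is that the paper simply cites Lemma 2.1 in \cite{BandiniCossoFuhrmanPhamRandomizedFiltering} for this representation step, whereas you sketch its proof using the atomless structure of $\GG$; the content is the same.
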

	\begin{proof}
		The thesis follows from Theorem \ref{ThmContinuityoftheValueFunction}, as, by Lemma 2.1 in \cite{BandiniCossoFuhrmanPhamRandomizedFiltering}, the convergence with respect to the 2-Wasserstein metric of the sequence \(\mu_n\) to \(\mu\) implies the convergence in \(L^2(\Omega, \GG, \Prob)\) of a sequence \((\xi_n)_{n \in \N} \subseteq L^2(\Omega, \GG, \Prob)\) with \(\Prob_{\xi_n}=\mu_n\) for any \(n \in \N\) to some \(\xi \in L^2(\Omega, \GG, \Prob)\) with \(\Prob_{\xi}=\mu\).
	\end{proof}
	
	\section{Hamilton-Jacobi-Bellman equation}
	\label{SectionHJB}
	\subsection{Lions Derivative and viscosity solutions to PDEs}
	\label{SectionDerivativesandViscosityTheorey}
	In this section, we are studying our optimal control problem by means of the classical approach based on Hamilton-Jacobi-Bellman equation: we aim at deriving from the Dynamic Programming Principle, Theorem \ref{DPPThmv}, a partial differential equation on the domain of the value function, in order to characterize \(v\) as its unique solution. As in many applications of control theory it is not easy or even possible to prove the regularity of the value function, we expect it to solve a PDE in the \textit{viscosity sense}.
	We first need to introduce an appropriate notion of derivative on the space \(\PPtwo\); we are using the derivative in the lifted sense of Lions (\cite{LionsCours}, see also \cite{Cardaliaguet2013}). We recall here that, given a map \(u:\PPtwo \rightarrow \R\), we call \textit{lifting of \(u\)} any function \(U:L^2(\Omega, \FF, \Prob; \R^d) \rightarrow \R\) such that \(U(\xi)=u(\Prob_{\xi})\) for any \(\xi \in L^2(\Omega, \FF, \Prob; \R^d)\).
	\begin{Definizione}
		A function \(u: \PPtwo \rightarrow \R\) is said to be first-order L-differentiable if its lifting \(U\) admits a continuous Fréchet derivative \(D_{\xi}U:L^2(\Omega, \FF, \Prob; \R^d) \rightarrow L^2(\Omega, \FF, \Prob; \R^d)\).
	\end{Definizione}
	
	\begin{Osservazione}
		By Theorem 6.5 in  \cite{Cardaliaguet2013}, if \(u\) is first-order differentiable, then there exists, for any \(\mu \in \PPtwo\), a measurable function \(\partial_{\mu}u(\mu): \R^d \rightarrow \R^d \) such that \(\Prob\)-a.s.\ \(D_{\xi}U(\xi)= \partial_{\mu}u(\mu)(\xi) \) for any \(\xi \in L^2(\Omega, \FF, \Prob; \R^d)\) with \(\Prob_{\xi}=\mu\). The function \(\partial_{\mu}u(\mu)\) (defined \(\mu\)-a.s.\ ) is called first-order Lions derivative of \(u\) at \(\mu\). \qed
	\end{Osservazione}
	
	\begin{Definizione}
		We denote by \(\Cont^{2}(\PPtwo)\) the space of continuous functions \(\phi: \PPtwo \rightarrow \R\) such that the following conditions hold: 
		\begin{enumerate}
			\item \(\phi\) is first-order L-differentiable;
			\item The map \((\mu, x) \in \PPtwo \times \R^d \rightarrow \partial_{\mu} \phi(\mu)(x) \in \R^d\) is jointly continuous.
			\item The derivative \(\partial_x \partial_{\mu} \phi: (\mu,x) \in \PPtwo \times \R^d \rightarrow \partial_x \partial_{\mu} \phi (\mu)(x) \in \R^{d \times d}\) exists and is jointly continuous.
		\end{enumerate}
	\end{Definizione} 
	\begin{Osservazione}
		This is not the only available notion of derivative with respect to a measure; we choose the derivative in the lifted sense because we defined the function \(v\) starting exactly from its lifting, the function \(V\). Moreover, this choice is consistent with a part of the existing literature (see e.g.\ \cite{CossoExistence}, \cite{CossoUniqueness}, \cite{PhamWei1}, \cite{PhamWei2}) from which this work was originally inspired. We refer to Section 5 in \cite{CarmonaDelarue1} for the definition of other notions of differentiability, which are equivalent to our definition under suitable regularity conditions (see e.g.\ Proposition 5.51 and Proposition 5.48 in \cite{CarmonaDelarue1}). \qed
	\end{Osservazione}
	
	We now introduce the set of test functions used in this paper.
	\begin{Definizione}
		We denote by \(\TestFcts(\R^d)\) the subset of \(C^2(\PPtwo)\) of functions \(\phi :\PPtwo \rightarrow \R\) such that, for some constant \(C_{\phi} \geq 0\),
		\begin{equation*}
			|\partial_{\mu} \phi(\mu)(x)| \leq C_{\phi} (1+|x|)
			\text{ and }|\partial_x \partial_{\mu} \phi (\mu)(x)| \leq C_{\phi}
		\end{equation*}
		for every \(\mu \in \PPtwo\) and \(x \in \R^d\).
	\end{Definizione}

	On the space \(\PPtwo\) endowed with the \(2\)-Wasserstein space, we introduce the following elliptic partial differential equation, which we will refer to as Hamilton-Jacobi-Bellman (HJB) equation:
	\begin{align}
		\label{HJB}
		\beta v(\mu) &= \ExpVal \bigg[ \sup \limits_{a \in A} \biggl\{ f(\xi, \mu, a ) + <b(\xi, \mu,a ), \partial_{\mu} v(\mu)(\xi)> \nonumber \\
		& + \frac{1}{2} tr \bigg(\sigma(\xi, \mu,a ) \sigma^T (\xi, \mu,a ) \partial_{x}\partial_{\mu}v(\mu)(\xi) \bigg) \biggr\} \bigg]
	\end{align}
	for \(\mu \in \PPtwo\) and any \(\xi \in L^2(\Omega, \GG, \Prob)\) with \(\Prob_{\xi}=\mu\).
	
	We want to define a suitable notion of viscosity solution to HJB equation.
	\begin{Definizione}
		A continuous function \(u:\PPtwo \rightarrow \R\) is a viscosity subsolution to equation \eqref{HJB} if, for every \(\mu \in \PPtwo\) and every \(\phi \in \TestFcts(\R^d)\) such that \(u- \phi \) attains a global maximum at \(\mu\) with value \(0\), the following inequality is satisfied:
		\begin{align}
			\label{SubsolHJB}
			0 &\leq - \beta \phi(\mu) +\ExpVal \bigg[ \sup \limits_{a \in A} \biggl\{ f(\xi, \mu, a ) + <b(\xi, \mu,a ), \partial_{\mu} \phi(\mu)(\xi)> \nonumber \\
			& + \frac{1}{2} tr \bigg(\sigma(\xi, \mu,a ) \sigma^T (\xi, \mu,a ) \partial_{x}\partial_{\mu}\phi(\mu)(\xi) \bigg) \biggr\} \bigg]
		\end{align}
		for every \(\xi \in L^2(\Omega, \GG, \Prob)\) with law \(\mu\).
	\end{Definizione}
	
	In order to define supersolutions, we underline that any function \(u:\PPtwo \rightarrow \R\) can be extended to the space \(\PPtwoA\) in a canonical way: for any \(\nu \in \PPtwoA, \) \(u(\nu):=u(\nu_1)\) where \(\nu_1\) is the marginal of \(\nu\) such that \(\nu(\cdot \times A)=\nu_1(\cdot)\). We also introduce the projection on the first \(d\) components \(\pi_d:\R^d \times A \rightarrow \R^d\) where, for any \(y=(y_1, \ldots, y_d, y_{d+1}, \ldots) \in \R^d \times A\), \(\pi_d(y)=(y_1, \ldots, y_d)\).
	\begin{Definizione} \label{DefinitionSupersol}
		A continuous function \(u:\PPtwo \rightarrow \R\) is a viscosity supersolution to equation \eqref{HJB} if, for any \(\nu \in \PPtwoA\) and for any \(\psi \in \TestFcts(\R^d \times A)\) such that \(u-\psi\) attains a minimum with value \(0\) at \(\nu\) over the space \(\PPtwoA\), the following inequality holds:
		\begin{align}
			\label{SupersolHJB}
			0 &\geq - \beta \psi(\nu) +\int_{\R^d \times A} \biggl\{ f(x, \mu, a ) + <b(x, \mu,a ), \partial_{\mu} \psi(\nu)(x,a)> \nonumber \\
			& + \frac{1}{2} tr \bigg(\sigma(x, \mu,a ) \sigma^T (x, \mu,a ) \partial_{x}\partial_{\mu}\psi(\nu)(x,a) \bigg) \biggr\} \nu(dx,da),
		\end{align}
		where \(\mu\) is the marginal distribution of \(\nu\) on \(\R^d\) and \(\partial_{\mu}\psi(\nu)(\cdot,\cdot):= \pi_d(\partial_{\nu}\psi(\nu)(\cdot,\cdot))\).
	\end{Definizione}
	
	\begin{Definizione}
		We call \(u\) a viscosity solution to equation \eqref{HJB} if it is both a supersolution and a subsolution to it.
	\end{Definizione}
	
	\begin{Osservazione}
		\label{RemarkEquivalentFormulationSupersolutions}
		A more natural definition of supersolutions can be introduced: indeed, we may define supersolutions ``à la Crandall-Lions'' to equation \eqref{HJB} as continuous functions \(u: \PPtwo \rightarrow \R\) such that, 
		for every \(\mu \in \PPtwo\) and every \(\phi \in \TestFcts(\R^d)\) such that \(u- \phi \) attains a global minimum at \(\mu\) with value \(0\), inequality \eqref{SubsolHJB} is satisfied with sign \(\geq\) instead of \(\leq\).
		As pointed out in \cite{CheungTaiQiu}, Remark 6.1, and in \cite{BayraktarCheungTaiQiu}, Remark 2.5, this definition of viscosity supersolution is not sufficient to prove uniqueness of viscosity solutions to Hamilton-Jacobi-Bellman equation. We thus prefer to define supersolutions as in Definition \ref{DefinitionSupersol}, adapting the notion presented in \cite{CheungTaiQiu} and \cite{BayraktarCheungTaiQiu} to an elliptic equation. 
		Nonetheless, under Assumptions (\nameref{HpCoefficients}), (\nameref{HpGain}) or (\nameref{HpGainAlternative}) and (\nameref{HpContf}), inequality \eqref{SupersolHJB} implies that \(u\) is also a supersolution ``à la Crandall-Lions'' (see Remark 6.1 in \cite{CheungTaiQiu})). We furthermore underline that we could prove that \(v\) is a supersolution to equation \eqref{HJB} even in the weaker Crandall-Lions sense. \qed	\end{Osservazione}
	
	\subsection{Hamilton-Jacobi-Bellman equation and value function}
	\label{SectionHJBExistence}
	We now prove that \(v\) solves the Hamilton-Jacobi-Bellman equation associated with our optimal control problem.
	\begin{Teorema}
		\label{ThmvViscositySolHJB}
		Under hypotheses (\nameref{HpCoefficients}), (\nameref{HpGainAlternative}) and (\nameref{HpContf}),
		the value function \(v\) is a viscosity solution to Hamilton Jacobi Bellman equation \eqref{HJB}.
	\end{Teorema}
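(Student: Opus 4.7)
The plan is to follow the standard DPP-plus-It\^o programme for HJB equations on the Wasserstein space: combine Corollary \ref{DPPThmv} on a short window $[0,h]$ with the It\^o formula for functions of measures (e.g.\ Theorem 4.16 in \cite{CossoExistence}) and send $h \to 0^+$. The continuity of $v$ (Proposition \ref{PropContinuityv}), the moment estimate \eqref{InitialConditionEstimate}, the H\"older hypothesis (\nameref{HpContf}), and the pointwise bounds built into $\TestFcts$ will legitimize all the passages to the limit.

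\textbf{Subsolution.} I would fix $\mu \in \PPtwo$, $\phi \in \TestFcts(\R^d)$ such that $v - \phi$ attains its global maximum $0$ at $\mu$, and $\xi \in L^2(\Omega,\GG,\Prob)$ with $\Prob_\xi = \mu$. Since $v(\mu)$ is only a supremum, for each $h \in (0,1]$ I would select an $h^2$-optimal control $\alpha^h \in \Adm$, writing $X^h := X^{0,\xi,\alpha^h}$. Using $v \leq \phi$ together with $v(\mu) = \phi(\mu)$, the DPP becomes
\[
\phi(\mu) - \ExpVal\!\left[e^{-\beta h}\phi(\Prob_{X_h^h})\right] \leq \ExpVal\!\left[\int_0^h e^{-\beta r} f(X_r^h, \Prob_{X_r^h}, \alpha_r^h)\, dr\right] + h^2.
\]
I would then apply It\^o's formula on $\PPtwo$ to the deterministic curve $r \mapsto e^{-\beta r}\phi(\Prob_{X_r^h})$, dominate the pointwise integrand by $\sup_{a\in A}\{\langle b,\partial_\mu\phi\rangle + \tfrac12 tr(\sigma\sigma^T\partial_x\partial_\mu\phi) + f\}$, divide by $h$, and let $h \to 0^+$ to recover \eqref{SubsolHJB}.

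\textbf{Supersolution.} I would fix $\nu \in \PPtwoA$ and $\psi \in \TestFcts(\R^d\times A)$ such that $v - \psi$ has global minimum $0$ at $\nu$, and write $\mu = \nu_1$. Exploiting the richness of $\GG$ granted by $(\boldsymbol{H_{\GG}})$, I would construct a $\GG$-measurable couple $(\xi,\bar a)$ of joint law $\nu$ and take the constant-in-time control $\alpha_s \equiv \bar a \in \Adm$; setting $X := X^{0,\xi,\alpha}$, the DPP combined with $v\geq\psi$ and the canonical extension $v(\Prob_{(X_h,\bar a)}) = v(\Prob_{X_h})$ yields
\[
\psi(\nu) - \ExpVal\!\left[e^{-\beta h}\psi(\Prob_{(X_h,\bar a)})\right] \geq \ExpVal\!\left[\int_0^h e^{-\beta r} f(X_r,\Prob_{X_r},\bar a)\,dr\right].
\]
Because $\bar a$ is time-invariant, only the $\R^d$-block of $(X_r,\bar a)$ is driven, so It\^o's formula on $\PPtwoA$ produces a drift term in $\langle b,\partial_\mu\psi\rangle$ and a diffusion term in $\tfrac12 tr(\sigma\sigma^T\partial_x\partial_\mu\psi)$, where $\partial_\mu\psi = \pi_d(\partial_\nu\psi)$. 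Dividing by $h$, sending $h\to 0^+$, and recognising $\ExpVal[g(\xi,\bar a)] = \int_{\R^d\times A} g\, d\nu$ deliver \eqref{SupersolHJB}.

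The main obstacle will be the It\^o expansion on $\PPtwoA$ for this joint process with a time-invariant action-block: one must verify rigorously that, when the second block carries no dynamics, $\pi_d(\partial_\nu\psi)$ is the correct object to pair with $b$ and $\sigma\sigma^T$, and control the $o(h)$ remainder via the $W_2$-continuity of $r\mapsto\Prob_{(X_r,\bar a)}$ on $[0,h]$ combined with the joint continuity and the growth bounds built into $\TestFcts$. An analogous uniform-in-$\alpha^h$ remainder estimate, obtained from the H\"older hypothesis (\nameref{HpContf}) and the moment bound \eqref{InitialConditionEstimate}, is what makes the subsolution limit go through in spite of the $h$-dependence of the approximating controls.
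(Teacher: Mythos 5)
Your programme is correct and is essentially the paper's own route: the paper proves this theorem by adapting Theorem 6.2 in \cite{CheungTaiQiu} and Theorem 4.2 in \cite{BayraktarCheungTaiQiu} to the elliptic setting, and those proofs are exactly your scheme — DPP on a short window with $h^2$-optimal controls plus the It\^o formula on $\PPtwo$ for the subsolution, and a time-constant $\GG$-measurable control with $\LL(\xi,\bar a)=\nu$ plus the It\^o expansion on $\PPtwoA$ (only the $\R^d$-block carrying dynamics, paired with $\pi_d(\partial_\nu\psi)$) for the supersolution. The technical points you flag, namely the uniform-in-control convergence of $(X_r^h,\Prob_{X_r^h})$ to $(\xi,\mu)$ as $r\to 0^+$ and the justification of the joint It\^o formula with a static action block, are precisely the ``slight modifications'' the paper leaves to the cited parabolic results.
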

	\begin{proof}
		The result follows from an adaptation of Theorem 6.2 in \cite{CheungTaiQiu} and Theorem 4.2 in \cite{BayraktarCheungTaiQiu}, with slight modifications due to the elliptic structure of the equation.
	\end{proof}
	\begin{Osservazione}
		The result is still valid if we substitute assumptions (\nameref{HpGainAlternative}) with (\nameref{HpGain}), with minor changes in the proof; it is sufficient to follow the lines of Theorem 5.3 in \cite{CossoExistence}. Notice that the boundedness of the test functions is not required in our result. \qed
	\end{Osservazione}

	\subsection{Uniqueness of the solution}
	\label{SectionUniquenessHJB}
	We now discuss uniqueness of solutions to equation \eqref{HJB}. To the best of our knowledge, a comparison principle for viscosity solutions to elliptic partial differential equations on the space \(\PPtwo\) is not directly available. We thus proceed by an approximation argument: starting from a sub/super solution \(u\) to the elliptic equation \eqref{HJB}, for each time \(T>0\) we construct a suitable finite horizon problem on \([0,T]\) with associated value function \(v_T\), solving (in the viscosity sense) a parabolic Hamilton-Jacobi-Bellman equation on \([0,T] \times \PPtwo\), as stated e.g.\ in \cite{CossoExistence}, and a sub/super solution to the same parabolic equation \(U_T\) such that \(U_T(0,\cdot)=u(\cdot)\). For the class of Hamilton-Jacobi-Bellman parabolic PDEs, there exist comparison results, ensuring that \(v_T \geq U_T\) (respectively, \(\leq U_T\)) for \(U_T\) sub/super solution to the parabolic HJB equation.
	Passing to the limit \(T \rightarrow +\infty\), we show that \(v_T(0,\cdot)\) converges to our value function \(v\) and conclude that \eqref{HJB} also admits a unique viscosity solution, corresponding to \(v\).
	The proof of our uniqueness result will thus heavily rely on a comparison principle for parabolic PDEs on \(\PPtwo\); therefore, in this section we will need to impose stronger assumptions, guaranteeing that the comparison principle holds for all the finite horizon approximations of our infinite horizon problem.
	Following \cite{BayraktarCheungTaiQiu}, assume that:
	\begin{Ipotesi}[\(\boldsymbol{H_{un}}\)] \label{HpUniqueness}
		\begin{itemize}
			\item[] 
			\item[(\(H_{un}(1)\))] The Polish space \(A\) is a compact subset of an Euclidean space;
			\item[(\(H_{un}(2)\))] The function \(\sigma\) is independent on the measure argument, i.e.\ \(\sigma:\R^d \times A \rightarrow \R^{d \times m}\);
			\item[(\(H_{un}(3)\))] The functions \(b\), \(\sigma\), \(f\) are jointly continuous in all their arguments (with respect to the \(W_1\) distance in the measure argument);
			\item[(\(H_{un}(4)\))] The functions \(b\), \(\sigma\) and \(f\) are Lipschitz in \((x,\mu)\) uniformly with respect to \(a\) in the \(W_1\) Wasserstein distance, i.e.\ there exists a constant \(L' >0\) such that
			\begin{align*}
				&|b(x,\mu,a)-b(x',\mu',a)|+|\sigma(x,a)-\sigma(x',a)|\\
				+&|f(x,\mu,a)-f(x',\mu',a)| \leq L'(|x-x'|+W_1(\mu,\mu'))
			\end{align*}
			for all \(x,x' \in \R^d\), \(\mu,\mu' \in \PPtwo\) and \(a \in A\).
			\item[(\(H_{un}(5)\))] The functions \(b\), \(\sigma\) and \(f\) are bounded, i.e.\ there exists \(C'>0\) such that
			\[ |b(x,\mu,a)|+|\sigma(x,a)|+|f(x,\mu,a)| \leq C'  \]
			for all \(x \in \R^d\), \(\mu \in \PPtwo\) and \(a \in A\);
			\item[(\(H_{un}(6)\))] For any fixed \(a \in A\), the function \(\sigma(\cdot,a)\) belongs to \(C^2(\R^d)\); moreover, there exists a constant \(\Sigma>0\) such that
			\[ |\partial_{x_i} \sigma(x,a)|+|\partial_{x_i x_j}^2 \sigma(x,a)| \leq \Sigma  \]
			for all \((x,a) \in \R^d \times A\) and all \(i=1, \ldots,d\), \(j=1, \ldots, m\);
			\item[(\(H_{uc}(7)\))] The discount rate \(\beta>0\) is sufficiently large.
		\end{itemize}
	\end{Ipotesi}
	
	\begin{Osservazione}
		Notice that the \(W_1\)-Lipschitz continuity of all the coefficients in Assumption (\(H_{un}(4)\)) is the main novelty introduced by \cite{CheungTaiQiu} in order to correct the uniqueness result presented in \cite{CossoUniqueness}, as explained in Remark 5.1 \cite{CheungTaiQiu}. The lower bound for the admissible value of \(\beta\) will be made explicit in the proof of the following Lemma. \qed
	\end{Osservazione}

	\begin{Lemma}
		\label{LemmavW1Lipschitz}
		Under assumptions (\nameref{HpUniqueness}), the value function \(v\) is \(W_1\)-Lipschitz, i.e.\ there exists a constant \(\tilde{L}>0\) such that, for any \(\mu,\nu \in \PPtwo\),
		\[ |v(\mu)-v(\nu)| \leq \tilde{L} W_1(\mu,\nu).  \]
	\end{Lemma}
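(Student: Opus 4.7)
The plan is to reduce the Lipschitz estimate for \(v\) to a uniform-in-control \(L^1\)-stability bound on the state process. By the sufficiency of \(\GG\) (ensured by \((\boldsymbol{H_{\GG}})\)) combined with \(\mu, \nu \in \PPtwo\), I can select an optimal \(W_1\)-coupling \((\xi, \eta) \in L^2(\Omega, \GG, \Prob; \R^d)^2\) with \(\Prob_\xi = \mu\), \(\Prob_\eta = \nu\), and \(\ExpVal[|\xi - \eta|] = W_1(\mu, \nu)\). Writing \(X := X^{0, \xi, \alpha}\) and \(Y := X^{0, \eta, \alpha}\) for a common \(\alpha \in \Adm\), the \(W_1\)-Lipschitz property of \(f\) from \((H_{un}(4))\) together with \(W_1(\Prob_{X_s}, \Prob_{Y_s}) \le \ExpVal[|X_s - Y_s|]\) yields
\begin{equation*}
|J(0, \xi, \alpha) - J(0, \eta, \alpha)| \le 2L' \int_0^{+\infty} e^{-\beta s}\, \ExpVal[|X_s - Y_s|]\, ds.
\end{equation*}
Since \(|v(\mu) - v(\nu)| \le \sup_\alpha |J(0, \xi, \alpha) - J(0, \eta, \alpha)|\), the proof reduces to proving the uniform estimate \(\ExpVal[|X_s - Y_s|] \le e^{\kappa s} W_1(\mu, \nu)\) for some \(\kappa = \kappa(L')\), then integrating.

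The core difficulty is this state estimate. A direct application of It\^o's formula to \(|X_s - Y_s|^2\) produces only a \(W_2\)-type bound (the right-hand side involves \(\ExpVal[|\xi - \eta|^2]\)), which is strictly weaker than what is needed. To bypass this, I introduce the decoupling process \(Z\) solving the classical SDE
\begin{equation*}
dZ_s = b(Z_s, \Prob_{X_s}, \alpha_s)\, ds + \sigma(Z_s, \alpha_s)\, dB_s, \quad Z_0 = \eta,
\end{equation*}
obtained by freezing the (deterministic) flow \(\Prob_{X_s}\) in the drift. Assumption \((H_{un}(2))\), namely that \(\sigma\) does not depend on the measure argument, is essential here, since otherwise \(Z\) would itself be of McKean--Vlasov type. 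Conditioning on the \(\GG\)-measurable pair \((\xi, \eta)\), which by \((\boldsymbol{H_{\GG}})\) is a function of \(U\) and hence independent of \(B\), reduces \((X, Z)\) to a pair of classical SDEs driven by \(B\) with Lipschitz coefficients. The standard conditional \(L^2\)-stability bound and Jensen's inequality then give \(\ExpVal[|X_s - Z_s|] \le e^{(2L' + L'^2) s / 2}\, W_1(\mu, \nu)\).

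For the residual term \(Z_s - Y_s\) (identical initial condition \(\eta\), distinct law flows \(\Prob_{X_s}, \Prob_{Y_s}\)), I apply It\^o's formula to \(|Z_s - Y_s|^2\), invoke the \(W_1\)-Lipschitz estimates on \(b\) and \(\sigma\) from \((H_{un}(4))\) together with \(W_1(\Prob_{X_s}, \Prob_{Y_s}) \le g(s) := \ExpVal[|X_s - Y_s|]\), and use Jensen's inequality to reduce to a linear differential inequality for \(\sqrt{\ExpVal[|Z_s - Y_s|^2]}\). Variation of constants yields \(\ExpVal[|Z_s - Y_s|] \le L' \int_0^s g(r)\, e^{(2L' + L'^2)(s - r)/2}\, dr\). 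The triangle inequality \(g(s) \le \ExpVal[|X_s - Z_s|] + \ExpVal[|Z_s - Y_s|]\) and Gr\"onwall's lemma applied to \(\tilde g(s) := g(s) e^{-(2L' + L'^2) s/2}\) deliver \(g(s) \le e^{\kappa s} W_1(\mu, \nu)\) with \(\kappa = 2L' + L'^2/2\). Substituting back,
\begin{equation*}
|v(\mu) - v(\nu)| \le \frac{2L'}{\beta - \kappa}\, W_1(\mu, \nu),
\end{equation*}
with convergence of the integral exactly when \(\beta > \kappa\), which is the quantitative meaning of ``sufficiently large'' in \((H_{un}(7))\). The main obstacle is the passage from \(L^2\)- to \(L^1\)-stability in the propagation of initial-condition errors; the decoupling through \(Z\) is the device that bypasses it, and it rests crucially on \((H_{un}(2))\).
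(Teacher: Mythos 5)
Your proposal is correct and follows the same outer reduction as the paper (bound \(|J(0,\xi,\alpha)-J(0,\eta,\alpha)|\) by a constant times \(\ExpVal[|\xi-\eta|]\) uniformly in \(\alpha\in\Adm\), then pass to \(W_1(\mu,\nu)\) via law invariance and the richness of \(\GG\)), but the core state-stability estimate is obtained by a genuinely different device. The paper works directly with the two McKean--Vlasov solutions and proves \(\ExpVal\big[\sup_{0\le r\le s}|X_r^{\xi}-X_r^{\eta}|\big]\le 2\,\|\xi-\eta\|_{L^1}\) on a small interval \([0,s_0]\) via the Burkholder--Davis--Gundy inequality with exponent \(p=1\) (absorbing the sup into the left-hand side), and then iterates over intervals of length \(s_0\) to reach an exponential-in-time \(L^1\) bound; no decoupling process and no conditioning are used, and in particular the argument does not really exploit \((H_{un}(2))\). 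You instead insert the intermediate process \(Z\) with frozen law flow, handle \(X-Z\) by a conditional \(L^2\) estimate given \(\GG\) plus conditional Jensen (note that the heavy step ``conditioning reduces \((X,Z)\) to classical SDEs'' can be bypassed: apply It\^o to \(|X_s-Z_s|^2\), multiply by \(\mathbbm{1}_G\) with \(G\in\GG\subseteq\FF_0\) so the martingale term vanishes, and run Gr\"onwall for each \(G\); independence of \((\xi,\eta)\) from \(B\) is then not even needed, only \(\FF_0\)-measurability), and handle \(Z-Y\) by a differential inequality for \(\sqrt{\ExpVal[|Z_s-Y_s|^2]}\) followed by Gr\"onwall for \(g(s)=\ExpVal[|X_s-Y_s|]\). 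What each approach buys: yours avoids BDG and sup-norm estimates entirely and yields fully explicit constants (\(\kappa=2L'+L'^2/2\), \(\tilde L=2L'/(\beta-\kappa)\)), thereby quantifying \((H_{un}(7))\); the price is reliance on the measure-independence of \(\sigma\) from \((H_{un}(2))\) to keep \(Z\) classical (the paper's BDG route would survive a \(W_1\)-Lipschitz measure dependence in \(\sigma\)) and a couple of steps stated informally (realizing an optimal \(W_1\)-coupling as a \(\GG\)-measurable pair, which follows as in Lemma 2.1 of the cited reference applied on \(\R^{2d}\), and the regularization needed to differentiate \(\sqrt{\ExpVal[|Z_s-Y_s|^2]}\) where it vanishes). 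Since \((H_{un}(2))\) is assumed anyway and \(\beta\) is only required to be ``sufficiently large'', your argument establishes the lemma in full; the different explicit threshold for \(\beta\) is immaterial to how the lemma is used later.
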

	\begin{proof}
		The proof is based on the following claim: there exists a constant \(\tilde{L} \geq 0\) such that, for any \(\xi, \eta \in L^2(\Omega, \GG, \Prob)\),
		\[ |J(0,\xi,\alpha)- J(0,\eta,\alpha)| \leq \tilde{L} ||\xi - \eta||_{L^1} \text{ for any } \alpha \in \Adm.\]
		As a matter of fact, from this inequality we easily deduce that
		for any \(\mu, \nu \in \PPtwo\),
		\begin{align*}
			|v(\mu)-v(\nu)| &= \inf \{ |V(0,\xi)-V(0,\eta)|: \xi, \eta \in L^2(\Omega, \GG, \Prob) \text{ with } \Prob_{\xi}=\mu, \Prob_{\eta}=\nu \} \\
			&\leq \inf \{ \tilde{L} ||\xi-\eta||_{L^1}: \xi, \eta \in L^2(\Omega, \GG, \Prob) \text{ with } \Prob_{\xi}=\mu, \Prob_{\eta}=\nu \} \\
			&= \tilde{L} W_1(\mu,\nu)
		\end{align*}
		as \(V(0,\cdot)\) is law invariant and \(\GG\) is a sufficiently rich \(\sigma\)-algebra, and the thesis holds.
		
		In order to prove the claim, fix any two initial conditions \(\xi, \eta \in L^2(\Omega, \GG, \Prob)\) and denote the solution to the state equation \eqref{StateEquation} starting at time \(t=0\) from \(\xi\) and \(\eta\), respectively, by \(X^{\xi}\) and \(X^{\eta}\). By standard estimates relying on Burkholder-Davis-Gundy inequality (see e.g.\ Theorem 5.16 in \cite{LeGall}) and (\nameref{HpUniqueness}),
		\begin{equation}
			\label{ProofvLipschitzIteration1}
			\ExpVal \bigg[ \sup \limits_{0 \leq r \leq s} |X_r^{\xi}-X_r^{\eta}| \bigg] \leq 2  ||\xi-\eta||_{L^1}.
		\end{equation}
		for \(0 \leq s \leq s_0:=s_0(L',C_1)\), where 
		\(C_1\) is the constant appearing in Burkholder-Davis-Gundy inequality for \(p=1\). Iterating \eqref{ProofvLipschitzIteration1}, we deduce that
		\begin{equation}
			\label{ProofvLipschitzIteration3}
			\ExpVal \bigg[ \sup \limits_{0 \leq r \leq ks_0} |X_r^{\xi}-X_r^{\eta}| \bigg] \leq 2^k ||\xi-\eta||_{L^1}.
		\end{equation}
		Now, for fixed \(s \geq 0\), we can always find an integer \(k\) such that \((k-1)s_0 \leq s \leq ks_0\). As a consequence,
		\begin{equation}
			\label{ProofvLipschitzInitialConditionsEstimate}
			\ExpVal \bigg[ \sup \limits_{0 \leq r \leq s} |X_r^{\xi}-X_r^{\eta}| \bigg] \leq e^{\bar{C_0}(s+1)} ||\xi-\eta||_{L^1}
		\end{equation}
		with \(\bar{C_0}=\bar{C_0}(s_0)  \). 
		Therefore, by (\nameref{HpUniqueness}),
		\begin{align*}
			|J(0,\xi,\alpha)- J(0,\eta,\alpha)|
			&\leq \ExpVal \bigg[ \int_{0}^{+ \infty} e^{-\beta s} L' \bigg(|X_s^{\xi}-X_s^{\eta}| + W_1(\Prob_{X_s^{\xi}}, \Prob_{X_s^{\eta}})  \bigg) ds \bigg] \quad \text{(by \eqref{ProofvLipschitzInitialConditionsEstimate})} \nonumber \\
			&\leq 2L'e^{\bar{C_0}} \int_{0}^{+ \infty} e^{(-\beta + \bar{C_0}) s} ds ||\xi-\eta||_{L^1} \quad \text{(if } \beta > \bar{C_0}) \nonumber \\
			&=\tilde{L} ||\xi-\eta||_{L^1},
		\end{align*}
		where \(\tilde{L}\) is a finite constant independent of \(\alpha\), provided that \(\beta > \bar{C_0}\). 
	\end{proof}
	
	We now state and prove the main result of this section, which characterizes the value function \(v\) as the unique viscosity solution to equation \eqref{HJB} in a specific class of functions.
	\begin{Teorema}
		\label{ThmUniqueness}
		Under assumptions (\nameref{HpUniqueness}), the value function \(v\) of our control problem is the unique bounded and \(W_1\)-Lipschitz viscosity solution to equation \eqref{HJB}.
	\end{Teorema}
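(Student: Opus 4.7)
The plan is to follow the approximation strategy explicitly announced in the paragraph introducing the theorem: reduce the elliptic uniqueness problem on $\PPtwo$ to a parabolic comparison principle for a family of finite horizon problems, for which the assumptions $(\boldsymbol{H_{un}})$ are precisely those imposed in \cite{CheungTaiQiu,BayraktarCheungTaiQiu}. Since $v$ is itself bounded (Corollary \ref{ThmGrowthConditionsofv} under $(\boldsymbol{H_{f,\beta}'})$) and $W_1$-Lipschitz (Lemma \ref{LemmavW1Lipschitz}), it suffices to show $v\le u$ whenever $u$ is a bounded $W_1$-Lipschitz viscosity supersolution of \eqref{HJB}, and $v\ge u$ whenever $u$ is a bounded $W_1$-Lipschitz viscosity subsolution.

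For each $T>0$, I introduce the finite horizon value function
\[
v_T(t,\mu):=\sup_{\alpha\in\Adm}\ExpVal\!\left[\int_t^T e^{-\beta(s-t)} f(X_s^{t,\xi,\alpha},\Prob_{X_s^{t,\xi,\alpha}},\alpha_s)\,ds\right],
\]
with $\Prob_\xi=\mu$. Under $(\boldsymbol{H_{un}})$ the framework of \cite{CossoExistence} shows that $v_T$ is a bounded, $W_1$-Lipschitz viscosity solution (with $v_T(T,\cdot)=0$) of the parabolic HJB equation
\[
-\partial_t V+\beta V-\sup_{a\in A}\bigl\{f+\langle b,\partial_\mu V\rangle+\tfrac12\operatorname{tr}(\sigma\sigma^*\partial_x\partial_\mu V)\bigr\}=0
\]
on $[0,T)\times\PPtwo$. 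Moreover, because $|f|\le C'$, the tail estimate
\[
|v(\mu)-v_T(0,\mu)|\le \int_T^{+\infty}e^{-\beta s}C'\,ds=\tfrac{C'}{\beta}e^{-\beta T}\longrightarrow 0
\]
gives $v_T(0,\cdot)\to v(\cdot)$ uniformly on $\PPtwo$ as $T\to\infty$.

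Given a bounded $W_1$-Lipschitz subsolution $u$ of \eqref{HJB} with $\|u\|_\infty\le M$, I set
\[
U_T(t,\mu):=u(\mu)-Me^{-\beta(T-t)}.
\]
Since $\partial_t U_T-\beta U_T=-\beta u(\mu)$, the elliptic subsolution inequality $\beta u\le\sup_a\{f+L^a u\}$ (and the identity $\partial_\mu U_T=\partial_\mu u$, $\partial_x\partial_\mu U_T=\partial_x\partial_\mu u$) translates directly into the parabolic subsolution inequality for $U_T$ on $[0,T)\times\PPtwo$; this transfer works both in the test function formulation and, for the supersolution counterpart, in the $\PPtwoA$ formulation of Definition \ref{DefinitionSupersol}, because shifting by $-Me^{-\beta(T-t)}$ does not alter the Lions derivatives. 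At the terminal time $U_T(T,\mu)=u(\mu)-M\le 0=v_T(T,\mu)$, $U_T$ is bounded and $W_1$-Lipschitz with the same constant as $u$, and both are $W_1$-Lipschitz. Applying the parabolic comparison principle in the class of bounded $W_1$-Lipschitz functions from \cite{CheungTaiQiu,BayraktarCheungTaiQiu} (whose hypotheses are exactly $(\boldsymbol{H_{un}})$, with $\beta$ chosen large enough), I obtain $U_T\le v_T$ on $[0,T]\times\PPtwo$. Evaluating at $t=0$ gives $u(\mu)-Me^{-\beta T}\le v_T(0,\mu)$, and letting $T\to\infty$ yields $u\le v$. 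The symmetric construction $U_T(t,\mu):=u(\mu)+Me^{-\beta(T-t)}$ for a supersolution $u$ provides $v\le u$, proving uniqueness.

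The main technical obstacle, and the reason the stronger assumptions $(\boldsymbol{H_{un}})$ are required, is verifying that all the hypotheses of the parabolic comparison theorem are met by both $v_T$ and the shifted object $U_T$: one has to check that $v_T$ indeed lies in the class of continuous, bounded, $W_1$-Lipschitz viscosity solutions (the Lipschitz estimate follows from the argument in Lemma \ref{LemmavW1Lipschitz} restricted to $[0,T]$, using $(H_{un}(4))$ together with $(H_{un}(5))$), and that $U_T$ fits the same framework; the sign conventions and the $\PPtwoA$-lifted form of the supersolution test, in particular the identity $\partial_{\mu}U_T(\nu)=\partial_{\mu}u(\nu_1)$ inside the integral in \eqref{SupersolHJB}, must be handled carefully so that the time-independent perturbation passes through the Lions-derivative formalism without introducing spurious terms.
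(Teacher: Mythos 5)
Your overall route is the same as the paper's: approximate by finite horizon problems, transfer the elliptic viscosity inequalities into parabolic ones via an explicit time-dependent modification, and conclude with the parabolic comparison principle of \cite{BayraktarCheungTaiQiu}; the only structural difference is bookkeeping. The paper inserts the sub/supersolution \(u\) as the terminal gain of the approximating problem (so \(v_T\) depends on \(u\)) and uses \(U_T(t,\mu)=e^{-\beta t}u(\mu)\), which matches the terminal condition of \eqref{ParabolicHJB} exactly, while you keep zero terminal gain and shift by \(\pm Me^{-\beta(T-t)}\). Your variant is workable: the computation \(-\partial_t U_T+\beta U_T=\beta u\), combined with the elliptic subsolution inequality for \(u\), does give the parabolic subsolution inequality (with the endpoint case \(\bar t=0\) giving the inequality in the right direction), the terminal ordering \(U_T(T,\cdot)\le 0=v_T(T,\cdot)\) follows from \(\|u\|_\infty\le M\), and the uniform convergence \(v_T(0,\cdot)\to v\) is correct under \((H_{un}(5))\).

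Two steps, however, are not justified as written. First, the parabolic equation you assign to \(v_T\) carries the zeroth-order term \(\beta V\) and the undiscounted running gain \(f\); the comparison theorem you invoke (Theorem 4.3 in \cite{BayraktarCheungTaiQiu}, the one the paper uses) is stated for the equation \eqref{ParabolicHJB} without zeroth-order term, with running gain \(e^{-\beta t}f\) and prescribed terminal data. You need the standard change of unknown \(w(t,\mu)=e^{-\beta t}V(t,\mu)\) (after which your \(U_T\) becomes \(e^{-\beta t}u(\mu)-Me^{-\beta T}\), i.e.\ the paper's choice up to an additive constant) before the cited result applies. Second, and more importantly in this paper's setting, you credit the viscosity-solution property of \(v_T\) to the framework of \cite{CossoExistence}. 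That framework yields the supersolution property only in the Crandall--Lions sense, which, as Remark \ref{RemarkEquivalentFormulationSupersolutions} emphasizes (following Remark 6.1 in \cite{CheungTaiQiu} and Remark 2.5 in \cite{BayraktarCheungTaiQiu}), is precisely what is \emph{not} sufficient to run the comparison: the inequality \(U_T\le v_T\) needs \(v_T\) to be a supersolution in the \(\PPtwoA\)-lifted sense of Definition \ref{DefinitionSupersol}, which is provided by Theorem 4.2 in \cite{BayraktarCheungTaiQiu}, the result the paper cites. With these two corrections (and carrying out the supersolution half with \(\PPtwoA\) test functions, which you acknowledge but, like the paper, leave sketched), your argument goes through.
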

	
	\begin{proof}
		Notice that under Assumptions (\nameref{HpUniqueness}) \(v\) is \(W_2\)-continuous, bounded and also \(W_1\)-Lipschitz, see Corollary \ref{ThmGrowthConditionsofv},  Proposition \ref{PropContinuityv}, and Lemma \ref{LemmavW1Lipschitz}. Furthermore, by Theorem \ref{ThmvViscositySolHJB}, it is sufficient to prove the following: for any bounded and \(W_1\)-Lipschitz viscosity subsolution \(u\) to Hamilton-Jacobi-Bellman equation \eqref{HJB}, it holds that
		\begin{equation}
			\label{vsmallersupersolution}
			v(\mu) \geq u(\mu) \quad \forall \mu \in \PPtwo
		\end{equation}
		and, for any bounded and \(W_1\)-Lipschitz viscosity supersolution \(w\)  to Hamilton-Jacobi-Bellman equation \eqref{HJB}, it holds that 
		\begin{equation}
			\label{vbiggersubsol}
			w(\mu) \geq v(\mu) \quad \forall \mu \in \PPtwo. 
		\end{equation}
		
		Fix any bounded and \(W_1\)-Lipschitz subsolution \(u\) to equation \eqref{HJB}. We first approximate \(v\) with a suitable sequence of value functions of finite horizon problems. 
		For any  \(T>0\), we aim at maximizing the gain functional
		\[  J(t,\xi,\alpha) := \ExpVal \bigg[ \int_{t}^{T} e^{-\beta s} f(X_s,\Prob_{X_s}, \alpha_s) ds \bigg] + e^{-\beta T} u(\Prob_{X_T})  \]
		over the feasible controls belonging to
		\( \Adm^T:= \{ \alpha: [0,T] \times \Omega \rightarrow A: \alpha \text{ is } \F \text{-predictable}  \}  \)
		under the dynamics
		\begin{equation}
			\label{StateEquationTruncatedT}
			\begin{cases}
				dX_s= b(X_s, \Prob_{X_s}, \alpha_s) ds + \sigma(X_s, \alpha_s) dB_s \quad s \in [t, T] &\\
				X_t= \xi&
			\end{cases}
		\end{equation} 
		for \(t \in [0,T]\) and \(\xi \in L^2(\Omega, \FF_t, \Prob)\).
		We define the lifted value function for our finite horizon control problem as
		\( V_T(t,\xi) = \sup_{\alpha \in \Adm^T} J(t,\xi,\alpha); \)
		by Theorem 3.6 in \cite{CossoExistence}, the function \(V_T\) is law invariant and we can thus define the true value function of the problem as
		\[  v_T(t,\mu) := V_T(t,\xi) \qquad \text{ for any } \xi \in L^2(\Omega,\FF_t,\Prob) \text{ having law } \mu. \]
		
		Notice that 
		\( \Adm^T= \{\alpha_{|_{[0,T]}}: \alpha \in \Adm\},  \) which implies
		\begin{align*}
			|v(\mu)-v_T(0,\mu)|
			& \leq \sup_{\alpha \in \Adm} \bigg\{  \ExpVal \bigg[ \int_{T}^{+ \infty}  e^{- \beta s} | f(X_s, \Prob_{X_s}, \alpha_s)| ds \bigg] + \bigg| e^{-\beta T} u(\Prob_{X_T}) \bigg| \bigg\} \\
			&\leq \tilde{C} \bigg(\frac{1}{\beta}+1 \bigg) e^{-\beta T} \xrightarrow{T \to +\infty} 0 \qquad \forall \mu \in \PPtwo,
		\end{align*}
		for some constant \(\tilde{C}\), where we used the boundedness of \(u\) and of \(f\).
		Thus, the sequence \(v_T(0,\cdot)\) approximates \(v\). 
		
		We now recall that, by Theorem 4.2 in \cite{BayraktarCheungTaiQiu}, \(v_T\) solves in the viscosity sense the following parabolic Hamilton-Jacobi-Bellman equation:
		\begin{equation}
			\label{ParabolicHJB}
			\begin{cases}
				\displaystyle
				-\partial_{t} w(t,\mu)= \int_{\R^d} \bigg[ \sup \limits_{a \in A} \bigg\{ e^{-\beta t} f(x,\mu,a) + <b(x,\mu,a), \partial_{\mu} w(t,\mu)(x)> \\ 
				+ \frac{1}{2} tr(\sigma(x,a) \sigma^T(x,a) \partial_{x} \partial_{\mu} w(t,\mu)(x)) \bigg\} \bigg] \mu(dx)   \quad (t,\mu) \in [0,T] \times \PPtwo&\\
				w(T,\mu)= e^{-\beta T} u(\mu) \quad \mu \in \PPtwo&
			\end{cases} 
		\end{equation}
		Moreover, as \(e^{-\beta t} f(x,\mu,a)\) and \(g(\mu):=e^{-\beta T} u(\mu)\) satisfy Assumptions \textbf{(A)} in \cite{BayraktarCheungTaiQiu}, by the comparison principle in the parabolic case, i.e.\ Theorem 4.3 in \cite{BayraktarCheungTaiQiu}, 
		\begin{equation}
			\label{ProofUniquenessSubsolution}
			v_T(t,\mu) \geq U_T(t,\mu) \quad \text{for any } t\in [0,T], \mu \in \PPtwo
		\end{equation}
		for any \(W_1\)-continuous and bounded subsolution \(U_T\) to equation \eqref{ParabolicHJB}.
		
		We now want to construct an appropriate subsolution to \eqref{ParabolicHJB}.	  
		Consider
		\( U_T(t,\mu):= e^{-\beta t} u(\mu)  \) for all \(t \in [0,T], \, \mu \in \PPtwo.\)
		Notice that \(U_T(0,\mu)=u(\mu)\) for any \(\mu \in \PPtwo\). Moreover, for any \(\mu \in \PPtwo\),  \(U_T(\cdot, \mu) \in C^{\infty}(0,T)\) with \(\partial_{t} U_T(t,\mu)=-\beta U_T(t,\mu)\). We prove that \(U_T\) is a viscosity subsolution to \eqref{ParabolicHJB} (in the sense of Definition 2.5 in \cite{BayraktarCheungTaiQiu}). 
		Firstly, \(U_T\) is bounded, \(([0,T], |\cdot|) \times (\PPtwo, W_1)\)- continuous and the boundary condition is verified as \(U_T(T,\mu)=e^{-\beta T}u(\mu)\) for any \(\mu \in \PPtwo\). Fix a function \(\psi\) such that the derivatives \(\partial_{t} \psi\), \(\partial_{\mu} \psi\), \(\partial_x \partial_{\mu} \psi\) exist and are jointly continuous and there exists a constant \(C_{\psi}\) such that
		\[ |\partial_{t} \psi(t,\mu)|+| \partial_x \partial_{\mu} \psi(t,\mu)(x)| \leq C_{\psi} \text{ and } |\partial_{\mu} \psi(t,\mu)(x)| \leq C_{\psi}(1+|x|) \]
		for any \((t,\mu,x) \in [0,T] \times \PPtwo \times \R^d\). Then, \(\psi\) is a test function
		for the parabolic HJB equation \eqref{ParabolicHJB}. Now fix a point \((\bar{t},\bar{\mu}) \in [0,T)\times \PPtwo \) and assume that \((U_T-\psi)(\bar{t},\bar{\mu})=0\) and \((\bar{t},\bar{\mu})\) is a point of maximum for the function \(U_T-\psi\), i.e.\
		\[ U_T(\bar{t},\bar{\mu})-\psi(\bar{t},\bar{\mu}) \geq U_T(t,\mu)-\psi(t,\mu) \qquad \forall t \in (0,T), \mu \in \PPtwo.  \]
		Multiplying by \(e^{\beta \bar{t}}\) and choosing \(t=\bar{t}\), we deduce that
		\[  u(\bar{\mu}) - e^{\beta \bar{t}}\psi(\bar{t},\bar{\mu}) \geq u(\mu) - e^{\beta \bar{t}}  \psi(\bar{t},\mu) \quad \forall \mu \in \PPtwo. \]
		Call \( \phi(\mu):= e^{\beta \bar{t}}\psi(\bar{t},\mu) \); notice that it is a function belonging to
		\(\TestFcts(\R^d)\) such that \(u-\phi\) has a point of maximum in \(\bar{\mu}\) and \((u-\phi)(\bar{\mu}) =0\).
		Thus, \(\phi\) is a test function for the subsolution property of the elliptic Hamilton-Jacobi-Bellman equation \eqref{HJB}. 
		
		By hypothesis, \(u\) is a subsolution to the elliptic Hamilton-Jacobi-Bellman equation \eqref{HJB}, which means that the following inequality is satisfied:
		\begin{align*}
			-\beta u(\bar{\mu}) &+ \int_{\R^d} \sup_{a \in A} \bigg( f(x,\bar{\mu},a) + <b(x,\bar{\mu},a), \partial_{\mu} \phi(\bar{\mu})(x)> \\ &+ \frac{1}{2} tr(\sigma (x,a) \sigma^T(x,a) \partial_{x} \partial_{\mu} \phi(\bar{\mu}) (x)) \bigg) \bar{\mu}(dx) \geq 0.
		\end{align*}
		If we multiply by \(e^{-\beta \bar{t}}\), by the definition of \(\phi\) we deduce that
		\begin{align*}
			0 &\leq -\beta e^{-\beta \bar{t}} u(\bar{\mu})+\int_{ \R^d }\sup_{a \in A} \bigg( f(x,\bar{\mu},a) e^{-\beta \bar{t}} + <b(x,\bar{\mu},a), e^{-\beta \bar{t}} \partial_{\mu} \phi(\bar{\mu})(x)> \\
			& + \frac{1}{2} tr(\sigma (x,a) \sigma^T(x,a) \partial_{x} \partial_{\mu} e^{-\beta \bar{t}} \phi(\bar{\mu}) (x)) \bigg) \bar{\mu}(dx) \\
			&= -\beta U_T(\bar{t}, \bar{\mu}) + \int_{ \R^d }  \sup_{a \in A} \bigg( f(x,\bar{\mu},a) e^{-\beta \bar{t}} + <b(x,\bar{\mu},a), \partial_{\mu} \psi(\bar{t},\bar{\mu})(x)> \\
			& + \frac{1}{2} tr(\sigma (x,a) \sigma^T(x,a) \partial_{x} \partial_{\mu} \psi(\bar{t}, \bar{\mu})(x)) \bigg) \bar{\mu}(dx) \\
			&=  \partial_{t} U_T(\bar{t},\bar{\mu}) + \int_{ \R^d } \sup_{a \in A} \bigg( f(x,\bar{\mu},a) e^{-\beta \bar{t}} + <b(x,\bar{\mu},a), \partial_{\mu} \psi(\bar{t},\bar{\mu})(x)> \\
			& + \frac{1}{2} tr(\sigma (x,a) \sigma^T(x,a) \partial_{x} \partial_{\mu} \psi(\bar{t}, \bar{\mu})(x)) \bigg) \bar{\mu}(dx) \\
			&=  \partial_{t} \psi(\bar{t},\bar{\mu}) + \int_{ \R^d } \sup_{a \in A} \bigg( f(x,\bar{\mu},a) e^{-\beta \bar{t}} + <b(x,\bar{\mu},a), \partial_{\mu} \psi(\bar{t},\bar{\mu})(x)> \\
			& + \frac{1}{2} tr(\sigma (x,a) \sigma^T(x,a) \partial_{x} \partial_{\mu} \psi(\bar{t}, \bar{\mu})(x)) \bigg) \bar{\mu}(dx),
		\end{align*}
		where in the last equality we used the fact that \((\bar{t}, \bar{\mu})\) is a point of maximum for the function \(U_T-\psi\), which is of class \(\Cont^1\) with respect to the time variable.
		By the arbitrariness of \((\bar{t}, \bar{\mu})\) and \(\psi\), \(U_T\) is a viscosity subsolution to the parabolic Hamilton Jacobi Bellman equation \eqref{ParabolicHJB}. As a consequence, by \eqref{ProofUniquenessSubsolution}, \( U_T(t,\mu) \leq v_T(t,\mu) \) for all \(t \in [0,T], \, \mu \in \PPtwo.\)
		
		Choosing \(t=0\), we obtain
		\begin{equation*}
			u(\mu) = U_T(0,\mu) \leq v_T(0,\mu) \qquad \forall \mu \in \PPtwo;
		\end{equation*}
		taking the limit for \(T \to + \infty\) and using the convergence of \(v_T(0,\cdot)\) to \(v\), we conclude that
		\[ u(\mu) \leq v(\mu) \qquad \forall \mu \in \PPtwo.  \]
		
		Repeating a similar reasoning for supersolutions, where some modifications are needed as the class of test functions is defined over \(\PPtwoA\), we conclude that the thesis holds.
	\end{proof}
	
	\begin{Osservazione}
		The proof of our uniqueness result relies on Theorem 4.3 in \cite{BayraktarCheungTaiQiu}. Nonetheless, the same procedure could be applied in any case  where a comparison principle for parabolic Hamilton-Jacobi-Bellman equations on the space \(\PPtwo\) is available. Moreover, the reasoning does not depend on the specific choice of the definition of viscosity supersolution: it would still hold for a definition à la Crandall-Lions, provided that a uniqueness result applies for the analogue definition of supersolution to a parabolic HJB. \qed
	\end{Osservazione}
	\section*{Acknowledgements}
	The author is a member of INDAM-GNAMPA. The author would like to thank Professor Marco Fuhrman for the several useful discussions.
	\newpage
	\appendix
	\section{Time Invariance Property: technical results}
	\label{AppendixEquaivalenctFormulation}
	We first prove the well-posedness of the ``disintegration system'' of the Mckean-Vlasov equation with respect to the first part of the Brownian trajectory, i.e.\ Lemma \ref{LemmaWellPosednessofDisintegrationEquation}. Notice that the system \eqref{DisintegrationEquation} is not a system of classical SDEs or a system of McKean-Vlasov SDEs; it is a system of SDEs parametrised by \(w \in \Cont([0,r])\), which does not belong to a finite or countable set of indices, where the ``particles'' \(X^w\) interact through the term
	\[ \int_{\Cont([0,r])} \Prob_{X_s^{w'}} W_{[0,r]}(w').  \]

	\begin{proof}[Proof of Lemma \ref{LemmaWellPosednessofDisintegrationEquation}]
		It is sufficient to prove the result on a bounded interval \([r,T]\) for any \(T>r\) and then to ``glue'' the solutions to construct the unique solution on the interval \([0, +\infty)\) with a standard procedure.
		
		Fix any \(T>r\). For any flow of probability measures \(\nu=(\nu_s)_{s \in [r,T]} \in \Cont([r,T]; \PPtwo)\), we can define the system of SDEs
		\begin{equation}
			\label{DisintegrationEquationUndernu}
			\begin{cases}
				dX_s^{w,\nu}=b(X_s^{w,\nu}, \nu_s, \alpha_s^r(w)) ds + \sigma(X_s^{w,\nu}, \nu_s, \alpha_s^r(w)) dB_s, \quad &s \in [r,T] \\
				X_r^{w,\nu}=\xi &
			\end{cases}
		\end{equation}
		for every \(w \in \Cont([0,r])\).
		For every fixed \(\nu\), \eqref{DisintegrationEquationUndernu} is a system of decoupled SDEs with random initial condition, each of which indexed by a different parameter \(w \in \Cont([0,r])\). Under assumptions (\nameref{HpCoefficients}), there thus exists a unique solution to \eqref{DisintegrationEquationUndernu}, denoted by \(X^{w,\nu}\) and such that \(\Prob\)-a.s.
		\[ X_s^{w,\nu}= \xi + \int_{r}^{s} b(X_u^{w,\nu}, \nu_u, \alpha_u^r(w)) du + \int_{r}^{s} \sigma(X_u^{w,\nu}, \nu_u, \alpha_u^r(w)) dB_u \]
		for every \(s \geq r\) and for every \(w \in \Cont([0,r])\). 
		
		We now define a map
		\begin{align*}
			\Phi: \, &\Cont([r,T]; \PPtwo) \rightarrow \Cont([r,T]; \PPtwo) \\
			& \nu= (\nu_s)_{s \geq 0} \rightarrow (\Phi(\nu)(s))_{s \geq 0}
		\end{align*}
		such that 
		\[ \Phi(\nu)(s)(E) := \int_{\Cont([0,r])} \Prob_{X_s^{w,\nu}}(E) W_{[0,r]}(dw) \quad \text{ for all } E \in \Bor(\R^d) \]
		for every \(s \geq r\). Notice that \(\Phi\) is well defined because, for every fixed \(E \in \Bor(\R^d)\) and \(s \geq r\), the map \(w \rightarrow \Prob_{X_s^{w,\nu}}(E)\) is in \(L^1(\Cont([0,r]),\Bor(\Cont([0,r])),W_{[0,r]})\) by Fubini theorem and Section 12 in \cite{StrickerYorParamètre}, which guarantees the joint measurability of \(X^{w,\nu}\) with respect to \((s,\omega,w)\). 
		Clearly, there exists a unique solution to the system \eqref{DisintegrationEquation} if and only if the map \(\Phi\) admits a unique fixed point \(\hat{\nu}\).
		
		We endow the space \(\Cont([r,T]; \PPtwo)\) with the uniform-in-time Wasserstein distance, i.e.\ for any  \(\mu, \nu \in \Cont([r,T]; \PPtwo)\)
		\[ d(\mu,\nu):= \sup \limits_{r \leq s \leq T} W_2(\mu_s, \nu_s). \] 
		Consider two flows of probability measures \(\mu, \nu \in \Cont([r,T]; \PPtwo)\). Compute
		\begin{align}
			\label{ProofPhiContraction1}
			&\sup \limits_{r \leq s \leq T} W_2^2(\Phi(\mu)(s), \Phi(\nu)(s)) \nonumber \\
			= &\sup \limits_{r \leq s \leq T} W_2^2 \bigg( \int_{\Cont([0,r])} \Prob_{X_s^{w,\mu}} W_{[0,r]} (dw), \int_{\Cont([0,r])} \Prob_{X_s^{w,\nu}} W_{[0,r]} (dw) \bigg).
		\end{align}
		We estimate the right hand side of equation \eqref{ProofPhiContraction1} by means of the Kantorovich Duality Theorem (see e.g.\ Theorem 5.10 in \cite{Villani}):
		\begin{align}
			&W_2^2 \bigg( \int_{\Cont([0,r])} \Prob_{X_s^{w,\mu}} W_{[0,r]} (dw), \int_{\Cont([0,r])} \Prob_{X_s^{w,\nu}} W_{[0,r]} (dw) \bigg) \nonumber \\
			&= \sup \limits_{ \substack{\phi,\psi \in \Cont_b(\R^d;\R): \\ \phi(x)+\psi(y) \leq |x-y|^2 } } \bigg[ \int_{\R^d} \phi(x) \bigg( \int_{\Cont([0,r])} \Prob_{X_s^{w,\mu}} W_{[0,r]} (dw) \bigg) (dx) + \nonumber \\
			&\qquad \qquad \qquad \qquad \int_{\R^d} \psi(y)
			\bigg(\int_{\Cont([0,r])} \Prob_{X_s^{w,\nu}} W_{[0,r]} (dw)\bigg) (dy) \bigg] \, \text{ (by Fubini theorem)} \nonumber\\
			&= \sup \limits_{ \substack{\phi,\psi \in \Cont_b(\R^d;\R): \\ \phi(x)+\psi(y) \leq |x-y|^2 } }  \int_{\Cont([0,r])} \bigg[ \int_{\R^d} \phi(x) \Prob_{X_s^{w,\mu}} (dx) + \int_{\R^d} \psi(y) \Prob_{X_s^{w,\nu}}(dy) \bigg] W_{[0,r]} (dw) \nonumber \\
			&\leq \int_{\Cont([0,r])} W_2^2(\Prob_{X_s^{w,\mu}}, \Prob_{X_s^{w,\nu}}) W_{[0,r]} (dw) \nonumber \\
			&\leq \int_{\Cont([0,r])} \ExpVal \bigg[ |X_s^{w,\mu}-X_s^{w,\nu}|^2 \bigg] W_{[0,r]}(dw),
			\label{ProofPhiContraction2}
		\end{align}
		where the second-to-last inequality follows applying again Kantorovich duality theorem. Putting equation \eqref{ProofPhiContraction1} and inequality \eqref{ProofPhiContraction2} together, we can say that
		\begin{align}
			\label{ProofPhiContraction3}
			\sup \limits_{r \leq s \leq T} W_2^2(\Phi(\mu)(s), \Phi(\nu)(s)) \leq \sup \limits_{r \leq s \leq T} \int_{\Cont([0,r])} \ExpVal \bigg[  |X_s^{w,\mu}-X_s^{w,\nu}|^2 \bigg] W_{[0,r]}(dw)
		\end{align}
		Proceeding as in the proof of Theorem 4.21 in \cite{CarmonaDelarue1}, we can show that, for \(C=4L^2(T+4)\),
		\begin{equation}
			\ExpVal \bigg[ |X_s^{w,\mu}-X_s^{w,\nu}|^2 \bigg] \leq  C e^{C(s-r)} \int_{r}^{s} W_2^2(\nu_t,\mu_t) dt \quad \text{for all } s \in [r,T].
			\label{ProofPhiContraction4}
		\end{equation}
		Pulling inequality \eqref{ProofPhiContraction4} back into \eqref{ProofPhiContraction3}, we can deduce that there exists a constant \(K(T) >0\) such that
		\begin{equation}
			\label{ProofPhiContraction5}
			\sup \limits_{r \leq s \leq T} W_2^2(\Phi(\mu)(s), \Phi(\nu)(s)) \leq K(T) \int_{r}^{T} W_2^2(\nu_t,\mu_t) dt.
		\end{equation}

		Now denote by \(\Phi^n\) the composition of \(n\) times the map \(\Phi\). Iterating \eqref{ProofPhiContraction5}, we conclude that
		\begin{align*}
			\sup \limits_{t \leq s \leq T} W_2^2(\Phi^n(\mu)(s), \Phi^n(\nu)(s)) &\leq K^n(T) \int_{r}^{T} \frac{(T-s)^{n-1}}{(n-1)!} W_2^2(\mu_s,\nu_s) ds \\
			& \leq K^n(T) \frac{(T-r)^n}{n!} \sup \limits_{r \leq s \leq T} W_2^2(\mu_s,\nu_s).
		\end{align*}
		For sufficiently large \(n \in \N\), \(K^n(T) \frac{(T-r)^n}{n!} < 1\) and \(\Phi^n\) is a contraction.
		
		\(\Cont([r,T]; \PPtwo)\) endowed with the uniform-in-time Wasserstein distance is a non-empty complete metric space; therefore, by Banach-Caccioppoli theorem, there exists a unique fixed point for the map \(\Phi^n\). Therefore, 
		\(\Phi\) admits a unique fixed point and the thesis holds.
	\end{proof}
	
	We conclude the Appendix with the proof of the continuity of \(J\) with respect to time uniformly with respect to the control parameter (Theorem \ref{ThmContinuityofJwrtTime}).
	\begin{proof}[Proof of Theorem \ref{ThmContinuityofJwrtTime}]
		Fix any \(\alpha \in \Adm\), \(\xi \in L^2(\Omega, \GG, \Prob)\) and \(\epsilon>0\). Compute, for \(u > t\),
		\begin{equation}
			\label{BoundJ}
			|J(u,\xi,\alpha)-J(t,\xi,\alpha)|
			\leq I_1 + I_2
		\end{equation}
		with
		\begin{align}
			\label{QuantityI1}
			&I_1:= \ExpVal \bigg[ \int_{u}^{+ \infty} |e^{-\beta (s-u)} f(X_s^{u, \xi, \alpha}, \Prob_{X_s^{u, \xi, \alpha}}, \alpha_s)- e^{-\beta (s-t)} f(X_s^{t, \xi, \alpha}, \Prob_{X_s^{t, \xi, \alpha}}, \alpha_s) | \bigg] ds \\
			\label{QuantityI2}
			& I_2:=\ExpVal \bigg[ \int_{t}^{u} e^{-\beta (s-t)} |f(X_s^{t, \xi, \alpha}, \Prob_{X_s^{t, \xi, \alpha}}, \alpha_s)| ds \bigg].
		\end{align}
		We firstly consider \(I_2\). Under hypotheses (\nameref{HpGain}), using the estimate \eqref{InitialConditionEstimate}
		\begin{align}
			\label{BoundI2}
			I_2 &\leq K \int_{t}^{u} e^{-\beta (s-t)} \ExpVal[ 1 + |X_s^{t,\xi,\alpha}|^2 + ||\Prob_{X_s^{t,\xi,\alpha}}||_2^2 ] ds \nonumber \\
			&\leq \frac{K}{\beta}[1-e^{-\beta(u-t)}] + 2K  \bigg(|| \xi ||_{L^2}^2 + 6M^2 (u-t) \bigg)(u-t) e^{(12M^2 +1-\beta)(u-t)} < \frac{\epsilon}{2}
		\end{align}
		for \(u-t < \zeta_1(\epsilon,M,K,\beta,t,\xi)\) independent of \(\alpha\), provided that \(\beta>12M^2+1\), which is guaranteed by (\nameref{HpGain}).
		We now study \(I_1\). Notice that, for any choice of \(T>0\), we can write
		\begin{equation}
			\label{BoundI1}
			I_1 =I_T + R_T,
		\end{equation}
		where
		\[ I_T= \ExpVal \bigg[ \int_{u}^{T} |e^{-\beta (s-u)} f(X_s^{u, \xi, \alpha}, \Prob_{X_s^{u, \xi, \alpha}}, \alpha_s)- e^{-\beta (s-t)} f(X_s^{t, \xi, \alpha}, \Prob_{X_s^{t, \xi, \alpha}}, \alpha_s) | ds \bigg] \]
		and
		\[ R_T=\ExpVal \bigg[ \int_{T}^{+ \infty} |e^{-\beta (s-u)} f(X_s^{u, \xi, \alpha}, \Prob_{X_s^{u, \xi, \alpha}}, \alpha_s)- e^{-\beta (s-t)} f(X_s^{t, \xi, \alpha}, \Prob_{X_s^{t, \xi, \alpha}}, \alpha_s) | ds \bigg]. \]
		
		Consider the remainder \(R_T\). Notice that, under hypotheses (\nameref{HpGain}), using again estimate \eqref{InitialConditionEstimate}
		\begin{align}
			\label{BoundRT}
			R_T &\leq \int_{T}^{+ \infty} e^{-\beta (s-u)} (1+ 2 \ExpVal[|X_s^{u, \xi, \alpha}|^2]) ds + \int_{T}^{+ \infty} e^{-\beta (s-t)} (1+2\ExpVal[|X_s^{t, \xi, \alpha}|^2]) \nonumber \\
			&\leq \int_{T}^{+ \infty} e^{-\beta (s-u)} ds + 2\int_{T}^{+ \infty} e^{(12M^2 +1-\beta) (s-u)} \bigg(|| \xi ||_{L^2}^2 + 6M^2 (s-u) \bigg) ds \nonumber \\
			&+\int_{T}^{+ \infty} e^{-\beta (s-t)} ds + 2 \int_{T}^{+ \infty} e^{(12M^2 +1-\beta) (s-t)} \bigg(|| \xi ||_{L^2}^2 + 6M^2 (s-t) \bigg) ds \nonumber \\
			&\leq 2 e^{\beta(t+1)} \int_{T}^{+ \infty} e^{-\beta s} ds + 4 e^{\beta(t+1)} \int_{T}^{+ \infty} e^{(12M^2 +1-\beta) s} \bigg(|| \xi ||_{L^2}^2 + 6M^2 (s-t) \bigg) ds \nonumber \\ 
			&< \frac{\epsilon}{4}
		\end{align}
		provided that \(u-t \leq 1\) and \(T=T(\epsilon)\) (independent of \(\alpha\)) is sufficiently large, as all the terms appearing in the previous inequality are remainders of convergent integrals.
		As equality \eqref{BoundI1} holds for any choice of \(T\), we fix \(T=T(\epsilon)\).
		
		We now study \(I_T\). Firstly, notice that, under hypotheses (\nameref{HpContf}), the function
		\[\psi:(r,x,\mu,a) \rightarrow e^{\beta r} f(x,\mu,a)  \]
		is locally H\"older continuous in \((x,\mu)\) and Lipschitz in \(r\) with Lipschitz constant depending on \((x,\mu)\), uniformly with respect to \(a\); namely, there exists a constant \(H' >0\) such that, for any \(r,r' \in [u,T]\), \(x,x' \in \R^d\), \(\mu,\mu' \in \PPtwo\) and \(a \in A\)
		\begin{align}
			\label{Continuityofpsi}
			|\psi(r,x,\mu,a)-\psi(r',x',\mu',a)|
			\leq H' [&|x-x'|^{\gamma_1}(1+|x|+|x'|)^{2-\gamma_1} \nonumber \\
			+ &W_2(\mu,\mu')^{\gamma_2} (1+||\mu||_2+||\mu'||_2)^{2-\gamma_2} \nonumber \\
			+ &|r-r'|(1+|x'|^2+||\mu'||_2^2)].
		\end{align}
		
		By estimate \eqref{InitialConditionEstimate} restricted to the interval \([0,T]\), we can easily deduce that
		\begin{equation}
			\label{InitialConditionEstimateonuT}
			\ExpVal[| X_r^{t, \xi, \alpha}|^2] \leq \bigg(|| \xi ||_{L^2}^2 + 6M^2 T\bigg) e^{(12M^2 +1)T}:=h(T,M,\xi),
		\end{equation}
		and that the same estimate holds for \(\ExpVal[| X_r^{u, \xi, \alpha}|^2]\).
		Furthermore, by standard estimates, we can prove the following continuity condition of the solution to a McKean-Vlasov SDE with respect to the initial time on a bounded time interval: there exists a constant \(K_0(T)\) such that, for any \(0 \leq t < u\), \(\xi \in L^2(\Omega, \FF_t, \Prob)\) and \(\alpha \in \Adm\),
		\begin{equation}
			\label{EstimateDifferentInitialTimes}
			\ExpVal \bigg[ |X_s^{t, \xi, \alpha}-X_s^{u, \xi, \alpha} |^2 \bigg] \leq K_0(T) (u-t) (1 + ||\xi||_{L^2}^2)
		\end{equation}
		for all \(s \in [t,T]\).
		Therefore,
		\begin{align}
			\label{BoundIT}
			I_T 
			&= \ExpVal \bigg[  \int_{u}^{T} e^{-\beta s}|\psi(u,X_s^{u, \xi, \alpha}, \Prob_{X_s^{u, \xi, \alpha}}, \alpha_s)- \psi(t,X_s^{t, \xi, \alpha}, \Prob_{X_s^{t, \xi, \alpha}}, \alpha_s) | ds \bigg] \nonumber \\
			&\leq \ExpVal \bigg[  \int_{u}^{T} H' \bigg(|X_s^{u, \xi, \alpha}-X_s^{t, \xi, \alpha}|^{\gamma_1}(1 + |X_s^{u, \xi, \alpha}| + |X_s^{t, \xi, \alpha}|)^{2-\gamma_1} \bigg) ds \bigg] \nonumber \\
			&+ H' \ExpVal \bigg[  \int_{u}^{T} \bigg( W_2(\Prob_{X_s^{u, \xi, \alpha}}, \Prob_{X_s^{t, \xi, \alpha}})^{\gamma_2} (1+ ||\Prob_{X_s^{u, \xi, \alpha}}||_2 + ||\Prob_{X_s^{t, \xi, \alpha}}||_2)^{2-\gamma_2} \bigg) ds \bigg] \nonumber \\
			&+ H' \ExpVal \bigg[  \int_{u}^{T} (u-t) (1+|X_s^{t,\xi,\alpha}|^2+||\Prob_{X_s^{t,\xi,\alpha}}||_2^2) ds \bigg] \nonumber \quad \text{(H\"older inequality)} \nonumber \\
			& \leq \int_{u}^{T} H' \bigg(\ExpVal \bigg[ |X_s^{u, \xi, \alpha}-X_s^{t, \xi, \alpha}|^2 \bigg] \bigg)^{\frac{\gamma_1}{2}} \bigg(\ExpVal \bigg[ (1+ |X_s^{u, \xi, \alpha}|+|X_s^{t, \xi, \alpha}|)^2 \bigg] \bigg)^{\frac{2-\gamma_1}{2}} ds \nonumber \\
			&+ H' \int_{u}^{T} \bigg( \ExpVal\bigg[|X_s^{u, \xi, \alpha}-X_s^{t, \xi, \alpha}|^2 \bigg] \bigg)^{\gamma_2} (1+ \ExpVal^{\frac{1}{2}}[|X_s^{u, \xi, \alpha}|^2 ] + \ExpVal^{\frac{1}{2}}[|X_s^{t, \xi, \alpha}|^2 ])^{2-\gamma_2} ds \nonumber \\
			&+ H'(u-t)(T-u) + H'(u-t) \int_{u}^{T} 2 \ExpVal \bigg[|X_s^{t,\xi,\alpha}|^2\bigg] ds \quad \nonumber \text{(by \eqref{EstimateDifferentInitialTimes} and \eqref{InitialConditionEstimateonuT})} \nonumber \\
			&\leq H' \bigg(K_0(T)(1 + ||\xi||_{L^2}^2)(u-t)\bigg)^{\frac{\gamma_1}{2}} (3 + 6 h(T,M,\xi) )^{\frac{2-\gamma_1}{2}} (T-u) \nonumber \\
			&+ H' \bigg(K_0(T)(1 + ||\xi||_{L^2}^2)(u-t)\bigg)^{\gamma_2} (1 + 2 h(T,M,\xi)^{\frac{1}{2}})^{2-\gamma_2} (T-u) \nonumber \\
			&+H'(u-t) (T+2h(T,M,\xi)) < \frac{\epsilon}{4}
		\end{align}
		for \(u-t< \zeta_2(\epsilon, t,\xi,M,T,H')\) independent of \(\alpha\).
		
		The thesis then follows plugging \eqref{BoundRT}, \eqref{BoundIT} into \eqref{BoundI1} and then \eqref{BoundI1} and \eqref{BoundI2} into \eqref{BoundJ} with \(u \in (t, t+\zeta)\) and \(\zeta=\zeta_{\epsilon,t,\xi}=\min \{1,\zeta_1,\zeta_2\}\) independent of \(\alpha \in \Adm\).
	\end{proof}
	
	\begin{Osservazione}
		Notice that the proof can be repeated with \(f\) and \(\beta\) satisfying (\nameref{HpGainAlternative}) instead of (\nameref{HpGain}). In this case, we would have the following bounds. For the integral \(I_2\), \begin{equation}
			I_2 \leq C_f \int_{t}^{u} e^{-\beta (s-t)} ds \leq C_f (u-t) < \frac{\epsilon}{2}
		\end{equation}
		provided that \(t \leq u \leq t+{\frac{\epsilon}{2C_f}}\). For the remainder \(R_T\),
		\begin{equation}
			R_T \leq  2C_f e^{\beta (1+t)}  \int_{T}^{+ \infty} e^{-\beta s} ds < \frac{\epsilon}{4}
		\end{equation}
		provided that \(u-t< \min \{1, \frac{\epsilon}{2C_f}\}\)
		and \(T=T(\epsilon)\) is sufficiently large. Furthermore, the function \(\psi\) is locally H\"older continuous in \((x,\mu)\) and Lipschitz in \(r\) uniformly with respect to \(a\), i.e.\ there exists a constant \(H'>0\) such that
		\begin{align*}
			|\psi(r,x,\mu,a)-\psi(r',x',\mu',a)|
			\leq H' [&|x-x'|^{\gamma_1}(1+|x|+|x'|)^{2-\gamma_1} \\
			+ &W_2(\mu,\mu')^{\gamma_2}(1+||\mu||_2+||\mu'||_2)^{2-\gamma_2} + |r-r'|].
		\end{align*}
		The bound for \(I_T\) thus becomes
		\begin{align*}
			I_T
			&= \ExpVal \bigg[  \int_{u}^{T} e^{-\beta s} |\psi(u,X_s^{u, \xi, \alpha}, \Prob_{X_s^{u, \xi, \alpha}}, \alpha_s)- \psi(t,X_s^{t, \xi, \alpha}, \Prob_{X_s^{t, \xi, \alpha}}, \alpha_s) | ds \bigg] \\
			&\leq \ExpVal \bigg[  \int_{u}^{T} H' \bigg(|X_s^{u, \xi, \alpha}-X_s^{t, \xi, \alpha}|^{\gamma_1}(1 + |X_s^{u, \xi, \alpha}| + |X_s^{t, \xi, \alpha}|)^{2-\gamma_1} \bigg) ds \bigg] \nonumber \\
			&+ H' \ExpVal \bigg[  \int_{u}^{T} \bigg( W_2(\Prob_{X_s^{u, \xi, \alpha}}, \Prob_{X_s^{t, \xi, \alpha}})^{\gamma_2} (1+ ||\Prob_{X_s^{u, \xi, \alpha}}||_2 + ||\Prob_{X_s^{t, \xi, \alpha}}||_2)^{2-\gamma_2} \bigg) ds \bigg] \nonumber \\
			&+ H' \ExpVal \bigg[  \int_{u}^{T} (u-t) ds \bigg] \nonumber \, (\text{H\"older inequality and \eqref{EstimateDifferentInitialTimes} and \eqref{InitialConditionEstimateonuT}} ) \\
			&\leq \int_{u}^{T} H' \bigg(K_0(T)(1 + ||\xi||_{L^2}^2)(u-t)\bigg)^{\frac{\gamma_1}{2}} (3 + 6 h(T,M,\xi) )^{\frac{2-\gamma_1}{2}} ds \nonumber \\
			&+ \int_{u}^{T} H' \bigg(K_0(T)(1 + ||\xi||_{L^2}^2)(u-t)\bigg)^{\gamma_2} (1 + 2 h(T,M,\xi)^{\frac{1}{2}})^{2-\gamma_2} ds \nonumber \\
			&+H'T(u-t) < \frac{\epsilon}{4}
		\end{align*}
		provided that \(u-t < \zeta_3(\epsilon,T,M,\xi,t,H',K_0)\) independent of \(\alpha\). Putting the three bounds together, \(|J(u,\xi,\alpha)-J(t,\xi,\alpha)|<\epsilon\) for \(u \in (t,t+\zeta)\) and \(\zeta=\zeta_{\epsilon,t,\xi}= \min \{\zeta_3, 1, \frac{\epsilon}{2C_f}\}\) independent of \(\alpha\). \qed
	\end{Osservazione}
	
	\bibliographystyle{plain}
	\bibliography{References}
	
\end{document}